\theoremstyle{plain}
\newtheorem{theorem}{Theorem}
\newtheorem*{theorem*}{Theorem}
\newtheorem{lemma}[theorem]{Lemma}
\newtheorem{proposition}[theorem]{Proposition}
\newtheorem{corollary}[theorem]{Corollary}
\newtheorem*{claim*}{Claim}
\theoremstyle{definition}
\newtheorem{definition}[theorem]{Definition}
\newtheorem*{definition*}{Definition}
\newtheorem{algorithm}[theorem]{Algorithm}
\theoremstyle{remark}
\newtheorem{remark}[theorem]{Remark}
\newtheorem{example}{Example}
\numberwithin{equation}{section}
\begin{document}

\title{\Large An algorithm to construct one-vertex triangulations of\\
Heegaard splittings}
\author{Alexander He, James Morgan, Em K. Thompson}

\begin{abstract}
Following work of Jaco and Rubinstein (2006), which (non-constructively) proved that any 3-manifold admits a one-vertex layered triangulation,
we present an algorithm, with implementation using Regina, that uses a combinatorial presentation
of a Heegaard diagram to construct a generalised notion of a layered triangulation.
We show that work of Husz\'ar and Spreer (2019) extends to our construction:
given a genus-$g$ Heegaard splitting, our algorithm generates a triangulation with cutwidth bounded above by $4g-2$.
Beyond Heegaard splittings, our construction actually extends to a natural generalisation of Dehn fillings:
given a one-vertex triangulation with a genus-$g$ boundary component $B$, we can construct
a one-vertex triangulation of any 3-manifold obtained by filling $B$ with a handlebody.
To demonstrate the usefulness of our algorithm, we present findings from preliminary computer searches using this algorithm.
\end{abstract}

\makeshorttitle

\section{Introduction}\label{sec:intro}

Layered triangulations of 3-manifolds were introduced by Jaco and Rubinstein in 2006~\cite{JacoRubinstein2006}. The primary focus of their paper was on the construction and classification of \textit{layered solid tori}, including a classification of all normal and almost normal surfaces contained within them.
They also constructed layered handlebodies with genus $g>1$, but did not investigate these to the same extent
because the construction suffers from combinatorial explosion as genus increases.

A layered handlebody of any genus contains a single vertex, by definition (see Section~\ref{sec:JRdef}),
and can be adjusted to have any desired one-vertex boundary triangulation up to isotopy.
Meanwhile, all closed, orientable 3-manifolds admit a Heegaard splitting for some genus~\cite{moise_affine_1952}.
Using these two nontrivial facts, Jaco and Rubinstein show~\cite[pp.~91--93]{JacoRubinstein2006} that every closed 3-manifold $M$ admits
a (one-vertex) layered triangulation: that is, a triangulation of $M$ built by gluing two layered handlebodies.

A useful feature of layered triangulations is that the \emph{cutwidth} is bounded above by ${4g-2}$, where $g$ is the genus of the Heegaard splitting~\cite[Theorem~1]{Huszar-Spreer};
we will elaborate on cutwidth shortly, in Section~\ref{subsec:introComplexity}.
Unfortunately, this feature is difficult to exploit in practice because Jaco and Rubinstein do not give an explicit construction for turning a Heegaard splitting into a layered triangulation.

Our contribution is a generalised notion of layered triangulation that exhibits the same bound on cutwidth.
Unlike Jaco and Rubinstein's construction, which starts on the inside of a handlebody and builds outwards,
our construction starts on the boundary of a handlebody and builds inwards.
This allows us to give a natural topological description of our construction:
given a closed orientable surface $S$ marked with attaching circles, we fill $S$ with a handlebody $H$ so that each of the attaching circles bounds a disc in $H$.
Moreover, our construction has the advantage that it proceeds via an explicit algorithm that is implementable in software.

In principle, our algorithm is easily adapted to construct one-vertex triangulations in a variety of settings:
\begin{enumerate}[(A)]
\item\label{settingA}
As outlined above, we can fill a surface with a handlebody, and hence build a triangulation of a handlebody with a prescribed boundary triangulation.
\item\label{settingB}
Given a Heegaard splitting specified by a Heegaard diagram---that is, a closed orientable surface $S$ marked with two sets of attaching circles,
where each set specifies how to attach a handlebody on one of the sides of $S$---we can attach handlebodies to both sides of the splitting surface.
The result is a triangulation of the closed $3$-manifold corresponding to the given splitting.
\item\label{settingC}
Given any compact orientable $3$-manifold $M$ with boundary, together with a set of attaching circles in a boundary component $B$ of $M$,
we can build a triangulation of the $3$-manifold obtained by filling $B$ in accordance with the attaching circles.
The result is a natural generalisation of a Dehn filling;
such generalised fillings have been studied previously, for example by Lackenby~\cite{Lackenby_2002} and Easson~\cite{Easson2006}.
\end{enumerate}

In terms of software implementation, building a $3$-manifold triangulation in Setting~\ref{settingC} is
the most straightforward because the starting point is already $3$-dimensional.
This setting also gives a different way to build a triangulation from a Heegaard splitting:
rather than attaching two handlebodies to a surface, we can simply start with a handlebody and fill in the handlebody on the other side.
With this in mind, we present an algorithm for filling a genus-$g$ boundary component of
a one-vertex 3-manifold triangulation, which can be roughly formulated as follows:
\begin{algorithm}\label{algm:intro}
Input: a one-vertex triangulation of a compact orientable 3-manifold with a genus-$g$ boundary component, together with a set of attaching circles on the boundary.
\begin{enumerate}
    \item Layer tetrahedra onto the genus-$g$ boundary to adjust the boundary triangulation
    until all the attaching circles appears as diagonals of disjoint quadrilaterals.
    \item Perform folds to identify pairs of boundary faces in such a way that each attaching circle is made homotopically trivial.
    \item Using layering and folding, close the triangulation without introducing new topology.
    \item Terminate and return the resulting triangulation.
\end{enumerate}
\end{algorithm}

The algorithm is described more carefully in Section~\ref{sec:main-algorithm}.
It is implemented using Regina~\cite{Regina}, and is available at \url{https://github.com/AlexHe98/heegaardbuilder/};
in future, we hope to incorporate a version of our algorithm as part of Regina.
We give a preliminary demonstration of the usefulness of this algorithm by performing some computational experiments in Section~\ref{sec:experimentation}.

Currently, our implementation accepts any one-vertex triangulation of a compact orientable $3$-manifold as input;
the one-vertex restriction means that, at least for now, our implementation only works for $3$-manifolds with exactly one boundary component.
The implementation also does not cover Settings~\ref{settingA} and~\ref{settingB}, where we start with a surface instead of a $3$-manifold.
In future, we hope to extend our implementation to cover these other cases;
this would be tedious, but should otherwise be conceptually straightforward.

To our knowledge, the only preexisting software that is comparable to our algorithm is Twister~\cite{Twister},
which has been incorporated in SnapPy~\cite{SnapPy} since version 1.3.10.
One of the key features of Twister is that it converts Heegaard splittings into triangulations, but the input is described very differently from our algorithm:
for Twister, the input is a mapping class of the Heegaard surface, specified using a composition of Dehn twists and half twists.
Beyond the difference in input, our construction is also distinguished from Twister by some appealing properties that we have already mentioned:
\begin{enumerate}[(a)]
\item
Our construction generalises the layered construction of Jaco and Rubinstein~\cite{JacoRubinstein2006};
in particular, in the genus-$1$ case, our construction coincides exactly with the layered construction.
\item
The output of our algorithm is always a one-vertex triangulation whose cutwidth and treewidth are bounded above by $4g-2$, where $g$ is the genus of the input Heegaard splitting.
\end{enumerate}

There is also some related work that has not been implemented in software.
In Theorem~27 of~\cite{HuszarSpreer2018WidthFull} (the full version of~\cite{Huszar-Spreer}),
Husz\'ar and Spreer used work from Bell's thesis~\cite[Section~2.4]{Bell2015Thesis}
to give an algorithm that converts a Heegaard splitting into a layered triangulation;
thus, the output has essentially the same appealing properties as the output of our algorithm.
However, their input is similar to that used in Twister:
the Heegaard splitting is described using a mapping class, which is written as a word encoding a composition of Dehn twists.

\subsection{Measuring complexity of triangulations}\label{subsec:introComplexity}

Triangulations are an important tool for representing $3$-manifolds, particularly for the purpose of computation. In practice, one-vertex triangulations play an especially important role. One reason for this is that one-vertex triangulations typically require significantly fewer tetrahedra than less flexible triangulations like simplicial complexes; this is crucial because many $3$-manifold algorithms have running times that scale exponentially with the number of tetrahedra in the input.

In the past decade or so, there has been increasing interest in measuring the complexity of triangulations not by number of vertices or number of tetrahedra, but rather by a quantity known as \emph{treewidth}. Loosely speaking, the lower the treewidth of a triangulation, the more ``tree-like'' the dual graph of the triangulation. The interest in treewidth is driven by algorithms that are \emph{fixed-parameter tractable} in the treewidth,
meaning that the computation time is bounded above by a product of a polynomial function of the number of tetrahedra and an arbitrary function of the treewidth; this means that for any fixed treewidth, the computation time reduces to a polynomial function, and hence the algorithm becomes ``tractable''.
For examples of such fixed-parameter tractable algorithms for solving various problems in $3$-manifold topology, see~\cite{BurtonDowney2017,BLPS2016FPTMorse,BMS2018,BurtonPettersson2014FPT,BurtonSpreer2013}.

One of the main motivations for constructing layered triangulations is because of their relatively low width.
As already alluded to, Husz\'ar and Spreer~\cite[Theorem~1]{Huszar-Spreer} showed that layered triangulations have bounded \textit{cutwidth} (in terms of the genus of the Heegaard splitting), which is itself an upper bound for treewidth.
We define cutwidth and discuss complexity in more detail in Section~\ref{sec:complexity}, but for now let us state the following.

\begin{theorem}\label{thm:cutwidth-intro}
    Given a genus-$g$ Heegaard splitting of a closed, orientable $3$-manifold $M$, Algorithm~\ref{algm:intro} generates a triangulation of $M$ that is one-vertex and has cutwidth bounded above by ${4g-2}$.
\end{theorem}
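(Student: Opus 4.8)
The plan follows the strategy of Husz\'ar--Spreer~\cite{Huszar-Spreer}: bound the cutwidth of the dual graph by exhibiting a good linear order on the tetrahedra and counting boundary faces. Recall that $\operatorname{cutwidth}(\tri)$ is the minimum over all linear orders $t_1,\dots,t_N$ of the tetrahedra of the width $\max_i \#\{\,\text{faces }F=t_a\cap t_b : a\le i<b\,\}$. I would take $t_1,\dots,t_N$ to be the order in which the algorithm creates the tetrahedra, with the tetrahedra of the input handlebody triangulation $\tri_0$ listed first in a suitable order. Setting $U_i=t_1\cup\dots\cup t_i$, a face of $\tri$ crosses the cut after position $i$ precisely when it lies on the topological boundary $\partial U_i$ (here one uses that $M$ is closed, so $\partial U_i$ is exactly the interface between what has been built and what has not); hence the width of this order equals $\max_i|\partial U_i|$, the largest number of boundary triangles that ever appears during the run of the algorithm. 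So it suffices to prove the invariant $|\partial U_i|\le 4g-2$ for all $i$, together with the (easy) statement that $\tri$ is one-vertex.

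The one-vertex claim is immediate from the moves described in Section~\ref{sec:main-algorithm}: $\tri_0$ is one-vertex by hypothesis; a layering attaches a tetrahedron along two adjacent boundary faces, so all four of its vertices are identified with the pre-existing single vertex; and folds and the closing-up moves only perform further identifications. None of these introduces a new vertex.

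For the invariant I would argue phase by phase. The boundary $\partial\tri_0$ is a one-vertex triangulation of a closed orientable genus-$g$ surface, so an Euler-characteristic count ($1-E+F=2-2g$ with $2E=3F$) gives $|\partial\tri_0|=4g-2$; choosing $\tri_0$ to be a standard one-vertex triangulation of the genus-$g$ handlebody (for instance one assembled by layerings from a small core, in the manner of Jaco--Rubinstein) one checks that its tetrahedra can be ordered so that every partial boundary has at most $4g-2$ faces. In Step~1, each layering changes the boundary by a diagonal flip (a $2$--$2$ move): it deletes two triangles and creates two, so $|\partial U_i|$ stays equal to $4g-2$. In Step~2, killing each Heegaard curve strictly lowers the genus of the boundary surface, and the moves involved are layerings (which preserve the face count) and folds (which identify pairs of boundary faces and so cannot increase it); hence $|\partial U_i|\le 4g-2$ throughout, and once all $g$ curves have been processed the boundary is a $2$-sphere. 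In Step~3 we start from that sphere and only layer and fold before one final identification, so the face count never rises above its current value, which is at most $4g-2$ (and $4g-2\ge 2$ for $g\ge 1$). Assembling the phases yields $|\partial U_i|\le 4g-2$ for every $i$, hence $\operatorname{cutwidth}(\tri)\le 4g-2$.

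I expect the main obstacle to be the bookkeeping in Steps~2 and~3. One must verify that preparing the $g$ pairwise-disjoint quadrilaterals (which occupy only $2g\le 4g-2$ of the $4g-2$ boundary faces) and then folding to make each curve homotopically trivial never transiently pushes the number of boundary triangles above $4g-2$ --- this includes checking the low-genus case $g=1$ and accounting for any auxiliary layerings performed immediately before a fold --- and, likewise, that the closing-up procedure in Step~3 is built entirely from moves that do not increase the face count. A smaller point to settle is the claim about $\tri_0$: that the chosen input triangulation of the handlebody itself admits a tetrahedron order of width at most $4g-2$; if one prefers to allow an arbitrary one-vertex handlebody triangulation as input, the theorem should be understood as referring to this canonical choice.
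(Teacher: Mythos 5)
Your argument is essentially the paper's own proof: order the tetrahedra by the order in which the algorithm layers them (starting with the layered input handlebody), identify the cut after position $i$ with the boundary faces built so far, and observe that layerings preserve the count $4g-2$ while folds only decrease it. The extra care you take over the input handlebody's own ordering and the one-vertex claim matches how the paper handles these points (the latter in Theorem~\ref{thm:main-alg}), so no further changes are needed.
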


For a \textit{non-Haken} $3$-manifold $M$ with Heegaard genus equal to $g$, Huszar, Spreer and Wagner showed that any triangulation of $M$ must
have treewidth at least $\frac{1}{18}g-1$~\cite[Theorem~4]{HSW2019JoCGTreewidth}.
Combining this with Theorem~\ref{thm:cutwidth-intro}, we see that if we have a minimum genus Heegaard splitting of a non-Haken $3$-manifold $M$,
then we can use Algorithm~\ref{algm:intro} to convert this into a triangulation of $M$ whose treewidth is optimal up to a constant factor.

\subsection{Outline of paper}
Section~\ref{sec:background} covers relevant background on layered triangulations.
In Section~\ref{sec:HeegaardDiagrams}, we: \emph{(a)} formalise \textit{filling diagrams} and interpret Heegaard diagrams in this context;
\emph{(b)} describe a pinched version of a filling diagram;
and \emph{(c)} introduce the \textit{combinatorial filling diagram}, to be used as input for our main algorithm.
In Section~\ref{sec:main-algorithm}, we describe each component of the main algorithm in detail,
ultimately proving that it terminates and returns a valid triangulation of the desired 3-manifold.
We return to the discussion of triangulation complexity in Section~\ref{sec:complexity}, and prove Theorem~\ref{thm:cutwidth-intro}.
In Section~\ref{sec:implementation}, we: \emph{(a)} give instructions for using the algorithm in Regina;
\emph{(b)} comment on certain aspects of the implementation; and \emph{(c)} work through three examples of genus-$2$ Heegaard splittings.
We conclude in Section~\ref{sec:experimentation} with a summary of findings from preliminary computer searches using our algorithm.

\subsection*{Acknowledgements}
    We are extremely grateful to the MATRIX Institute and everyone involved in the
    \textit{Graduate Talks in Geometry and Topology Get-Together}, where this collaboration began.
    We acknowledge the input of Layne Hall, Connie Hui and Lecheng Su, who were involved in the project during our week at MATRIX in 2022.
    We would each like to thank our PhD supervisors Ben Burton, Jonathan Spreer,
    Stephan Tillmann and Jessica Purcell for helpful conversations and feedback along the way.
    We also thank the anonymous reviewers for their detailed feedback, which has significantly improved the readability and accessibility of this paper.
    Each author was supported by an Australian Government Research Training Program (RTP) Scholarship.

\section{Background}\label{sec:background}
In this section we recall relevant definitions and fix notation to be used throughout the paper.

\subsection{Triangulations}

A \textit{$3$-dimensional triangulation}, or \textit{$3$-triangulation} for short, is a finite collection of tetrahedra,
together with instructions for how to \textit{glue} (that is, affinely identify) some or all of their triangular faces together in pairs.
The faces that are left unglued form the \emph{boundary} of the triangulation.
We allow the tetrahedra to be flexible, since the topology of the triangulation does not depend on the shape of the tetrahedra;
in particular, we allow two faces of the same tetrahedron to be glued to each other.
Such triangulations are sometimes referred to as \textit{generalised triangulations}, \textit{semisimplicial triangulations} or \textit{pseudotriangulations}, to emphasise the fact that they are less strict than simplicial complexes.

Similarly, a \textit{$2$-dimensional triangulation}, or \textit{$2$-triangulation} for short, is a finite collection of triangles,
together with instructions for how to \textit{glue} some or all of their \emph{edges} together in pairs.
Again, we allow two edges of the same triangle to be glued to each other.
When the dimension is either unimportant or clear from context, we may use the word
\textit{triangulation} to refer to either a $3$-triangulation or a $2$-triangulation.

One of the key features of this flexible notion of triangulation is that it is possible to build a \emph{one-vertex} triangulation:
a triangulation in which, as a consequence of the face-gluings, all of the vertices are identified to become one.
Throughout this paper, all our triangulations will be one-vertex, unless stated otherwise.

In a $3$-triangulation $\mathcal{T}$, the \textit{link} of a vertex $v$ is the surface given by the frontier of a small regular neighbourhood of $v$ in $\mathcal{T}$.
The vertex $v$ is \textit{valid} if its link is either a disc (if $v$ lies in the boundary) or a closed surface (if $v$ is internal to the triangulation);
otherwise, $v$ is \textit{invalid}.

An edge is \emph{invalid} if it is identified to itself in reverse, and \emph{valid} otherwise.
In general, a 3-triangulation is \emph{valid} if all of its vertices and edges are valid, and \emph{invalid} otherwise.
However, the triangulations in this paper will never have invalid edges, so for our purposes validity of triangulations will depend entirely on the vertex links.

For a valid (one-vertex) triangulation, the underlying topological space is a 3-manifold if and only if
the vertex is \emph{real} (or \emph{material}), meaning that the link is either a disc or a 2-sphere.
In general, it is possible for a vertex to be valid but not real, though we will not encounter such cases in this paper.

However, we will frequently encounter triangulations that have invalid vertices
(specifically, such triangulations arise as intermediate triangulations constructed by our algorithm, even though the input and output triangulations are valid).
The underlying topological space of such a triangulation is not quite a $3$-manifold, so we introduce some terminology for such a space.
We define a \textit{$3$-manifold with invalid points} to be a topological space $M$ with a finite set $I$ of marked points such that $M-I$ is a $3$-manifold,
but the link of each point in $I$ is neither a closed surface nor a disc;
we call each point in $I$ an \textit{invalid point}.

There are two important operations on triangulations that are central to this paper: layering and folding.

\begin{definition}[Layering]
Let \textbf{\textit{e}} be an edge in a $2$-triangulation $B$.
Roughly, we \textit{layer over} \textbf{\textit{e}} by attaching a tetrahedron $\Delta$ to the two faces of $B$ adjacent to \textbf{\textit{e}}.

To make this precise, fix an orientation of \textbf{\textit{e}}, and let $f_1$ and $f_2$ denote the (not necessarily distinct) triangles of $B$ adjacent to \textbf{\textit{e}}.
Likewise, choose an edge $\textbf{\textit{e}}^\ast$ of $\Delta$, fix an orientation of $\textbf{\textit{e}}^\ast$, and let $f_1^\ast$ and $f_2^\ast$ denote the triangles of $\partial\Delta$ adjacent to $\textbf{\textit{e}}^\ast$.
For our purposes, we only need to define layering in the following cases:
\begin{itemize}
\item
$f_1 \neq f_2$ (see Figure~\ref{fig:layering}); or
\item
$f := f_1 = f_2$, where \textbf{\textit{e}} is given by identifying two edges of the triangle $f$ with a twist,
so that \textbf{\textit{e}} forms the internal edge of a one-triangle M\"obius band (see Figure~\ref{fig:layerOneTriangleMobius}).
\end{itemize}
The layering is given by affinely identifying $f_i$ to $f_i^\ast$, for each $i\in\{1,2\}$,
in such a way that \textbf{\textit{e}} and $\textbf{\textit{e}}^\ast$ are identified with matching orientations.
In the case where $f_1 = f_2$, this effectively causes $f_1^\ast$ and $f_2^\ast$ to be identified to each other with a twist, so that $\Delta$ becomes a one-tetrahedron solid torus.
\end{definition}

\begin{figure}[htbp]
    \centering
\begin{tikzpicture}[inner sep=0pt]
\node at (0,0) {
	\includegraphics[width=0.7\textwidth]{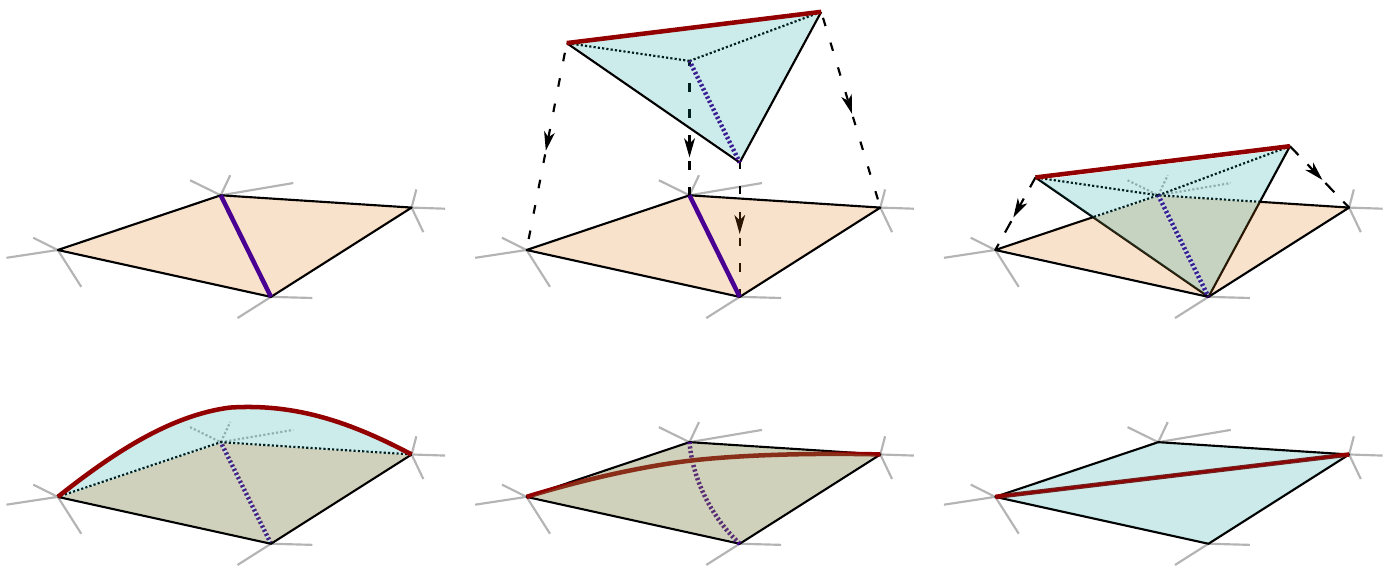}
};
\node at (-3.9,0.37) {$f_1$};
\node at (-3.15,0.375) {$f_2$};
\node at (0.05,2.25) {$\textbf{\textit{e}}'$};
\node at (0,0.345) {$\textbf{\textit{e}}$};
\node at (0.35,1.525) {$\textbf{\textit{e}}^\ast$};
\node at (3.865,-1.65) {$\textbf{\textit{e}}'$};
\end{tikzpicture}
    \caption{\textit{Layering over an edge.}
Consider an edge $\textbf{\textit{e}}$ with adjacent faces $f_1$ and $f_2$ in a $2$-triangulation $B$.
To layer over $\textbf{\textit{e}}$ means to attach a tetrahedron to $f_1$ and $f_2$ so that $\textbf{\textit{e}}$ is covered.
If $B$ is the boundary of a $3$-triangulation, then such a layering changes the boundary $2$-triangulation by replacing $\textbf{\textit{e}}$ with the edge $\textbf{\textit{e}}'$;
this change is called a \textit{flip} in the boundary $2$-triangulation.
}
    \label{fig:layering}
\end{figure}

\begin{figure}[htbp]
\centering
	\includegraphics[scale=0.75]{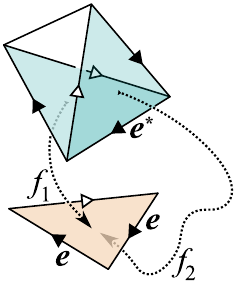}
\caption{\textit{One-tetrahedron solid torus.} Layering a tetrahedron $\Delta$ over the internal edge $\textbf{\textit{e}}$ of a one-triangle M\"obius band. One shaded face of $\Delta$ is attached to the `top' of the M\"obius triangle ($f_1$), then the second shaded face of $\Delta$ is folded around and attached to the `bottom' ($f_2$) \emph{with a twist}. All edges with matching arrows are identified together.}
\label{fig:layerOneTriangleMobius}
\end{figure}

In most applications of layering in this paper, the $2$-triangulation $B$ will be a boundary component of some $3$-triangulation $\mathcal{T}$.
In such a case, observe that layering does not change the topology of $\mathcal{T}$, but it does
adjust the boundary triangulation $B$ by \textit{flipping} the edge \textbf{\textit{e}}.

\begin{definition}[Folding]\label{def:folding}
Let $B$ be a boundary component of a $3$-triangulation $\mathcal{T}$.
Consider a quadrilateral formed by two distinct boundary faces that share a common edge \textbf{\textit{e}} in $B$.
We call \textbf{\textit{e}} the \emph{diagonal} of this quadrilateral;
we call the other diagonal, which is not an edge of $B$, the \emph{off-diagonal} of the quadrilateral.
Note that it is possible, but not necessary, for the off-diagonal to be isotopic to an edge in the interior of $\mathcal{T}$.
We may \emph{fold across} the diagonal \textbf{\textit{e}} to identify the two faces that form the quadrilateral.%
\footnote{When the endpoints of the off-diagonal are given by distinct vertices of the $3$-triangulation (which never happens for our purposes,
since our $3$-triangulations are one-vertex), folding is equivalent to a \emph{book closing move}.}
An example of this is shown in Figure~\ref{fig:folding}.
At times we might instead use the terminology of \textit{folding along $\gamma$},
where $\gamma$ is the curve given by the off-diagonal of the quadrilateral.
Observe that such a fold causes $\gamma$ to become homotopically trivial as a loop.
\end{definition}

\begin{figure}[htbp]
    \centering
\begin{tikzpicture}[inner sep=0pt]
\node at (0,0) {
	\includegraphics[width=0.8\textwidth]{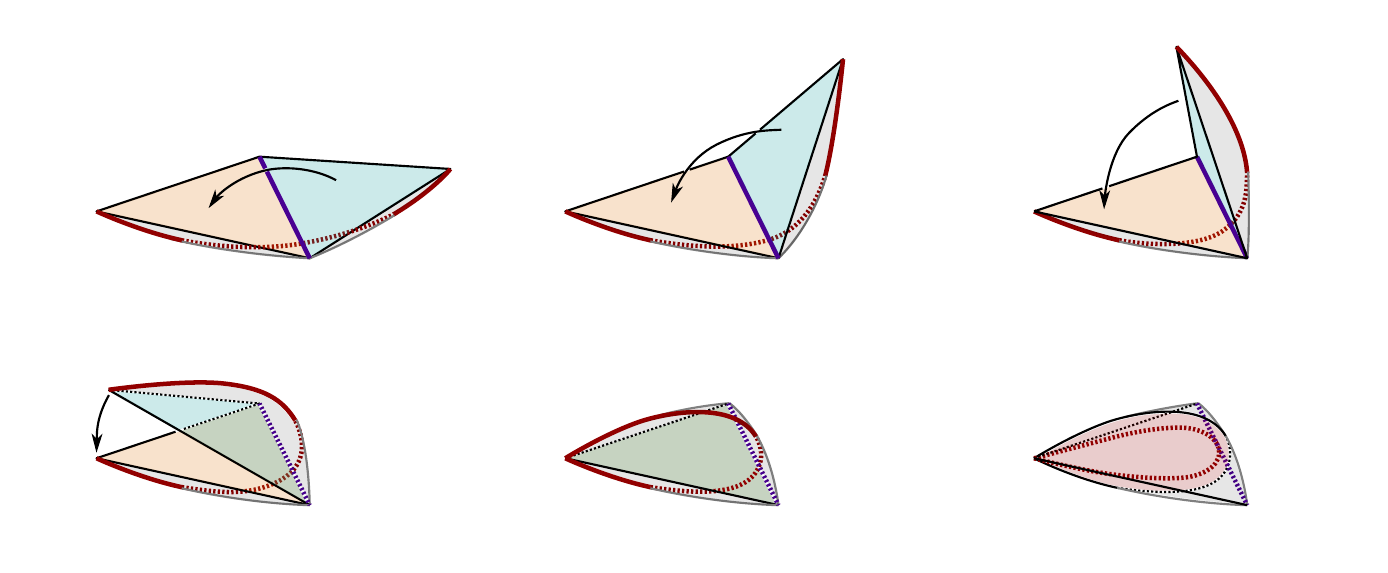}
};
\node at (-3.55,0.8) {$\textbf{\textit{e}}$};
\node at (1.325,0.95) {$\gamma$};
\end{tikzpicture}
    \caption{\textit{Folding across an edge.}
    Folding across the edge $\textbf{\textit{e}}$ identifies the two boundary faces incident to $\textbf{\textit{e}}$.
    The curve $\gamma$ in the interior of the manifold is folded back on itself and becomes homotopically trivial;
    note that $\gamma$ is not necessarily an edge of the triangulation.
    This action may also be described as folding \textit{along} $\gamma$.}
    \label{fig:folding}
\end{figure}

Folding reduces the number of boundary faces by two and, in contrast to layering, \textit{can} change the topology of a triangulation.
Indeed, a single fold can not only change the underlying 3-manifold, but it can also often cause the vertex to become invalid.
Moreover, further folding operations could turn such an invalid vertex back into a valid one.
Such scenarios are not merely hypothetical:
our algorithm necessarily creates intermediate triangulations with invalid vertices, and must subsequently make such vertices valid again.

\subsection{Layered handlebodies
}\label{sec:JRdef}
Let us outline Jaco and Rubinstein's definition of a layered handlebody of genus $g\geqslant 1$;
we will usually refer to this as the \textit{JR-construction}.
The JR-construction was first introduced for arbitrary $g\geqslant 1$ in~\cite[pp.~82--85]{JacoRubinstein2006},
though the $g=1$ case had already appeared in earlier work such as~\cite{JacoRubinstein2003};
also see~\cite[Section~1.2]{Burton2003Thesis} and the references therein for the $g=1$ case.

To start, we define a \textit{$g$-spine} to be any one-vertex triangulation
of a compact surface with one boundary component and Euler characteristic $1-g$.
For example, for $g=1$, the unique $g$-spine is the one-triangle M\"obius band, which has one vertex and two edges:
one interior edge and one boundary edge (see Figure~\ref{fig:g-spines}, left).
In general, note that for odd $g$ the $g$-spine must be a non-orientable surface, but for even $g$ it could be either orientable or non-orientable.
The $g$-spine of a layered handlebody is labelled $\mathcal{T}_{-1}$ in the JR-construction.

\begin{figure}[htbp]
    \centering
    \includegraphics[width=0.75\textwidth]{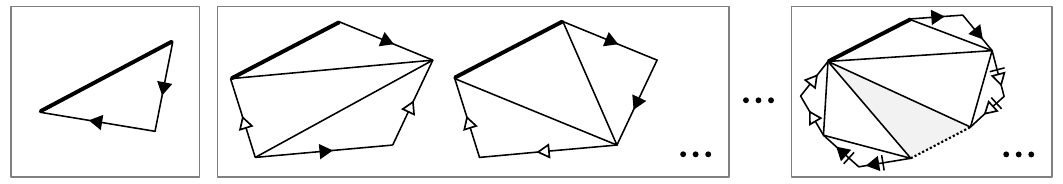}
    \caption{%
    Some examples of $g$-spines; pairs of edges with matching arrows are identified together.
    Left: The only $1$-spine is the one-triangle M\"obius band.
    Centre: There are five different 2-spines~\cite[p.~84]{JacoRubinstein2006};
    here we see the only orientable $2$-spine and one of the four non-orientable $2$-spines.
    Right: An example of a non-orientable $g$-spine for arbitrary $g>4$.}
    \label{fig:g-spines}
\end{figure}

By an Euler characteristic argument, Jaco and Rubinstein show that any $g$-spine has $3g-2$ interior edges.
They then define a \textit{minimal layered handlebody of genus $g$} to be any triangulation that is
built from $3g-2$ tetrahedra by layering over each interior edge of a $g$-spine.
This triangulation is labelled $\mathcal{T}_0$ in the JR-construction~\cite[pp.~83--84]{JacoRubinstein2006}.

Finally, for $t\geqslant 1$, a layered handlebody $\mathcal{T}_t$ of genus $g$, with $t$ layers, is defined inductively,
adding one tetrahedron at a time, by layering over a boundary edge of $\mathcal{T}_{t-1}$.

\subsubsection{Using layered handlebodies}
The following lemma, as stated by Jaco and Rubinstein, is central to the understanding of layered triangulations.
For more on the connectivity of the triangulation complex, see~\cite{Harer86, Hatcher91, Mosher94}.
\begin{lemma}[{\cite[Lemma~3.4]{JacoRubinstein2006}}]
    Any two distinct one-vertex triangulations of a closed orientable surface are related by a finite sequence of flips.
\end{lemma}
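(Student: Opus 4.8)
The plan is to reduce the statement to a known fact about the mapping class group of a closed orientable surface together with the standard combinatorial structure theory of one-vertex triangulations. First I would fix a reference one-vertex triangulation $\tri_0$ of the surface $\surface$ and recall the classical fact (essentially Harer's theorem, or equivalently the connectivity of the arc/flip complex) that any ideal triangulation of $\surface$ — equivalently, any one-vertex triangulation, after coning off the single vertex — is connected to $\tri_0$ via flips \emph{once we are allowed to act by the mapping class group}; the remaining task is to absorb the mapping-class-group ambiguity, since a flip is an intrinsic local move that does not ``see'' a marking. So the real content is: (i) the flip graph on one-vertex triangulations of $\surface$ (with no marking) is connected, and (ii) this is exactly what the lemma asserts.

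The key steps, in order, would be: (1) Pass from triangulations to their dual spines / to the associated trivalent graphs, or equivalently work with the curve-system description of a one-vertex triangulation as a collection of $6g-3$ arcs cutting $\surface$ into $4g-2$ triangles (Euler characteristic bookkeeping: $V=1$, $E=6g-3$, $F=4g-2$). (2) Invoke the contractibility, or at least connectivity, of the space of such arc systems — this is the Harer/Hatcher result on the arc complex of a punctured surface — to get that any two are related by a sequence of elementary moves, where the elementary move is precisely the diagonal exchange in a quadrilateral, i.e.\ a flip. (3) Check that the flip move in the arc-complex sense coincides with the flip move on one-vertex triangulations as defined in the paper (the $2$-$2$ move on the two triangles sharing an edge). (4) Conclude that the flip graph is connected, which is the statement.

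I would expect the main obstacle to be step (2) together with the bookkeeping needed to make sure we are genuinely in the ``one vertex'' setting rather than the ``one puncture'' setting: the cleanest available statements in the literature are phrased for ideal triangulations of punctured surfaces, and one must argue that collapsing the puncture to an honest vertex does not change the flip-connectivity — and in particular that no flip in the sequence ever produces a non-one-vertex or otherwise non-admissible intermediate triangulation. An alternative route that sidesteps the arc-complex machinery is a direct induction on genus: handle $g=1$ by hand (all one-vertex triangulations of the torus are the standard two-triangle one, related by flips via the $SL_2(\Z)$ action, which is generated by the flip-induced moves), and then for higher genus find a separating or non-separating curve realised efficiently with respect to both given triangulations, flip each triangulation into a ``standard position'' along that curve, cut, and apply the inductive hypothesis to the pieces before regluing. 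The delicate point in either approach is the same: controlling the intermediate triangulations so that every move in the constructed sequence is a legitimate flip between valid one-vertex triangulations. Since the lemma is quoted from~\cite{JacoRubinstein2006} and ultimately from Harer~\cite{Harer86}, I would in practice cite it rather than reprove it, but the above is the route I would take to reconstruct the argument.
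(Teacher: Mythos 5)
The paper does not prove this lemma at all: it is quoted verbatim as Lemma~3.4 of Jaco--Rubinstein, with a pointer back to Harer, and your closing remark that you would cite it rather than reprove it is exactly what the paper does. Your sketch (flip-connectivity of the arc complex for the once-punctured surface, transported to one-vertex triangulations of the closed surface, with the observation that flips preserve the vertex count so all intermediate triangulations remain one-vertex) is a sound outline of the standard argument, so there is no gap to flag and nothing in the paper to compare it against.
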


Since flips can be realised by layering, the boundary of a genus-$g$ handlebody can always be adjusted to match any other one-vertex triangulation.
This is how Jaco and Rubinstein argue that
any one-vertex triangulation on the boundary of a genus-$g$ handlebody can be extended to a layered triangulation of the handlebody~\cite[Corollary~9.4]{JacoRubinstein2006},
and hence that every closed orientable 3-manifold admits a layered triangulation~\cite[Theorem~10.1]{JacoRubinstein2006}.

When it comes to constructing explicit triangulations, this method begins with two pieces:
a layered handlebody of genus $g$, and the desired genus-$g$ boundary triangulation.
The task is then to match these pieces together by layering.

In the context of Dehn filling, where the desired boundary triangulation appears on a torus boundary of a 3-manifold,
Gu\'eritaud and Schleimer~\cite{GueritaudSchleimer2010} described a way to build the layered solid torus directly onto the 3-manifold boundary.
Their construction builds a complex homeomorphic to $T^2\times\left[0,1\right]$, before identifying the exposed faces at $T^2\times\{1\}$ with a fold.
This fold corresponds to the $g$-spine of the layered solid torus.

The purpose of this paper is to extend this idea to build genus-$g$ layered handlebodies directly onto the boundary of a compact orientable $3$-manifold.
In particular, we can turn a Heegaard splitting into a $3$-manifold triangulation by starting with one layered handlebody,
and attaching a second handlebody by building directly onto the boundary of the first.

Whereas Gu\'eritaud and Schleimer's construction exactly reverses the JR-construction for $g=1$, our construction for $g>1$ allows more flexibility.
In particular, letting $\Sigma_g$ denote the closed orientable surface of genus $g$,
to exactly reverse the JR-construction, the $\Sigma_g\times\left[0,1\right]$ complex
would need to have a triangulation on the $\Sigma_g\times\{1\}$ boundary with a reflective symmetry, to allow for a single fold across a central edge.
For context, in genus $2$, there are nine possible triangulations (up to orientation preserving homeomorphism), of which only five admit the appropriate symmetry.

Our construction instead focuses on ensuring the position of each 2-handle by folding to make the appropriate curve homotopically trivial.
In particular, we may perform some folds, then continue layering before performing subsequent folds.
This means that we might construct a genus-$g$ handlebody that does not contain a $g$-spine at its centre.
It is in this sense that we consider our construction to be a generalisation of the JR-construction.

Importantly, our construction also gives a valid triangulation of a handlebody, and it retains the one-vertex and bounded cutwidth properties.
These properties are discussed carefully in due course.

\subsection{Pinched manifolds}\label{ssec:pinched-manifolds}

When working with triangulations, and particularly when allowing for invalid vertices, it is common to encounter manifolds that are ``pinched'' at a vertex.
This notion is often used in an informal way, without giving a rigorous definition.
However, since pinched manifolds play a crucial role in our work, we introduce a definition here.

Let $M$ be either a closed surface or a $3$-manifold with invalid points. Assume that the link of any invalid point is a genus-$0$ surface with two or more boundary circles.
We say that $M$ contains a \textit{pinched} manifold $P$ if $P$ is immersed in $M$ such that
all self-intersections are isolated \textit{non-crossing} (defined momentarily) intersection points;
for any point $x\in M$ at which there is a self-intersection, we say that $P$ is \textit{pinched at} $x$.

The meaning of ``non-crossing'' is fairly intuitive, but a precise definition requires some care.
Fix any particular point $x\in M$ at which two or more points $p_1,\ldots,p_k\in P$ intersect, and let $U$ be a small regular neighbourhood of $x$ in $M$.
Because the self-intersection points are all isolated, we can choose $U$ to be small enough so that the pre-image $f^{-1}(f(P)\cap U)$ is
a disjoint union of ball neighbourhoods (of the appropriate dimension) $Z_1,\ldots,Z_k\subseteq P$ of $p_1,\ldots,p_k$, respectively
(in particular, $x$ is the only self-intersection point inside $U$, and hence $U$ witnesses the fact that $x$ is isolated).
For each $i\in\{1,\ldots,k\}$, the ball $Z_i$ is embedded in the neighbourhood $U$ (because $f$ is a local embedding), and $f(Z_i)$ separates $U$ into two pieces.
We say that we have a \textit{non-crossing} intersection at $x$ if for all $i,j\in\{1,\ldots,k\}$, $i\neq j$, $f(Z_j)-\{x\}$ lies entirely on one side of $f(Z_i)$ in $U$;
in particular, note that a non-crossing intersection cannot be a transverse intersection.
In the case where $x$ is not an invalid point (in other words, $U$ is a ball), observe that we can remove a non-crossing self-intersection at $x$ via a small perturbation inside $U$.
Figure~\ref{fig:nonCrossing} illustrates these ideas for the case where $P$ is $1$-dimensional and $M$ is $2$-dimensional.

\begin{figure}[htbp]
\centering
\begin{tikzpicture}
\node[inner sep=0pt] at (0,0) {
	\includegraphics[scale=0.8]{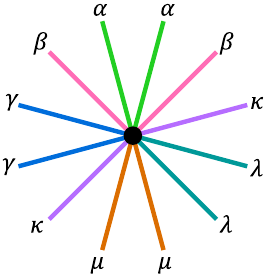}
};
\node[inner sep=0pt] at (6,0) {
	\includegraphics[scale=0.8]{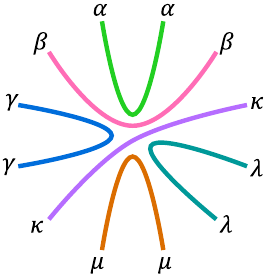}
};
\end{tikzpicture}
\caption{Left: An example of a collection of arcs that meet at an isolated non-crossing intersection point inside a small disc $U$.
Right: The result of removing the non-crossing intersection via a small perturbation inside $U$.}
\label{fig:nonCrossing}
\end{figure}

In this paper, we are mostly interested in the case where $M$ is the underlying topological space of some one-vertex triangulation $\mathcal{T}$.
In such a case, pinched manifolds arise in a natural combinatorial way via manifolds that are pinched at the vertex of $\mathcal{T}$.
For our purposes, we will usually encounter such pinched manifolds in one of the following two scenarios:
\begin{itemize}
\item
Suppose $\mathcal{T}$ is a one-vertex triangulation of a surface $S$.
In Section~\ref{ssec:pinched-diagrams}, we will see that a \emph{pinched filling diagram} is distinguished by the fact
that it is a system of circles in $S$ that is pinched at the vertex of $\mathcal{T}$.
\item
Suppose $\mathcal{T}$ is a one-vertex $3$-triangulation whose vertex $v$ has vertex link given by a genus-$0$ surface with at least two boundary components (hence, $v$ is invalid).
The boundary triangles of $\mathcal{T}$ will triangulate a closed surface that is pinched at $v$.
\end{itemize}

In the latter scenario, our algorithm must handle the following complication.
Although $\mathcal{T}$ is a one-vertex $3$-triangulation, the $2$-triangulation $B$ induced by the boundary triangles of $\mathcal{T}$ will \emph{not} be one-vertex;
indeed, the vertices of $B$ are precisely the points of $B$ that are all pinched together at $v$.
This can be seen just by considering vertex links:
the link of $v$ in $\mathcal{T}$ is a surface with at least two boundary circles, and each such circle forms the link of a vertex of $B$.
For our purposes, the key implication is that there are two types of closed curves in $\partial\mathcal{T}$ intersecting $v$:
\begin{itemize}
\item
closed curves in $B$ intersecting a vertex; and
\item
arcs in $B$ whose endpoints lie on distinct vertices of $B$.
\end{itemize}
The latter arcs give closed curves in $\partial\mathcal{T}$ because their endpoints are pinched together at $v$.
We will see that it is crucial for our main algorithm to distinguish these two types of curves.

\section{Filling diagrams}\label{sec:HeegaardDiagrams}
We begin by defining topological filling diagrams, and describing their relationship to Heegaard diagrams.
We then introduce pinched filling diagrams, and justify their equivalence to topological filling diagrams.
The remainder of this section is then dedicated to introducing \textit{combinatorial filling diagrams},
which consist of a one-vertex triangulation of a compact orientable $3$-manifold,
along with combinatorial data encoding the position of attaching circles with respect to the boundary triangulation.

\subsection{Topological filling diagrams}\label{sec:standard-diagram}
Given a closed orientable surface $\Sigma$ of genus $g$, a \emph{system of attaching circles} $\gamma=\{\gamma_1,\ldots,\gamma_g\}$ on $\Sigma$ is
a collection of $g$ embedded closed curves that are pairwise disjoint, and are non-separating as a multicurve on $\Sigma$.
Define a \emph{topological filling diagram} $D$ to be a compact orientable $3$-manifold $M$,
along with a non-empty union $B=B^1\cup\cdots\cup B^{\,k}$ of boundary components of $M$,
where each boundary component $B^{\,i}$ is marked with a system $\gamma^{\,i}$ of attaching circles;
we write $D=(M,B,\gamma)$, where $\gamma=\gamma^1\cup\cdots\cup\gamma^{\,k}$.
Occasionally, when it is necessary to make a clear distinction from attaching circles in some other context (such as Heegaard diagrams, as described in
Setting~\ref{settingB}), we instead refer to the attaching circles in a topological filling diagram as \emph{filling curves}.

For simplicity, we focus on the case where $M$ has exactly one boundary component, in which case $B=\partial M$.

Recall from Setting~\ref{settingB} that a Heegaard diagram consists of a closed orientable surface $\Sigma$ with two systems $\beta$ and $\gamma$ of attaching circles;
this specifies a closed $3$-manifold $M^\ast$ because $\beta$ and $\gamma$ specify how to glue two handlebodies together along $\Sigma$.
From such a Heegaard diagram, we can easily obtain a topological filling diagram $D$ that specifies the same $3$-manifold $M^\ast$:
simply take $D=(M,\partial M,\gamma)$, where $M$ is a handlebody obtained by filling $\Sigma$ according to $\beta$.

A topological filling diagram can be used to build the corresponding filled manifold as described in the following `folklore' algorithm.
For the case of Heegaard diagrams see, for example, the books by Rolfsen~\cite[Chapter~9]{Rolfsen} or Schultens~\cite[Chapter~6]{Schultens2014Book}.

\begin{algorithm}[Topological filling algorithm]\label{algm:standard-algorithm}
As input, take the filling diagram $D=(M,B,\gamma)$, where $M$ is a compact orientable $3$-manifold with
a genus-$g$ boundary $B$ marked with the system of attaching circles $\gamma$, as well as a genus-$g$ handlebody $H$.
\begin{enumerate}
    \item\label{step:standard-1}
    Remove a collar neighbourhood of the discs in $H$ bounded by the meridians of its handles.
    These pieces are 2-handles.
    Set $H'=H-\{\text{2-handles}\}$ and note that $H'$ is homeomorphic to a 3-ball with $2g$ marked discs.
    \item\label{step:standard-2}
    Attach the 2-handles to $M$, by identifying the meridian curves from $H$ to the attaching circles $\gamma$ in $\partial M$.
    Set $M'=M\cup \{\text{2-handles}\}$ and note that the boundary of $M$ is homeomorphic to $S^2$ with $2g$ marked discs.
    \item Glue $H'$ to the $S^2$ boundary of $M'$, ensuring that the marked discs are glued back to each other,
    as they were in $H$.
    This is always possible due to Alexander's lemma (also often known as the Alexander trick);
    see~\cite[Lemma~2.5.3]{Schultens2014Book} for a statement of this standard lemma.
    \item Return the filled $3$-manifold $M(\gamma)$.
\end{enumerate}
\end{algorithm}

\subsection{Pinched filling diagrams}\label{ssec:pinched-diagrams}
Let us introduce an adaptation of Algorithm~\ref{algm:standard-algorithm},
which is more directly comparable to the triangulation built by our combinatorial algorithm.
We refer to this as the \textit{pinched filling algorithm}.

As in Algorithm~\ref{algm:standard-algorithm}, let $H$ be a genus-$g$ handlebody.
Suppose $D^2\times\left[-1,1\right]$ is a collar neighbourhood of a properly embedded disc in $H$, with a marked point $(p,0)\in\partial D^2\times\{0\}$.
We define the corresponding \textit{wedge neighbourhood} to be a quotient of the collar neighbourhood,
found by collapsing the segment $\{p\}\times\left[-1,1\right]$ to the single point $(p,0)$.

Intuitively, we think of the wedge neighbourhoods as ``pinched'' versions of the $2$-handles that appear in
steps~\ref{step:standard-1} and~\ref{step:standard-2} of Algorithm~\ref{algm:standard-algorithm}.
That is, instead of removing collar neighbourhoods of the discs in $H$ bounded by meridians,
the pinched filling algorithm removes wedge neighbourhoods (as in Figure~\ref{fig:wedge-nbhd}) and sets $\hat{H}=H-\{\text{wedges}\}$.
Note that $\hat{H}$ is homeomorphic to a 3-ball with $2g$ marked discs, each with a marked point.

    \begin{figure}[htbp]
        \centering
        \includegraphics[width=0.8\textwidth]{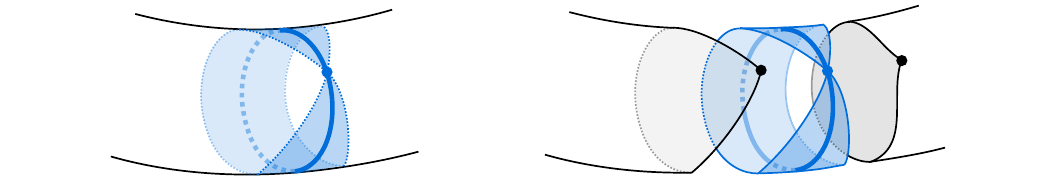}
        \caption{A wedge neighbourhood of a meridian disc.
        Removing a wedge from $H$ leaves two marked discs on $\partial H$, each with a marked point.}
        \label{fig:wedge-nbhd}
    \end{figure}

On the boundary of $M$, before attaching the wedge pieces, we isotope each attaching circle so that
they all intersect at a single point (but remain pairwise disjoint away from this common point).
This is always possible since the complement of the attaching circles in $\partial M$ is path-connected.
The result is that the system of attaching circles is no longer embedded in $\partial M$, but rather pinched at a single point (as defined in Section~\ref{ssec:pinched-manifolds}).
We denote the set of pinched attaching circles by $\hat{\gamma}$.

Although there are, up to isotopy, many different ways to pinch the attaching circles together,
recall from the definition of pinching that the intersections between the attaching circles must be \emph{non-crossing}.
This means that we can recover a unique (up to isotopy) embedded system of attaching circles via a small perturbation.
The upshot of this is that the choice of pinching will not change the final $3$-manifold that we construct.

The wedges from $H$ can be attached to the pinched attaching circles on $M$, with the point of each wedge meeting the pinched point of the attaching circle.
We set $\hat{M}=M\cup\{\text{wedges}\}$ and claim that the boundary of $\hat{M}$ is
a pinched 2-sphere (as defined in Section~\ref{ssec:pinched-manifolds});
see Proposition~\ref{prop:algm-equiv-S=P} below.

    \begin{figure}[htbp]
        \begin{subfigure}{0.45\textwidth}
            \centering
            \includegraphics[width=\linewidth]{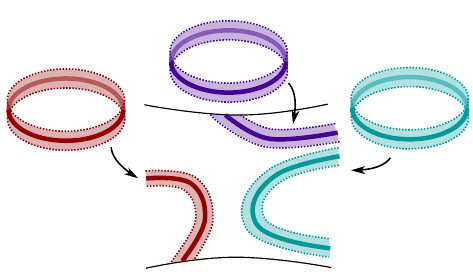}
            \caption{Attaching 2-handles to attaching circles in the standard algorithm.}
            \label{subfig:step-2-standard}
        \end{subfigure}
        \hfill
        \begin{subfigure}{0.45\textwidth}
            \centering
            \includegraphics[width=\linewidth]{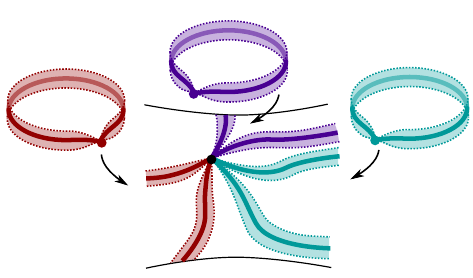}
            \caption{Attaching wedges to pinched attaching circles in the pinched algorithm.}
            \label{subfig:step-2-pinched}
        \end{subfigure}
    \caption{Comparing Step~\ref{step:standard-2} of Algorithm~\ref{algm:standard-algorithm} with
    the corresponding step in the pinched filling algorithm.}
    \label{fig:standard-vs-pinched}
    \end{figure}

To conclude, we glue $\hat{H}$ to the boundary of $\hat{M}$, ensuring that the marked discs and marked points are glued to each other, as they were in $H$.
Again, this is possible due to Alexander's lemma~\cite[Lemma~2.5.3]{Schultens2014Book}.

In summary, the pinched filling algorithm takes a pinched filling diagram
$\hat{D} = ( M, B, \hat{\gamma} )$, where the $\hat{\gamma}$ curves have been isotoped to meet at a point,
and proceeds as in Algorithm~\ref{algm:standard-algorithm}, except that it uses wedges instead of 2-handles.

\begin{proposition}\label{prop:algm-equiv-S=P}
The boundary of $\hat{M}=M\cup\{\text{wedges}\}$ is a pinched 2-sphere and
    the pinched filling algorithm builds the same 3-manifold $M(\gamma)$ as Algorithm~\ref{algm:standard-algorithm}.
\end{proposition}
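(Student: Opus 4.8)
The plan is to prove Proposition~\ref{prop:algm-equiv-S=P} in two parts, mirroring its two assertions: first that $\partial\hat V$ is a pinched 2-sphere, and second that the resulting closed manifold agrees with the one produced by Algorithm~\ref{algm:standard-algorithm}.

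\textbf{Step 1: identifying $\partial\hat V$.} First I would recall that $\partial V = \partial(H\cup\{\text{2-handles}\})$ is an honest 2-sphere with $2g$ marked disks; this is already established in Step~(\ref{step:standard-2}) of the standard algorithm, and follows from a Euler-characteristic count once one checks the result is connected and orientable (attaching a 2-handle along the non-separating curve $\gamma_i$ raises the Euler characteristic by $2$, and after all $g$ attachments $\chi$ jumps from $2-2g$ to $2$). The key observation is then that $\hat V$ is obtained from $V$ by the same collapsing operation that turns the collar neighbourhoods into wedge neighbourhoods: pinching all the $\gamma_i$ to a single point is exactly the quotient of $\partial V$ that collapses, for each handle, the arc $\{p_i\}\times[-1,1]$ in its collar. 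Equivalently, one builds $\hat V$ by attaching wedges to the already-pinched curves $\hat\gamma$, and since each wedge neighbourhood is itself the quotient of a 2-handle by an arc, $\partial\hat V$ is the corresponding quotient of $\partial V = S^2$. I would make this precise with a short homeomorphism argument: there is a quotient map $q\colon V\to\hat V$ collapsing $g$ properly embedded arcs (one per handle) that are disjoint from the marked disks, restricting on the boundary to the map $S^2\to(\text{pinched }S^2)$; since collapsing a properly embedded unknotted arc in a ball does not change the interior PL-type but does pinch the boundary, this shows $\partial\hat V$ is a 2-sphere with $2g$ marked disks and a single extra pinch point identifying $2g$ boundary points (the endpoints of the arcs), i.e.\ a pinched 2-sphere. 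Alternatively, and perhaps more cleanly, one can simply verify the claim handle-by-handle using Figure~\ref{fig:wedge-nbhd}: each wedge attachment is a boundary-pinched version of the standard 2-handle attachment.

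\textbf{Step 2: the two algorithms build the same $M$.} Here the plan is to exhibit an explicit homeomorphism between the intermediate and final objects. The pinched algorithm differs from the standard one only in (a) isotoping $\gamma$ to $\hat\gamma$ on $\partial H$ before attaching, (b) using wedges rather than 2-handles, and (c) gluing $\hat U$ to the pinched sphere rather than $U$ to the round sphere. For (a), isotoping the attaching curves in $\partial H$ does not change the homeomorphism type of $H\cup\{\text{handles}\}$ — this is standard, and I would cite the path-connectedness of $\partial H-\cup\gamma_i$ (already noted in the text) to say the isotopy exists, together with the isotopy-extension theorem to say it doesn't matter. For (b) and (c) together, I would argue that collapsing the arcs is ``undone'' by the final gluing: when we glue $\hat U$ back, the marked points on $\partial\hat U$ are identified with the pinch point of $\partial\hat V$, and because $\hat U = H'-\{\text{wedges}\}$ is itself obtained from $U=H'-\{\text{2-handles}\}$ by the same arc-collapse, the union $\hat U\cup\partial\hat V \hat V$ is the image under a global quotient map of $U\cup_{\partial} V$, where the quotient collapses $g$ properly embedded arcs in a closed 3-manifold. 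Collapsing a properly embedded arc (equivalently, an arc that together with an arc in the complement forms an unknotted circle) in a closed 3-manifold yields a homeomorphic manifold; invoking this, $\hat U\cup\hat V\cong U\cup V = M$.

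\textbf{Anticipated main obstacle.} I expect the genuinely delicate point to be making rigorous the claim that ``collapsing the wedge arcs does not change the resulting 3-manifold'' — i.e.\ that the quotient map $U\cup V\to\hat U\cup\hat V$ is a homeomorphism. The subtlety is that a priori the quotient identifies several distinct points (the $2g$ arc-endpoints on the boundary, plus the arcs' interiors) to one point, so one must verify this really is collapsing a disjoint union of cells rather than something with nontrivial topology, and that the collapse is along unknotted, unlinked arcs so that Bing-style results on collapsing cellular sets apply. I would handle this by being careful about how the wedges are attached: the point $(p_i,0)$ on each meridian disk is chosen so that the collapsing arc in $H'$ runs from a point on $\partial H'$ straight through the handle, hence is properly embedded and boundary-parallel, and after gluing these arcs remain disjoint, embedded, and each cellular; then the quotient of a 3-manifold by a finite disjoint union of cellular arcs is a homeomorphism (this is where I would cite a standard reference on cellular decompositions, or simply argue directly that a small regular neighbourhood of each arc is a ball meeting the collapse-set in a spanning arc, so the quotient is locally, and hence globally, a homeomorphism). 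The rest of the proof is bookkeeping that the marked disks and marked points are tracked correctly, which Figure~\ref{fig:standard-vs-pinched} already makes visually clear.
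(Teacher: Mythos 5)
Your proposal follows essentially the same route as the paper: both realise $\hat V$ as the image of $V$ under a collapse that leaves the closed manifold unchanged, the paper phrasing this loosely as a ``deformation retraction inside $M$'' and you phrasing it (somewhat more carefully) as a quotient by a cellular set, justified by the standard fact that collapsing unknotted, unlinked arcs in a $3$-manifold is a homeomorphism on the quotient. One step needs adjusting, and it is precisely the step the paper treats explicitly. Collapsing $g$ disjoint arcs, one per handle, produces $g$ \emph{separate} pinch points on $\partial V$, each identifying only the two endpoints of its own arc; this is the intermediate object the paper calls $V'$. The object $\hat V$ actually built by the pinched algorithm has all Heegaard curves pinched to a \emph{single} point, with every wedge attached there, so the collapse set must further identify these $g$ pinch points with one another: it is not a disjoint union of $g$ arcs but a connected tree consisting of the $g$ arcs together with $g-1$ connecting paths (the paper contracts $g-1$ paths on the boundary of $V'$ for exactly this purpose). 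As written, your claim in Step 1 that the quotient identifies ``$2g$ boundary points to a single extra pinch point'' does not follow from collapsing $g$ disjoint arcs, and the same issue recurs in the global quotient $U\cup V\to\hat U\cup\hat V$ of Step 2. The repair is immediate---a finite embedded tree in a $3$-manifold is still cellular, so your quotient argument applies verbatim to the enlarged collapse set---but the connecting paths should appear explicitly in both steps.
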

\begin{proof}
    Recall that $M'=M\cup \{\text{2-handles}\}$ is the object obtained in Step~\ref{step:standard-2}
    of Algorithm~\ref{algm:standard-algorithm}, and that its boundary is a 2-sphere.

    We show that we can deformation retract $M'$ to $\hat{M}$ inside $M(\gamma)$, without changing the topology of $M(\gamma)$;
    Figure~\ref{fig:proof-S=P} illustrates the idea of the proof in the genus $2$ case.
    Note that since the algorithm fills the 2-sphere component with a 3-ball, we may assume that $M(\gamma)-M'$ is simply connected.

    First, convert each 2-handle inside $M'$ into a wedge by contracting a line segment on the interval boundary of the 2-handle to a point.
    This brings together two points on $\partial M'$ by pinching inwards into $M'$, resulting in a pinched 2-sphere boundary.

    Note that there is a path on the pinched 2-sphere boundary connecting any two distinct wedge points.
    Contract $g-1$ such paths so that all wedge points are pulled together.
    This yields $\hat{M}$, which is the pinched version of $M'$, and its boundary is still a pinched 2-sphere.

    Since $\hat{M}$ exists inside $M(\gamma)$, the changes we made to the algorithm are valid.
    Moreover, the changes do not affect the topology of $M(\gamma)$, so the pinched version of the algorithm also builds the desired $3$-manifold.
    \qedhere

    \begin{figure}[htbp]
    \centering
        \begin{subfigure}[t]{0.49\textwidth}
            \centering
            \includegraphics[width=0.75\linewidth]{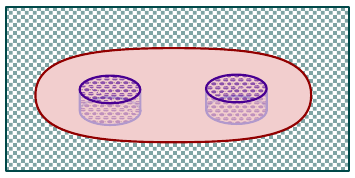}
            \caption{The output of Step~\ref{step:standard-2} in Algorithm~\ref{algm:standard-algorithm}. }
            \label{subfig:proof-S=P-a}
        \end{subfigure}
        \begin{subfigure}[t]{0.49\textwidth}
            \centering
            \includegraphics[width=0.75\linewidth]{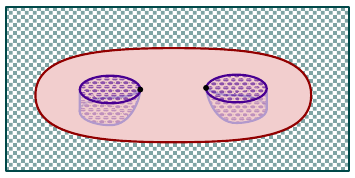}
            \caption{Pinching 2-handles to wedges through the 2-handles.}
            \label{subfig:proof-S=P-b}
        \end{subfigure}
        \newline
        \begin{subfigure}[t]{0.49\textwidth}
            \centering
            \includegraphics[width=0.75\linewidth]{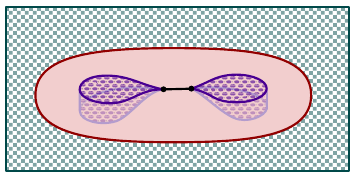}
            \caption{Contracting wedge points along a path in the boundary of the 3-ball.}
            \label{subfig:proof-S=P-c}
        \end{subfigure}
         \begin{subfigure}[t]{0.49\textwidth}
            \centering
            \includegraphics[width=0.75\linewidth]{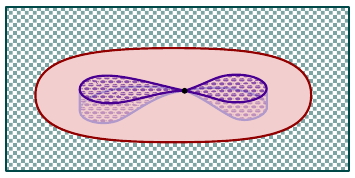}
            \caption{The pinched version of~(a), obtained by contracting paths within $M(\gamma)$.}
            \label{subfig:proof-S=P-d}
        \end{subfigure}
    \caption{An example when $\partial M$ is genus $2$, justifying the equivalence of
    Algorithm~\ref{algm:standard-algorithm} and the pinched filling algorithm. The plain red region represents $M$, which has non-trivial topology in general.
    The dotted purple regions represent the 2-handles and wedge pieces, and the checkered green region represents the 3-ball that is glued to the boundary of $M'$.}
        \label{fig:proof-S=P}
    \end{figure}
\end{proof}

\subsection{Combinatorial filling diagrams}
Here we introduce the \textit{combinatorial filling diagram},
which is used as input for our main algorithm (Algorithm~\ref{algm:main-combinatorial}).
Later, in Theorem~\ref{thm:main-alg}, we prove that the construction of a $3$-manifold from
a combinatorial filling diagram is essentially a triangulated version of the pinched filling algorithm.

Let $M$ be a compact orientable $3$-manifold whose boundary is a genus-$g$ surface $B$, as in Algorithm~\ref{algm:standard-algorithm}.
Our goal is to attach a genus-$g$ handlebody $H$ along $B$, according to a system of attaching circles.
In the combinatorial setting, we require $M$ and $H$ to be equipped with one-vertex triangulations $M_\Delta$ and $H_\Delta$, respectively.
While the triangulation $M_\Delta$ is given as part of the input, we construct $H_\Delta$ throughout Algorithm~\ref{algm:main-combinatorial} by building directly onto the boundary of $M_\Delta$;
in other words, we obtain $H_\Delta$ as the complement of $M_\Delta$ in the triangulation constructed by Algorithm~\ref{algm:main-combinatorial}.

\subsubsection{Rooted normal curves}
Since the boundary of $M_\Delta$ inherits a one-vertex triangulation,
it is helpful to isotope the attaching circles so that they intersect nicely with the edges of the boundary triangulation.
Since the attaching circles are essential, we may assume that they are in \textit{normal position} with respect to the triangulation, meaning that the curves
are disjoint from the vertex, intersect the edges transversely, and intersect the faces in \emph{normal arcs} (see Definition~\ref{def:arcs} below);
this standard fact is proven, for example, in Matveev's book~\cite[Theorem~3.2.2 and Remark~3.2.3]{Matveev2007}.
However, for input into our algorithm, we ask that the attaching circles be \textit{rooted}:
that is, we isotope the curves so that they are all pinched at the vertex of $M_\Delta$.

\begin{definition}\label{def:arcs}
    A \textit{normal arc} is a simple curve that enters a face from one edge and leaves from a different edge, thus cutting off one of the three corners. A \textit{rooted arc} is a simple curve that runs from one corner, across the face, and leaves via the opposite edge.

Consider any triangle $f$ in $\partial M_\Delta$.
Two normal or rooted arcs in $f$ have the same \textit{type} if they are related by a \textit{normal isotopy}
(an ambient isotopy that preserves every vertex, edge and triangle of $\partial M_\Delta$);
equivalently, the arcs have the same type if and only if they intersect the same edges of $f$.
Figure~\ref{fig:n-r-coords} shows four of the six possible arc types in $f$:
all three normal arc types on the left, and one of the three rooted arc types in the centre.
\end{definition}

\begin{definition}
    The number of normal and rooted arcs in a triangle $\Delta$ are encoded using \textit{(rooted normal) arc coordinates}. If $e_0,\,e_1,\,e_2$ denote the edges of $\Delta$, we let $n_i$ be the number of normal arcs opposite the edge $e_i$ and let $r_i$ be the number of rooted arcs through the edge $e_i$.
    See Figure~\ref{fig:n-r-coords}, right, for an explicit example of a system of arcs corresponding to a choice of arc coordinates.
\end{definition}

\begin{figure}[htbp]
    \centering
    \includegraphics[width=0.7\textwidth]{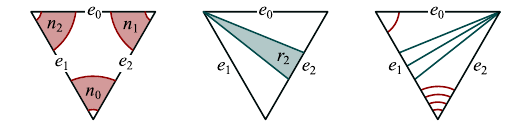}
    \caption{Left: Normal arc types and their coordinates. Centre: An example of a rooted arc type and its coordinate. Right: A system of arcs with coordinates $n_0=4$, $r_1=3$, $n_2=1$, and all other coordinates zero.}
    \label{fig:n-r-coords}
\end{figure}

\begin{definition}
An embedded closed curve $\gamma$ in a $2$-dimensional one-vertex triangulation is in \emph{rooted normal position} if either:
\begin{itemize}
\item $\gamma$ coincides with an edge of the triangulation, in which case we say that $\gamma$ is \emph{resolved}; or
\item $\gamma$ consists of exactly two rooted arcs together with some finite number (possibly zero) of normal arcs, in which case we say that $\gamma$ is \emph{unresolved}.
\end{itemize}
A collection of curves is in \emph{rooted normal position} if: \emph{(a)} each individual curve is in rooted normal position,
\emph{(b)} the intersections of the curves at the vertex are all non-crossing,
and \emph{(c)} the curves are all pairwise disjoint away from the vertex.
\end{definition}

In Proposition~\ref{prop:rooting} below, we show that on a one-vertex triangulation of a genus-$g$ surface,
we can always isotope a system of attaching circles to be in rooted normal position.
Before we prove this, it is worth commenting on why we develop this notion of rooted normal curves,
rather than using the well-studied and widely-used notion of normal curves.

\begin{remark}\label{rem:rootedMotivation}
Our main algorithm relies on the following useful property of rooted normal curves:
the number of times that such a curve intersects the (interiors of) edges of the triangulation can be reduced to zero, thereby yielding a resolved curve.
Our algorithm is then able to ignore resolved curves while it deals with all the other unresolved curves.
An earlier version of our algorithm attempted to apply a similar philosophy directly to normal curves.
This was surprisingly complicated and inelegant, and we found that rooted normal curves solved many of the problems that we had with normal curves;
in particular, our proof of Proposition~\ref{prop:PR-terminates} in Section~\ref{ssec:PR} relies crucially on the curves being rooted,
and provides a glimpse of the complications that arise if one tries to use normal curves instead.
\end{remark}

\begin{proposition}\label{prop:rooting}
    Let $B_\Delta$ be a closed orientable genus-$g$ surface equipped with a one-vertex triangulation.
    A system of attaching circles on $B_\Delta$, in normal position with respect to the triangulation, can be isotoped to be in rooted normal position.
\end{proposition}

\begin{proof}
    Recall that a valid system of attaching circles consists of $g$ non-intersecting curves that are collectively non-separating.
    By cutting $B_\Delta$ along all attaching circles, we obtain a surface homeomorphic to $S^2$ with $2g$ discs removed.
    In particular, the complement of the attaching circles in $B_\Delta$ is path-connected.
    Moreover, disjoint paths connecting each curve to the vertex can be embedded in $B_\Delta$ simultaneously.

    In fact, these paths can be positioned with respect to the triangulation in such a way that for each attaching circle $\gamma$,
    there is at least one triangle in which a path can be drawn between a normal arc belonging to $\gamma$ and one of the corners of the triangle.
To see why, suppose there is some attaching circle $\gamma$ for which this does not hold;
that is, suppose every corner of every triangle is separated from $\gamma$ by at least one arc of some other attaching circle.
Cutting $B_\Delta$ along all attaching circles other than $\gamma$ would yield a disconnected surface with at least two components:
one containing the vertex, and one containing $\gamma$.
Since a valid set of attaching circles is non-separating, this gives a contradiction.

Now, consider a triangle that admits a path between an arc of $\gamma$ and one of its corners.
Up to symmetry, there are two possible configurations of arcs in this triangle, depending on whether the path connects $\gamma$ to the corner it cuts off,
or to one of the corners across the triangle (see Figure~\ref{fig:rooting1}).
For each attaching circle $\gamma$, we choose one normal arc $\alpha$, together with a path connecting $\alpha$ to a corner of the triangle containing $\alpha$,
and then isotope along this path so that $\gamma$ becomes rooted.
Any single triangle can have attaching circles rooted to at most one of its corners;
we call this corner (if it exists) the \textit{root corner}.

    \begin{figure}[htbp]
        \centering
        \includegraphics[width=0.5\textwidth]{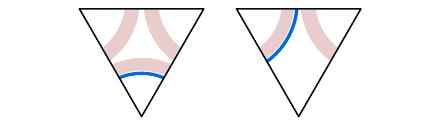}
        \caption{For each attaching circle $\gamma$ in normal position, there exists a triangle exhibiting one of these two configurations.
        In particular, there is a normal arc belonging to $\gamma$ that appears as either:
        the innermost normal arc in one of the three corners of the triangle (left);
        or the outermost normal arc in one of the three corners of a triangle where
        one of the other two corners of the triangle has no normal arcs (right).
        Any additional normal arcs must appear in the shaded regions.}
        \label{fig:rooting1}
    \end{figure}

    This process introduces arcs that are no longer normal with respect to the triangulation, as seen in Figure~\ref{fig:rooting2}.
    In particular, we see arcs that meet a corner at one end and cross an adjacent edge at the other,
    forming bigons between the edges of the triangulation and the rooted attaching circle.

    \begin{figure}[htbp]
        \centering
        \includegraphics[width=0.5\textwidth]{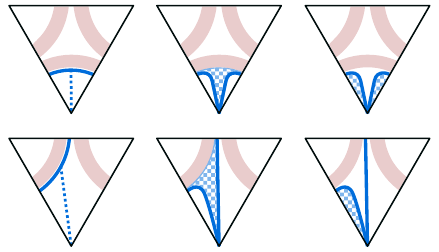}
        \caption{
In either of the cases shown in Figure~\ref{fig:rooting1}, there is a path
(shown here in each of the leftmost triangles as a dotted blue line) connecting the distinguished normal arc to the bottom corner of the triangle. Checkered regions indicate isotopy: we first isotope the arc to the bottom corner (centre), which introduces one or two bigons; then bigons are removed by isotopy across an edge (right).}
        \label{fig:rooting2}
    \end{figure}

    To remove bigons, we isotope across the edge into the adjacent triangle.
    The isotoped arc either leaves the adjacent triangle via the edge \emph{opposite} the root corner
    (as in Figure~\ref{fig:rooting3}, left),
    or it leaves via the edge \emph{adjacent} to the root corner (as in Figure~\ref{fig:rooting4}, left).
    In the first of these cases, the result is a rooted arc (as in Figure~\ref{fig:rooting3}, right).
    In the second case, we have created a new bigon (as in Figure~\ref{fig:rooting4}, right).
    When this happens, we continue isotoping the curve across edges until all such bigons are removed.
    Note that this process must terminate because the attaching circle cannot fully encircle the vertex.

\begin{figure}[htbp]
        \centering
        \begin{subfigure}{\textwidth}
            \centering
            \includegraphics[width=0.48\textwidth]{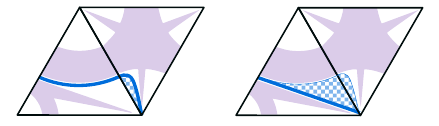}
        \caption{If the continuation of the curve leaves the adjacent triangle via
        the edge opposite the root corner, then we obtain a rooted arc.}
        \label{fig:rooting3}
        \end{subfigure}
        \newline
        \begin{subfigure}{\textwidth}
            \centering
            \includegraphics[width=0.48\textwidth]{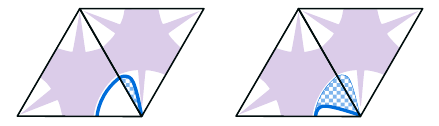}
        \caption{If the continuation of the curve leaves the adjacent triangle via the other edge meeting the root corner,
        then we have created a new bigon in this triangle and we continue recursively.
        }
        \label{fig:rooting4}
        \end{subfigure}
    \caption{The two possible scenarios that arise when we isotope a bigon across an edge into the adjacent triangle.
    The purple shaded regions indicate the possible positions of other arcs in each triangle and the checkered regions
    again indicate the region through which we isotope the arcs of interest.}
\end{figure}

    Having rooted each distinct attaching circle in one place,
    there are two ``ends'' where the attaching circle leaves and then returns to the vertex. The process of removing these bigons terminates in one of three ways.
    First, the process may terminate when both ends of the rooted attaching circle form a rooted arc in different triangles, like in Figure~\ref{subfig:rooting5a}.
    The second possibility is similar, except that both ends appear as rooted arcs in
    the same triangle, from the same corner, as in Figure~\ref{subfig:rooting5b}.
    Finally, the two ends of the rooted attaching circle may be rooted at
    two distinct corners in the same triangle, without leaving the triangle, as in Figure~\ref{subfig:rooting5c}.
    In this last case, the attaching circle is isotopic to an edge of the triangle, and is therefore resolved.

    \begin{figure}[htbp]
        \centering
        \begin{subfigure}{0.32\textwidth}
        \centering
        \includegraphics[width=0.8\textwidth]{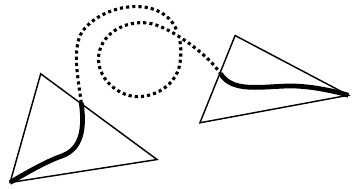}
        \caption{Each of the two ends are rooted in distinct triangles.}
        \label{subfig:rooting5a}
        \end{subfigure}
        \begin{subfigure}{0.32\textwidth}
        \centering
        \includegraphics[width=0.8\textwidth]{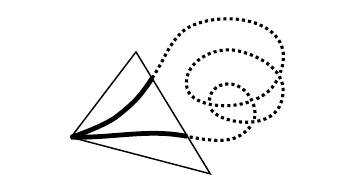}
        \caption{Both ends are rooted in the same triangle, in the same corner.}
        \label{subfig:rooting5b}
        \end{subfigure}
        \begin{subfigure}{0.32\textwidth}
        \centering
        \includegraphics[width=0.8\textwidth]{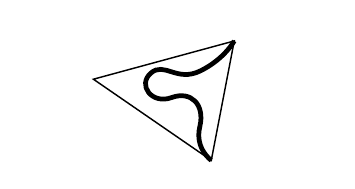}
        \caption{Both ends are rooted in the same triangle but at different corners.}
        \label{subfig:rooting5c}
        \end{subfigure}
        \caption{After removing bigons, the two ``endpoints'' of a rooted
        attaching circle may appear in one of three configurations.}
        \label{fig:rooting5}
    \end{figure}

    Arcs where no isotopy was required remain in normal position.
    Hence, at the end of this process the attaching circles are positioned such that they either
    coincide with edges of the triangulation, or they consist entirely of normal and rooted arcs.
    Thus, they are in rooted normal position, as claimed.
\end{proof}

The collection of rooted attaching circles form a bouquet, in the graph-theoretical sense, so we refer to the set of
rooted attaching circles as a \textit{filling bouquet}, and individual curves as \textit{(filling) petals}.
When a filling bouquet is understood to be valid, the petals all meet at the vertex in pairwise non-crossing intersections,
which means that we can recover a collection of disjoint embedded curves via a small perturbation of the petals (recall Section~\ref{ssec:pinched-manifolds});
thus, we will often abuse terminology and say that the petals themselves are disjoint.

\subsubsection{Describing curves via edge weights}\label{sec:weights=curves}

To efficiently encode our rooted systems of attaching circles, we count the number of intersections with each edge, and hence assign a ``weight'' or ``coordinate'' to each edge.
Such edge coordinates are well-known in the classical setting of \textit{normal curves} (i.e., curves consisting entirely of normal arcs);
for instance, see~\cite{Bell2015Thesis,EricksonNayyeri2013,SSS2002Curves,SSS2003String,SSS2008Dehn,Stefankovic2005Thesis}.
Not every vector of these edge coordinates corresponds to a normal curve, which opens the possibility of using such coordinates to encode more general objects;
indeed, some such possibilities have already been studied in contexts unrelated to this paper~\cite{GSC2021Coordinates,GSC2021Conformal}.
In this section, we explain how to use edge weights to encode our rooted normal curves;
to our knowledge, this specific application of edge weights is new to the literature.

Let $B_\Delta$ denote a one-vertex triangulation of a closed orientable surface of genus $g$,
and suppose we have a system of attaching circles in rooted normal position in $B_\Delta$.
We define the \textit{weight} of an edge $\textbf{\textit{e}}$ in $B_\Delta$ to be the number of transverse intersections between all attaching circles and
the interior of $\textbf{\textit{e}}$ (this does not include any intersections at the vertex).
An edge $\textbf{\textit{e}}$ in $B_\Delta$ has \textit{maximal weight} if there is no edge in $B_\Delta$ with strictly higher weight than $\textbf{\textit{e}}$.
The \textit{total weight} of $B_\Delta$ is the sum of all edge weights.
We use edge weights to encode the unresolved attaching circles.

Recall that a triangle $\Delta$ with edges $\textbf{\textit{e}}_0,\,\textbf{\textit{e}}_1,\,\textbf{\textit{e}}_2$ contains normal arcs $n_0,\,n_1,\,n_2$ and rooted arcs $r_0,\,r_1,\,r_2$.
Denote the weight of edge $\textbf{\textit{e}}_i$ by $w_i$.
Observe that $w_i=r_i+n_{i+1}+n_{i-1}$, where subscripts are taken to be in $\mathbb{Z}_3$.

\begin{proposition}[Edge weights determine rooted normal curves]\label{prop:weights=curves}
    There is at most one choice of rooted normal arc coordinates that is compatible with the edge weights around a face $\Delta$.

Indeed, assuming without loss of generality that $w_0 \leqslant w_1 \leqslant w_2$, precisely one of the following must hold:
\begin{enumerate}[label={(\alph*)}]
\item\label{case:a:rooted}
$w_0+w_1 < w_2$, in which case the arc coordinates in $\Delta$ must be
\[
n_0 = w_1,\quad
n_1 = w_0,\quad
n_2 = 0,\quad
r_0 = r_1 = 0,\quad
r_2 = w_2 - w_0 - w_1.
\]
\item\label{case:b:normal}
$w_0+w_1\geqslant w_2$ and $w_0+w_1+w_2$ is even, in which case the arc coordinates in $\Delta$ must be
\[
n_0 = \frac{
w_1 + w_2 - w_0
}{2},\quad
n_1 = \frac{
w_0 + w_2 - w_1
}{2},\quad
n_2 = \frac{
w_0 + w_1 - w_2
}{2},\quad
r_0=r_1=r_2=0.
\]
\item\label{case:c:incompatible}
$w_0+w_1\geqslant w_2$ and $w_0+w_1+w_2$ is odd, in which case there is no compatible choice of arc coordinates in $\Delta$.
\end{enumerate}
\end{proposition}

Before proving this proposition, we collect some useful observations and basic facts about arc coordinates and their restrictions. Throughout this section we continue to assume that subscripts are in $\mathbb{Z}_3$.

\begin{lemma}[Rooted arc constraints]
    Only one of the three rooted arc coordinates may be positive in a given triangle,
    and the normal coordinate corresponding to the positive root coordinate must be 0.
    That is, if $r_i>0$ then we must have $n_i=0$ and $r_{i\pm 1}=0$, as illustrated in Figure~\ref{fig:fundamental-observation}.
\end{lemma}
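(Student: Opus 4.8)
The plan is to argue entirely inside a single abstract $2$-simplex $\Delta$ with vertices $v_0,v_1,v_2$ and edge $e_i$ opposite $v_i$ (indices mod $3$), using only one structural fact: the normal and rooted arcs occurring in $\Delta$ are \emph{properly embedded and pairwise disjoint}. Indeed, each such arc is a sub-arc of one of the Heegaard petals; the petals are embedded and meet one another only at the vertex; and at the vertex the definition of rooted normal position forbids distinct rooted-arc germs from crossing. So, after a small perturbation near the three corners, we may regard all arcs in $\Delta$ as honestly disjoint, each being either a normal arc (both endpoints in edge interiors, cutting off a corner) or a rooted arc, where an arc counted by $r_i$ runs from $v_i$ to an interior point of the opposite edge $e_i$. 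The only elementary observation used repeatedly is: if $a$ is a properly embedded arc cutting $\Delta$ into pieces $\Delta'$ and $\Delta''$, then any arc with one endpoint on $\partial\Delta'\setminus a$ and the other on $\partial\Delta''\setminus a$ must cross $a$.

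First I would show $r_i>0 \Rightarrow r_{i-1}=r_{i+1}=0$. Fix a rooted arc $a$ witnessing $r_i>0$; it runs from $v_i$ to a point $p$ in the interior of $e_i=v_{i-1}v_{i+1}$, and cuts $\Delta$ into the sub-triangles $v_iv_{i+1}p$ and $v_iv_{i-1}p$. If $r_{i+1}>0$, take a rooted arc $b$ from $v_{i+1}$ to an interior point of $e_{i+1}=v_iv_{i-1}$. Then $v_{i+1}$ is a corner of the first sub-triangle, while $e_{i+1}$ is an entire edge of the second, so $b$ must cross $a$ — contradicting disjointness. The case $r_{i-1}>0$ is identical after interchanging $v_{i-1}$ and $v_{i+1}$. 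Hence at most one rooted coordinate of $\Delta$ is nonzero; in particular this makes the \emph{root corner} $v_i$ of $\Delta$ well defined whenever $\Delta$ carries a rooted arc.

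Finally I would show $r_i>0 \Rightarrow n_i=0$, i.e.\ the normal coordinate at the root corner vanishes. A normal arc counted by $n_i$ is a properly embedded arc $c$ with both endpoints on $e_{i-1}\cup e_{i+1}$ cutting off the corner $v_i$, hence separating $v_i$ from the whole opposite edge $e_i$. Since the rooted arc $a$ witnessing $r_i>0$ joins $v_i$ to an interior point of $e_i$, it must cross $c$ — again contradicting disjointness. Combining the two claims yields exactly the statement: if $r_i>0$ then $n_i=0$ and $r_{i\pm1}=0$.

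I do not expect a serious obstacle; the argument is a handful of planar separation statements. The one point needing care is the reduction in the first paragraph: one must make sure that the way the petals come together at the single vertex of the boundary triangulation does not introduce hidden intersections among the arcs inside $\Delta$, and this is precisely what the "no two curves intersect transversely at the vertex" clause in the definition of rooted normal position guarantees.
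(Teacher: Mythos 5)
Your argument is correct and matches the paper's intended reasoning: the paper gives no written proof of this lemma beyond Figure~\ref{fig:fundamental-observation}, which encodes exactly the disjointness-and-separation facts you have spelled out (a rooted arc at $v_i$ separates the other two corners' access to their opposite edges, and a normal arc at the root corner separates $v_i$ from $e_i$). Your extra care about identifications at the single vertex is a welcome precision, but the approach is the same.
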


\begin{figure}[htbp]
        \centering
        \includegraphics[width=0.72\textwidth]{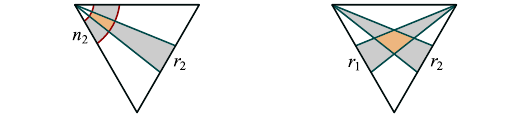}
        \caption{When a rooted arc is present, meaning $r_i>0$, then there can be no normal arcs in that corner, so $n_i=0$, and the other rooted arc coordinates $r_{i\pm 1}$ must also be 0.}
        \label{fig:fundamental-observation}
    \end{figure}

    \begin{lemma}\label{lem:roots-and-weights}
        If $r_i>0$, then $w_i>w_{i+1}+w_{i-1}$.
    \end{lemma}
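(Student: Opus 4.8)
The plan is to reduce the claimed inequality to a one-line computation, using the weight identity $w_i = r_i + n_{i+1} + n_{i-1}$ (subscripts in $\mathbb{Z}_3$) together with the Fundamental Observation. Suppose $r_i > 0$. The Fundamental Observation then forces $n_i = 0$ and $r_{i+1} = r_{i-1} = 0$. Feeding these vanishing coordinates into the weight identity for the other two edges gives $w_{i+1} = r_{i+1} + n_{i+2} + n_i = n_{i-1}$ and $w_{i-1} = r_{i-1} + n_i + n_{i-2} = n_{i+1}$, so that $w_{i+1} + w_{i-1} = n_{i+1} + n_{i-1}$.

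Substituting this back into the identity for $w_i$ finishes the argument: $w_i = r_i + n_{i+1} + n_{i-1} = r_i + (w_{i+1} + w_{i-1})$, and since $r_i \geq 1$ we conclude $w_i > w_{i+1} + w_{i-1}$.

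There is no real obstacle here; the only thing demanding attention is the cyclic bookkeeping. One must keep track of the fact that the normal arcs counted by $n_j$ cross the two edges $e_{j-1}$ and $e_{j+1}$ but not $e_j$, while the rooted arcs counted by $r_j$ cross only $e_j$; once these contributions are attributed to the correct edges, the computation above is forced. It is also worth noting that strictness of the inequality is exactly the statement that $r_i$ is a positive integer, so this lemma is best viewed as a clean algebraic consequence of the Fundamental Observation rather than as requiring any further geometric input.
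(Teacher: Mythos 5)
Your proof is correct and follows essentially the same route as the paper: the paper's one-line argument $w_i = r_i + n_{i-1} + n_{i+1} > n_{i-1} + n_{i+1} = w_{i+1} + w_{i-1}$ relies on exactly the identifications $w_{i+1} = n_{i-1}$ and $w_{i-1} = n_{i+1}$ that you derive explicitly from the Fundamental Observation (the paper reads them off a figure instead). Your version simply spells out the cyclic bookkeeping that the paper leaves implicit.
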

    \begin{proof}
        With reference to Figure~\ref{fig:roots-and-weights}, we have
        \[ r_i>0 \implies w_i=r_i+n_{i-1}+n_{i+1}>n_{i-1}+n_{i+1}=w_{i+1}+w_{i-1}.\qedhere\]

    \begin{figure}[htbp]
        \centering
        \includegraphics[width=0.72\textwidth]{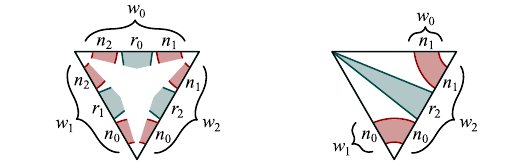}
        \caption{Left: In general, $w_i=n_{i-1}+r_i+n_{i+1}$. Right: If the root coordinate $r_i$ is positive then $w_{i-1}=n_{i+1}$, $w_{i+1}=n_{i-1}$, and $w_i=w_{i+1}+r_i+w_{i-1}$.}
        \label{fig:roots-and-weights}
    \end{figure}
    \end{proof}

    \begin{corollary}\label{cor:max-weight-root}
        If edge $e_i$ has maximal weight, then the only rooted arc coordinate that can possibly be positive is $r_i$.
    \end{corollary}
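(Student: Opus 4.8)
The plan is to deduce this directly from Lemma~\ref{lem:roots-and-weights} by contradiction. First I would fix the face $\Delta$ with edges $e_0, e_1, e_2$ and weights $w_0, w_1, w_2$, and suppose that $e_i$ is a maximal weight edge (so in particular $w_i \geq w_{i+1}$ and $w_i \geq w_{i-1}$), yet $r_j > 0$ for some index $j \neq i$. The next step is a small piece of index bookkeeping: since all subscripts lie in $\mathbb{Z}_3$, the two indices $j-1$ and $j+1$ are precisely the two elements of $\mathbb{Z}_3$ other than $j$, so one of them equals $i$ and the other is the remaining index, say $k$.

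Then I would invoke Lemma~\ref{lem:roots-and-weights} with the positive coordinate $r_j$ to obtain $w_j > w_{j-1} + w_{j+1} = w_i + w_k$. Since every edge weight counts transverse intersection points, $w_k \geq 0$, and hence $w_j > w_i$. This contradicts the maximality of $w_i$. Therefore $r_j = 0$ for every $j \neq i$, so the only rooted arc coordinate in $\Delta$ that can possibly be positive is $r_i$, as claimed.

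I do not expect any genuine obstacle here: the corollary is an immediate consequence of Lemma~\ref{lem:roots-and-weights} together with the (trivial) non-negativity of edge weights. The only point requiring minor care is the cyclic index argument — namely that for $j \neq i$ one necessarily has $i \in \{j-1, j+1\}$, so that the right-hand side $w_{j-1} + w_{j+1}$ of the lemma's inequality genuinely contains the term $w_i$ and the contradiction with maximality goes through.
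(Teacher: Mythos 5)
Your argument is correct and is exactly the one the paper intends: the corollary is stated without proof as an immediate consequence of Lemma~\ref{lem:roots-and-weights}, and your contradiction (if $r_j>0$ for $j\neq i$ then $w_j>w_{j-1}+w_{j+1}\geq w_i$, contradicting maximality) is the same reasoning the paper uses explicitly in the proof of Lemma~\ref{lem:r0=r1=zero}. No issues.
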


    \begin{lemma}\label{lem:r0=r1=zero}
    If $w_0\leqslant w_1\leqslant w_2$, then $r_0=r_1=0$.
    \end{lemma}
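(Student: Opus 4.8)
This statement is essentially immediate from the tools assembled just before it, so the "proof" is really a short deduction; the only thing to get right is which edge to point the earlier results at. The plan is to exploit the hypothesis $w_0 \leqslant w_1 \leqslant w_2$ to identify $e_2$ as a maximal weight edge, and then quote Corollary~\ref{cor:max-weight-root} directly: if $e_2$ has maximal weight, the only rooted arc coordinate that can be positive is $r_2$, hence $r_0 = r_1 = 0$. (If there are ties, e.g.\ $w_1 = w_2$, this does no harm: $e_2$ is still a maximal weight edge, so the conclusion stands; one could additionally apply the corollary to $e_1$, but it is not needed.)

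\textbf{Alternative, self-contained route.} If one prefers not to route through the corollary, the same conclusion follows directly from Lemma~\ref{lem:roots-and-weights} together with non-negativity of edge weights. Suppose $r_0 > 0$; then Lemma~\ref{lem:roots-and-weights} gives $w_0 > w_1 + w_2$, but $w_1 + w_2 \geqslant w_1 \geqslant w_0$ since all weights are non-negative, a contradiction. Likewise, if $r_1 > 0$ then $w_1 > w_0 + w_2 \geqslant w_2 \geqslant w_1$, again a contradiction. Hence $r_0 = r_1 = 0$. I would probably present this version, since it keeps the argument local to the inequalities and does not even need the Fundamental observation.

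\textbf{Main obstacle.} There is no real obstacle here: the lemma is a bookkeeping consequence of $w_i = r_i + n_{i+1} + n_{i-1}$ and the fact that a positive root coordinate forces the other two weights to be pure normal-arc counts (Figure~\ref{fig:roots-and-weights}). The only point worth stating carefully is that the argument uses $w_i \geqslant 0$ for all $i$, which is automatic since each $w_i$ counts transverse intersection points. It is worth flagging explicitly (for use in later lemmas) that the argument does \emph{not} rule out $r_2 > 0$, and indeed $r_2$ may well be positive when $w_2$ strictly dominates $w_0 + w_1$.
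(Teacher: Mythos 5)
Your ``alternative, self-contained route'' is exactly the paper's proof: assume $r_i>0$ for $i\in\{0,1\}$, apply Lemma~\ref{lem:roots-and-weights}, and contradict the ordering $w_0\leqslant w_1\leqslant w_2$. The first route via Corollary~\ref{cor:max-weight-root} is also fine in spirit, though note the paper defines ``maximal weight'' globally over all boundary edges, so the corollary as literally stated does not quite apply to an edge that is merely maximal within its own triangle; since you say you would present the second version, this is moot.
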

    \begin{proof}
        If $r_0>0$ then $w_0> w_1+w_2$ by Lemma~\ref{lem:roots-and-weights},
        but this implies that $w_0>w_2$, which is a contradiction to our assumed ordering.
        A similar argument applies to $r_1$.
    \end{proof}

    \begin{lemma}\label{lem:r2-positive}
        If $w_0\leqslant w_1\leqslant w_2$, then  $r_2\geqslant w_2-w_1-w_0$.
    \end{lemma}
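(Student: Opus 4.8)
The plan is to reduce the inequality immediately to a statement about two manifestly non-negative quantities. Since $w_0\leqslant w_1\leqslant w_2$, Lemma~\ref{lem:r0=r1=zero} already gives $r_0=r_1=0$, so the weight identity $w_i=r_i+n_{i+1}+n_{i-1}$ (subscripts in $\mathbb{Z}_3$) specialises to
\[
w_0=n_1+n_2,\qquad w_1=n_2+n_0,\qquad w_2=r_2+n_0+n_1.
\]

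From here I would just compute the relevant combination directly. Subtracting the first two identities from the third gives
\[
w_2-w_1-w_0=(r_2+n_0+n_1)-(n_2+n_0)-(n_1+n_2)=r_2-2n_2.
\]
Since $n_2\geqslant 0$, this rearranges to $r_2=(w_2-w_1-w_0)+2n_2\geqslant w_2-w_1-w_0$, which is precisely the claimed bound.

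There is no genuine obstacle here: beyond arithmetic, the only inputs are the preceding lemma (which kills $r_0$ and $r_1$) and the non-negativity of arc coordinates. The single point I would be careful to state in the write-up is that we are working with an honest arc-coordinate vector for the triangle $\Delta$, so that all $n_i,r_i\geqslant 0$; this is guaranteed once the Heegaard petals are in rooted normal position, and a resolved petal (one coinciding with an edge) contributes nothing to the coordinates of $\Delta$ and hence does not affect the computation.
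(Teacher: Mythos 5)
Your proof is correct and is essentially the paper's own argument: both derive the identity $r_2 = (w_2-w_1-w_0)+2n_2$ from the weight relations with $r_0=r_1=0$ and conclude by non-negativity of $n_2$. (If anything, your citation of Lemma~\ref{lem:r0=r1=zero} for $r_0=r_1=0$ is the cleaner justification; the paper attributes this step to the fundamental observation.)
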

    \begin{proof}
        By definition, $r_2=w_2-n_0-n_1$. By the rooted arc constraints, we have $w_1=n_0+n_2$ and $w_0=n_1+n_2$, so this becomes $r_2=w_2+(n_2-w_1)+(n_2-w_0)$. Then by expanding we see that $r_2=w_2+2n_2-w_1-w_0\geqslant w_2-w_1-w_0$.
    \end{proof}

\begin{proof}[Proof of Proposition~\ref{prop:weights=curves}]
Assume, without loss of generality, that ${w_0\leqslant w_1\leqslant w_2}$. Hence, $r_0=r_1=0$ (by Lemma~\ref{lem:r0=r1=zero}) and it remains to be shown which values $r_2,\ n_0,\ n_1$ and $n_2$ can take.
For this, we first note that all possible sets of edge weights fall into one of the following cases, corresponding precisely to the three cases in the proposition statement:
\begin{description}
\item[\ref{case:a:rooted}]
$w_0+w_1 < w_2$,
\item[\ref{case:b:normal}]
$w_0+w_1\geqslant w_2$ and $w_0+w_1+w_2$ is even,
\item[\ref{case:c:incompatible}]
$w_0+w_1\geqslant w_2$ and $w_0+w_1+w_2$ is odd.
\end{description}
In each of these cases, we need to explain why the possible arc coordinates are completely determined by the edge weights.
Specifically, we will show that: in~\ref{case:a:rooted}, there must be at least one rooted arc;
in~\ref{case:b:normal}, the only choice of compatible arc coordinates consists entirely of normal arcs;
and in~\ref{case:c:incompatible}, there is no realisation of the edge weights as compatible rooted and normal arcs.

First consider when $w_0+w_1 < w_2$ (Case~\ref{case:a:rooted}). We have
    \begin{align*}
        w_2-w_1-w_0>0&\implies r_2>0 & \text{(Lemma~\ref{lem:r2-positive})}\\
        &\implies n_2=0 & \text{(Rooted arc constraints)}\\
        &\implies n_0=w_1,\ n_1=w_0 \text{ and } r_2=w_2-n_0-n_1 & \text{(see Figure~\ref{fig:roots-and-weights})}\\
        &\implies r_2=w_2-w_1-w_0. &
    \end{align*}

    Thus, when $w_0+w_1 < w_2$, the rooted normal arc coordinates may be expressed in terms of edge weights as follows:
    \[
    n_0 = w_1,\quad
    n_1 = w_0,\quad
    n_2 = 0,\quad
    r_0 = r_1 = 0,\quad
    r_2 = w_2 - w_0 - w_1.
    \]
This completes the proof for~\ref{case:a:rooted}.

    Now suppose $w_0+w_1\geqslant w_2$ (cases~\ref{case:b:normal} and~\ref{case:c:incompatible}). Recall, by Lemma~\ref{lem:r0=r1=zero}, that $r_0=r_1=0$. Note that $r_2$ must also be 0, since otherwise our inequality would contradict Lemma~\ref{lem:roots-and-weights}. When there are no rooted arcs, as in Figure~\ref{fig:n-r-coords} (left), we see that
    \[ w_0+w_1+w_2=2(n_0+n_1+n_2)\quad \implies \quad w_0+w_1+w_2 \text{ is even.}\]
    In particular, Case~\ref{case:c:incompatible} is incompatible with any valid set of rooted normal arc coordinates.

    When $w_0+w_1+w_2$ is even (Case~\ref{case:b:normal}), we can turn the edge weights into normal arc coordinates in the usual way.
    In detail, for each $i\in\mathbb{Z}_3$ we have
    \[ w_{i}+w_{i+1}+w_{i-1}=2(n_{i}+n_{i+1}+n_{i-1})=2n_{i}+2w_{i}, \quad\text{as in Figure~\ref{fig:n-r-coords} (left).}\]
    This implies that $n_i=\frac{1}{2}(w_{i+1}+w_{i-1}-w_{i})$.
    Thus, in Case~\ref{case:b:normal}, the rooted normal arc coordinates are expressed in terms of edge weights as follows:
    \[
    n_0 = \frac{
    w_1 + w_2 - w_0
    }{2},\quad
    n_1 = \frac{
    w_0 + w_2 - w_1
    }{2},\quad
    n_2 = \frac{
    w_0 + w_1 - w_2
    }{2},\quad
    r_0=r_1=r_2=0.
    \]
This completes the proof of the proposition.
\end{proof}

\begin{remark}\label{rmk:invalid-input}
In general, even if a set of edge weights is compatible with some choice of
rooted normal arc coordinates, the corresponding collection of curves need not form a filling bouquet:
there might be curves formed entirely by normal arcs, which are therefore not rooted;
and there might also be a pair of rooted curves whose intersection (at the vertex) fails the non-crossing condition.
We use the term \textit{valid} to describe a set of edge weights or a collection of filling petals that avoids these features.
\end{remark}

\subsubsection{Defining combinatorial filling diagrams}
Now that we have established the fact that attaching circles can be isotoped to rooted normal position, and that a valid set of edge weights uniquely
determines a collection of attaching circles, we are ready to define a combinatorial filling diagram.
Although our definition is restricted to the case where the input manifold has only one boundary component, this is not necessary in principle.

\begin{definition}[Combinatorial filling diagram]\label{def:combinFilling}
    Let $\mathcal{T}$ be a one-vertex triangulation of a compact orientable $3$-manifold with a single genus-$g$ boundary.
    Let $\mathcal{R}$ be a (possibly empty) set of edges in $\mathcal{T}$ corresponding to resolved attaching circles.
    Let $\mathcal{W}$ be the tuple of edge weights encoding the unresolved attaching circles.
    We call $(\mathcal{T},\mathcal{R},\mathcal{W})$ a \textit{combinatorial filling diagram}.
\end{definition}

Since $\mathcal{T}$ is assumed to be a one-vertex $3$-manifold triangulation in Definition~\ref{def:combinFilling}, we must have genus $g\geqslant1$
(otherwise the boundary of $\mathcal{T}$ would be a one-vertex triangulation of the $2$-sphere, which is impossible).

\begin{proposition}[Equivalence]\label{prop:diag-equiv-S=C}
    A topological filling diagram $(M, B,\gamma)$ can be converted to
    a combinatorial filling diagram $(\mathcal{T,R,W})$, and vice versa.
\end{proposition}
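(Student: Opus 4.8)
The plan is to exhibit the two conversions separately, each time assembling already-established pieces, and then note that they are mutually consistent at the level of the manifold $M$ (the precise comparison being the content of Theorem~\ref{thm:main-alg}).

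\textbf{From standard to combinatorial.} Starting from $(\partial H,\gamma,\beta)$, first choose \emph{any} one-vertex triangulation $\mathcal{T}$ of the handlebody $H$; such a triangulation exists, for instance a layered handlebody produced by the JR-construction~\cite{JacoRubinstein2006}. Put $\gamma$ in normal position with respect to $\partial\mathcal{T}$ and then, by Proposition~\ref{prop:rooting}, isotope it into rooted normal position. Let $\mathcal{R}$ be the list of edges of $\mathcal{T}$ that coincide with the resolved petals, and let $\mathcal{W}$ be the tuple of boundary edge weights recording the unresolved petals. Then $(\mathcal{T},\mathcal{R},\mathcal{W})$ is a combinatorial Heegaard diagram, and it is valid in the sense of Remark~\ref{rmk:invalid-input} precisely because $\gamma$ was a genuine system of attaching circles (disjoint, embedded, collectively non-separating). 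The point to emphasise here is that the curve system $\beta$ is \emph{not} recorded as separate combinatorial data: it is subsumed into the handlebody $\mathcal{T}$ itself, whose own meridian disks play the role that the disks bounded by $\beta$ played for the second handlebody in Algorithm~\ref{algm:standard-algorithm}.

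\textbf{From combinatorial to standard.} Given a valid combinatorial Heegaard diagram $(\mathcal{T},\mathcal{R},\mathcal{W})$, set $H=|\mathcal{T}|$ and $\partial H=\partial\mathcal{T}$. Reconstruct the Heegaard petals as follows: each edge listed in $\mathcal{R}$ is a resolved petal, and the unresolved petals are recovered from $\mathcal{W}$ by applying Proposition~\ref{prop:weights=curves} to each face to obtain the (unique) compatible rooted normal arc coordinates, and then gluing the resulting normal and rooted arcs across shared edges. Validity of $(\mathcal{R},\mathcal{W})$ guarantees that these arcs assemble into $g$ embedded loops meeting only at the vertex, whose complement in $\partial H$ is connected; a small isotopy near the vertex separates the loops, producing a system $\gamma$ of attaching circles. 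Finally, pick any complete system of compressing disks for the handlebody $H$ and let $\beta$ be the collection of their boundary curves. Then $(\partial H,\gamma,\beta)$ is a standard Heegaard diagram. Neither conversion is canonical — the first depends on the choice of $\mathcal{T}$, the second on the choice of $\beta$ — but one checks that applying one conversion and then the other returns the same closed $3$-manifold $M$, since the resolved/unresolved distinction is only bookkeeping about curve position and the meridian disks of $\mathcal{T}$ fill in exactly the region supplied by the $\beta$-handlebody.

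\textbf{Main obstacle.} I expect the delicate step to be the reverse direction. Proposition~\ref{prop:weights=curves} only pins down the arc coordinates \emph{locally}, inside a single triangle, so some care is needed to argue that these locally-determined arcs match up consistently along every edge (the number of transverse intersections prescribed on each side agrees, by construction of the edge weights) and, crucially, that the validity hypothesis genuinely excludes the degenerate configurations flagged in Remark~\ref{rmk:invalid-input}: components built entirely from normal arcs (hence not rooted) and rooted petals that cross transversely at the vertex. A secondary, more expository point is to state clearly in the forward direction that $\beta$ is encoded implicitly by the handlebody structure of $\mathcal{T}$, so that no information is lost despite the combinatorial diagram carrying only the triple $(\mathcal{T},\mathcal{R},\mathcal{W})$.
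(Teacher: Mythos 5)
Your proposal is correct and follows essentially the same route as the paper: endow $H$ with a one-vertex triangulation (e.g.\ via the JR-construction), root the curves using Proposition~\ref{prop:rooting}, and read off $(\mathcal{R},\mathcal{W})$; conversely, reconstruct the petals from the weights via Proposition~\ref{prop:weights=curves}, isotope them off the vertex, and recover $\beta$ from the meridian disks of the triangulated handlebody. Your explicit remarks on the non-canonicity of the choices and on $\beta$ being implicit in $\mathcal{T}$ are slightly more careful than the paper's treatment, but they do not change the argument.
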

\begin{proof}
    \textit{Topological to combinatorial.}
    Let $\mathcal{T}$ denote a one-vertex triangulation of the $3$-manifold $M$;
    there are many ways to verify that such a triangulation exists, for example using tools discussed
    in~\cite[Section~5.2]{JacoRubinstein2006} or~\cite[Section~3]{Weeks2005}.
    After a small perturbation, we can ensure that the system $\gamma$ of attaching circles is disjoint from the vertex of
    $\mathcal{T}$, and transverse to the edges of $\partial \mathcal{T}$;
    we can then minimise edge weights in the usual way to ensure that $\gamma$ is normal with respect to $\partial \mathcal{T}$
    (see, for example, Matveev's book~\cite[Theorem~3.2.2]{Matveev2007} for a detailed account of this standard argument).
    We complete the conversion by using Proposition~\ref{prop:rooting} to turn the attaching circles into a filling bouquet,
    which we encode using edge weights $\mathcal{W}$ and resolved edges $\mathcal{R}$.

    \textit{Combinatorial to topological.}
	First, we can use the edge weights $\mathcal{W}$ to reconstruct the
	unresolved curves (uniquely, by Proposition~\ref{prop:weights=curves}).
	Provided that these unresolved curves, together with the resolved curves given by $\mathcal{R}$, form a valid set of rooted attaching circles,
	we can isotope all of these rooted circles away from each other to give a system $\gamma$ of attaching circles on $B=\partial \mathcal{T}$.
\end{proof}

\subsection{Updating weights after an edge-flip}\label{ssec:flipWeight}

A crucial feature of our main algorithm, which we introduce in the following section,
is that it flips edges of the boundary $2$-triangulation $B$ until every filling petal is resolved.
Each time we replace an edge $\textbf{\textit{e}}$ with a new edge $\textbf{\textit{e}}'$ via such an edge-flip, we must calculate
the correct weight to assign to $\textbf{\textit{e}}'$ so that the edge weights still describe the same combinatorial filling diagram.
Here we explain how this calculation is done.

Let $\textbf{\textit{e}}$ be an edge in a one-vertex triangulation $B$ of a closed orientable surface, and arbitrarily fix an orientation on $\textbf{\textit{e}}$.
Let $f_0$ and $f_1$ denote the two triangles of $B$ that are incident to $\textbf{\textit{e}}$ on either side.
For each $i\in\{0,1\}$, label the corners of $f_i$ as in Figure~\ref{fig:update-weights-labels}. That is:
\begin{itemize}
\item
Using the chosen orientation of $\textbf{\textit{e}}$, let $c_{i,0}$ and $c_{i,1}$ denote the corners at the start and end of $\textbf{\textit{e}}$, respectively.
\item
Let $c_{i,2}$ denote the corner opposite $\textbf{\textit{e}}$.
\end{itemize}
\begin{figure}[htbp]
    \centering
    \includegraphics[width=0.25\linewidth]{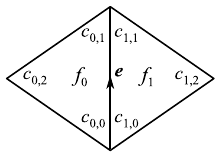}
    \caption{Local labeling of corners for the two triangles incident to an oriented edge.}
    \label{fig:update-weights-labels}
\end{figure}
For $i\in\{0,1\}$ and $j\in\{0,1,2\}$, let $n_{i,j}$ denote the normal arc coordinate that separates the corner $c_{i,j}$ from the rest of $f_i$,
and let $r_{i,j}$ denote the rooted arc coordinate in $f_i$ that intersects the edge opposite $c_{i,j}$.

In the quadrilateral given by the triangles $f_0$ and $f_1$, recall that an edge-flip replaces the diagonal edge $\textbf{\textit{e}}$ with the off-diagonal $\textbf{\textit{e}}'$;
note that prior to performing the edge-flip, $\textbf{\textit{e}}'$ is not an edge of the $2$-triangulation $B$.
Thus, calculating the weight on the new edge simply amounts to counting the number of times the combinatorial filling diagram intersects $\textbf{\textit{e}}'$.
Such points of intersection with $\textbf{\textit{e}}'$ come from three sources:
\begin{description}
\item[Normal arcs opposite $\textbf{\textit{e}}$.]
These give $N(\textbf{\textit{e}}) := n_{0,2} + n_{1,2}$ points of intersection;
see Figure~\ref{fig:update-weights-cases}, left.
\item[Root arcs disjoint from the interior of $\textbf{\textit{e}}$.]
These give $R(\textbf{\textit{e}}) := r_{0,0}+r_{0,1}+r_{1,0}+r_{1,1}$ points of intersection;
again, see Figure~\ref{fig:update-weights-cases}, left.
\item[Segments joining opposite edges of the quadrilateral $f_0\cup f_1$.]
Each such segment consists of a pair of normal arcs, either cutting off corners $c_{0,0}$ and $c_{1,1}$, or cutting off corners $c_{0,1}$ and $c_{1,0}$;
the case where $c_{0,0}$ and $c_{1,1}$ are cut off is illustrated in Figure~\ref{fig:update-weights-cases}, centre.
To count the number $X(\textbf{\textit{e}})$ of such segments, choose $i,j\in\{0,1\}$ such that
\[
n_{i,j} = \max\{ n_{0,0},\, n_{0,1},\, n_{1,0},\, n_{1,1} \}.
\]
Then $X(\textbf{\textit{e}}) = \max_{i,j}\{ 0,\, n_{i,j} - n_{1-i,j} - r_{1-i,2} \}$;
taking this maximum is necessary to account for cases where $n_{i,j} < n_{1-i,j} + r_{1-i,2}$,
such as the case illustrated in Figure~\ref{fig:update-weights-cases}, right.
\end{description}
Thus, after flipping $\textbf{\textit{e}}$, we have that the weight of the new edge $\textbf{\textit{e}}'$ is
\[ w(\textbf{\textit{e}}')=N(\textbf{\textit{e}})+R(\textbf{\textit{e}})+X(\textbf{\textit{e}}).\]
\begin{figure}[htbp]
    \centering
    \includegraphics[width=0.3\linewidth]{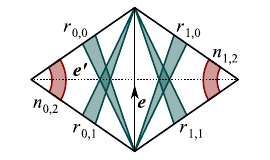}     \includegraphics[width=0.3\linewidth]{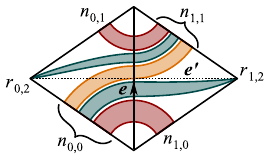}   \includegraphics[width=0.3\linewidth]{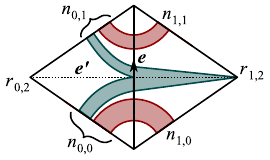}
    \caption{Three key cases that need to be accounted for when calculating the weight of the new edge after performing an edge flip.}
    \label{fig:update-weights-cases}
\end{figure}

\section{The combinatorial filling algorithm}\label{sec:main-algorithm}

The input for our algorithm is a combinatorial filling diagram.
Recall that this consists of the following three pieces of data:
\begin{itemize}
    \item a one-vertex triangulation of a compact orientable $3$-manifold with a single genus-$g$ boundary component;
    \item a set of boundary edges corresponding to the resolved filling petals; and
    \item a tuple of edge weights encoding the unresolved filling petals on the genus-$g$ boundary.
\end{itemize}

The time complexity of our algorithm, and the size of the triangulation that it constructs, depend on the following quantities:
the number $n$ of tetrahedra in the input $3$-triangulation,
the genus $g$ of the boundary surface,
and the total weight $\tau$ across all boundary edges.
For our analysis of the complexity, we will make the following assumptions:
\begin{itemize}
\item
We work in the word RAM model, so that we can perform all necessary arithmetic (such as in Section~\ref{ssec:flipWeight}) in constant time.
\item
We assume that in our data structure for triangulations, we can add tetrahedra and introduce new face-gluings in constant time.
This means, in particular, that layering and folding are constant-time operations.
\item
We assume that we can iterate through boundary edges of the triangulation in isolation (rather than iterating through all edges, and checking whether they are boundary).
This means that each such iteration only adds an $O(g)$ factor to our running time.
\end{itemize}
Under these assumptions, we will show that our algorithm runs in $O\left( g\tau+g^2 \right)$ time, and that it ultimately constructs a triangulation whose size is $O(n+g+\tau)$;
in particular, in the case where the input triangulation is a minimal layered handlebody, which has $3g-2$ tetrahedra, the size of the output triangulation is $O(g+\tau)$.

Although the running time of our algorithm scales linearly with $\tau$, the size of the edge weights is most reasonably measured by the number of bits needed to encode them.
Thus, our algorithm should really be considered to be exponential-time.

\begin{remark}
The initial preprint for this paper did not include an analysis of the running time of our algorithm.
We subsequently received correspondence from Ennes and Maria, who (as a small part of an otherwise unrelated project)
gave bounds on the running time of our algorithm~\cite[Theorem~5.6]{EnnesMaria2025HardnessQuantumArXivV3}.
The running time bounds given in this section are the same as theirs.
For completeness, we also include the (mostly routine) proofs for these bounds, which are omitted in~\cite{EnnesMaria2025HardnessQuantumArXivV3}.
\end{remark}

\subsection{Overview}
Let us now give an overview of our algorithm.
At a high-level, the algorithm consists of four major subroutines.
We give a rough summary of these subroutines and how they all fit together, before studying each individual subroutine in detail.

The first step of the algorithm is to layer tetrahedra onto the boundary, and hence
perform flips on the boundary 2-triangulation, until all filling petals are resolved.
We call the subroutine that achieves this the \textit{Petal-Resolver}.
At the end of this step all filling petals coincide with edges in the (new) boundary triangulation.

Next, we ensure each resolved filling petal is \textit{isolated}, meaning that the two triangles adjacent to this petal have no other resolved petals as edges.
If any of the resolved petals appear in the same triangle, then we adjust the triangulation by layering more tetrahedra;
an easy case is shown in Figure~\ref{fig:isolated-quads}.
We call this subroutine the \textit{Quad-Isolator}.
At the end of this step, the $g$ resolved filling petals appear as the diagonals of mutually disjoint quadrilaterals.

\begin{figure}[htbp]
    \centering
\begin{tikzpicture}[inner sep=0pt]
\node at (0,0) {
	\includegraphics[width=0.7\textwidth]{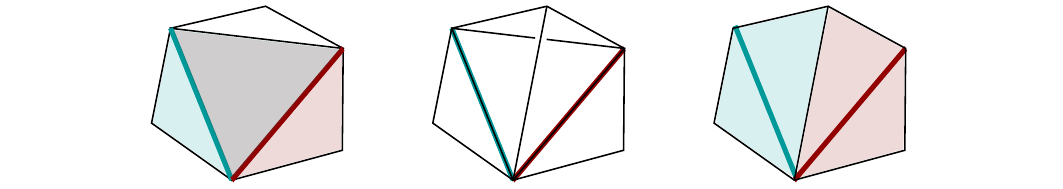}
};
\node at (-3.525,-0.1) {$b$};
\node at (-2.4,-0.3) {$r$};
\end{tikzpicture}
    \caption{The red and blue filling petals (labelled $r$ and $b$, respectively) share a triangle and are therefore \textit{not isolated} (left).
    The triangulation is adjusted by layering (centre) so that $r$ and $b$ coincide with edges forming diagonals of mutually disjoint quadrilaterals (right).}
    \label{fig:isolated-quads}
\end{figure}

The first topological change we make to the triangulation is done by \textit{folding} (recall Definition~\ref{def:folding}).
To prepare for folding, we layer an extra tetrahedron over each resolved filling petal.
The two exposed faces of each newly-layered tetrahedron are then identified by folding across the newly introduced edge.
This makes each of the filling petals homotopically trivial (recall Figure~\ref{fig:folding}), which we claim is analogous to attaching the wedges in the pinched filling algorithm.
Hence, we call this subroutine the \textit{Wedge-Folder}.
At the end of this step, the $3$-triangulation will be \emph{invalid}, in the sense that the link of the vertex is not a closed surface or a disc.

The remaining unidentified faces form a 2-triangulation of a (pinched) 2-sphere that we wish to fill trivially.
This is analogous to the step in the topological filling algorithm where we attach a 3-ball.
Hence, we refer to the final subroutine as the \textit{Ball-Filler}.
This routine may require further layering to adjust the boundary triangulation,
but ultimately the 2-sphere boundary is closed by again folding across edges to identify adjacent faces.
Note that the \textit{Ball-Filler} may fold two faces together regardless of whether those two faces belong to the same tetrahedron;
this is in contrast to the \textit{Wedge-Folder}, which always performs a layering before each fold, and therefore always folds together two faces of a single tetrahedron.

Once all this is done, the algorithm returns a one-vertex triangulation of a closed 3-manifold.
For the remainder of this section, we consider each subroutine in more detail.

\subsection{Petal-Resolver}\label{ssec:PR}
The Petal-Resolver layers tetrahedra onto the genus-$g$ boundary to adjust the triangulation in such a way that each filling petal becomes resolved
(that is, each petal appears as a boundary edge of the triangulation).
Recall that the total weight of the boundary edges counts the number of points in which \emph{unresolved} petals intersect
boundary edges of the triangulation, so the goal is to reduce this total weight to zero.
We say that a boundary edge $\textbf{\textit{e}}$ is \textit{reducible} if performing the flip to replace $\textbf{\textit{e}}$ would strictly decrease the total weight, and \textit{irreducible} otherwise.

\begin{algorithm}[Petal-Resolver]\label{algm:petal-resolver}
Given a one-vertex 3-manifold triangulation $\mathcal{T}$ with genus-$g$ boundary, together with a valid collection of filling petals with total weight $\tau$:
	\begin{enumerate}
	\item\label{step:PR-find-edge}
	If all filling petals are resolved, then terminate and return the triangulation $\mathcal{T}$.
	Otherwise, use the following loop to find a reducible boundary edge in $\mathcal{T}$:
		\begin{enumerate}[(a)]
		\item
		Let $\textbf{\textit{e}}$ denote the next boundary edge of $\mathcal{T}$, and let $W$ be the weight of $\textbf{\textit{e}}$.
		\item
		Follow Section~\ref{ssec:flipWeight} to calculate the weight $W'$ of the new boundary edge that would result from flipping $\textbf{\textit{e}}$.
		\item
		If $W'<W$, then go to Step~\ref{step:PR-layer} with this weight $W'$ and this reducible edge $\textbf{\textit{e}}$.
		Otherwise, go back to the top of this loop.
		\end{enumerate}
\item\label{step:PR-layer}
Modify $\mathcal{T}$ by layering a tetrahedron over $\textbf{\textit{e}}$, thereby modifying $\partial \mathcal{T}$ via a flip replacing $\textbf{\textit{e}}$ with a new boundary edge $\textbf{\textit{e}}'$.
Assign the weight $W'$ to $\textbf{\textit{e}}'$, and go back to Step~\ref{step:PR-find-edge} with the now-modified triangulation $\mathcal{T}$.
	\end{enumerate}
\end{algorithm}

\begin{proposition}\label{prop:PR-terminates}
    If one or more filling petals remains unresolved, then there must be at least one reducible edge.
    Hence, the Petal-Resolver is guaranteed to terminate in $O(g\tau)$ time.
    Moreover, the Petal-Resolver adds at most $\tau$ new tetrahedra (via layering).
\end{proposition}
\begin{proof}
The hard part is showing that it is always possible to find a reducible edge, so we first deal with the claims about the complexity of the Petal-Resolver.
Observe that Step~\ref{step:PR-find-edge} iterates through $O(g)$ boundary edges but otherwise only needs to perform constant-time arithmetic.
Step~\ref{step:PR-layer} adds a single tetrahedron via layering (in constant time), and is guaranteed to reduce the total weight.
Therefore, the Petal-Resolver repeats these two steps at most $\tau$ times, so the overall running time is $O(g\tau)$ and the number of new tetrahedra is at most $\tau$.

We devote the rest of this proof to showing that there is always at least one reducible edge.
In fact, we will show that at least one of the \textit{maximal-weight} edges must be reducible.

Consider two adjacent triangles where the shared edge \textbf{\textit{e}} has maximal weight amongst all edges in the boundary triangulation.
The two triangles must be distinct, since a triangle cannot be adjacent to itself in the boundary triangulation
(this would require either an extra vertex or that the boundary were non-orientable).

For the quadrilateral formed by these two triangles, we label vertices and edges as in Figure~\ref{fig:PR-quad-labels}.
We know that the four vertices are actually copies of the same vertex, and the edges may also be identified in pairs;
however, we assign them distinct labels for clarity.
We label the types of arcs that appear in a quadrilateral as per Figure~\ref{fig:PR-quad-labels}.
Let $v_i$ be the number of arcs in the quadrilateral cutting off vertex $i\in\{1,2,3,4\}$.
Let $x$ be the number of arcs that cut across the quadrilateral, either through edges $\textbf{\textit{a}},\,\textbf{\textit{e}},\,\textbf{\textit{b}}$ or edges $\textbf{\textit{d}},\,\textbf{\textit{e}},\,\textbf{\textit{c}}$.
Let $r_{\textbf{\textit{a}}}$ and $r_{\textbf{\textit{d}}}$ be the number of arcs that are
rooted at vertex 3, and that leave the quadrilateral via edge $\textbf{\textit{a}}$ and $\textbf{\textit{d}}$, respectively.
Define $r_{\textbf{\textit{b}}}$ and $r_{\textbf{\textit{c}}}$ similarly, but for arcs rooted at vertex 4.
Since we assume \textbf{\textit{e}} is maximal-weight, Corollary~\ref{cor:max-weight-root} implies that there can be no arcs rooted at vertices 1 or 2.

\begin{figure}[htbp]
    \centering
    \includegraphics[width=0.28\textwidth]{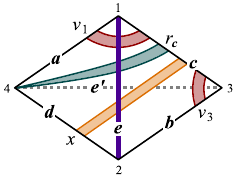}
    \caption{Labels for edges and arcs in a quadrilateral that are used in the proof of Proposition~\ref{prop:PR-terminates}.}
    \label{fig:PR-quad-labels}
\end{figure}

\paragraph*{Case 1:} Both triangles contain rooted arcs.
\begin{enumerate}[label={(\roman*)}]
\item
Suppose a filling petal $\gamma$ appears as an off-diagonal of the quadrilateral, as in Figure~\ref{subfig:PR-both-roots-a} or Figure~\ref{subfig:PR-both-roots-b}.
Performing the flip in either of these quadrilaterals exchanges the edge \textbf{\textit{e}} for an edge that coincides with the filling petal $\gamma$.
    In particular, \textbf{\textit{e}} is reducible.
    \item If there is no filling petal appearing as an off-diagonal of the quadrilateral,
    we have the scenario shown in Figure~\ref{subfig:PR-both-roots-c} (up to symmetry).
    Suppose there are $x\geqslant 0$ normal arcs that cross the quadrilateral between the rooted arcs.
    Further suppose that there are $v_1\geqslant 0$ normal arcs cutting off vertex 1, and $v_2\geqslant 0$ cutting off vertex 2.
    Finally, suppose there are $r_\textbf{\textit{c}}\geqslant 1$ arcs rooted at vertex 4 and $r_\textbf{\textit{d}}\geqslant 1$ rooted arcs rooted at vertex 3.
    In this case, performing the flip strictly reduces the edge weight, since    $w(\textbf{\textit{e}})=v_1+r_\textbf{\textit{c}}+x+r_\textbf{\textit{d}}+v_2$,
    while the weight of the flipped edge would be $w(\textbf{\textit{e}}')=x$.
    Therefore \textbf{\textit{e}} is reducible.
\end{enumerate}
\begin{figure}[htbp]
        \centering
        \begin{subfigure}{0.28\textwidth}
            \centering
            \includegraphics[width=\linewidth]{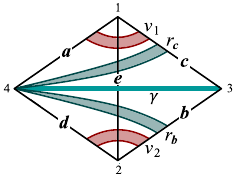}
            \caption{Case 1(i).}
            \label{subfig:PR-both-roots-a}
        \end{subfigure}
        \hfill
        \begin{subfigure}{0.28\textwidth}
            \centering
            \includegraphics[width=\linewidth]{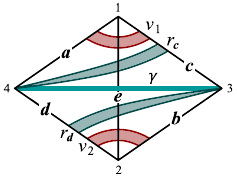}
            \caption{Case 1(i).}
            \label{subfig:PR-both-roots-b}
        \end{subfigure}
        \hfill
        \begin{subfigure}{0.28\textwidth}
            \centering
            \includegraphics[width=\linewidth]{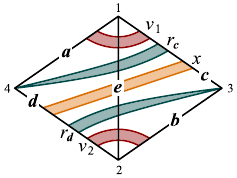}
            \caption{Case 1(ii).}
            \label{subfig:PR-both-roots-c}
        \end{subfigure}
        \caption{All possible configurations of arcs when both of the triangles contain rooted arcs, as in Case~1.}
        \label{fig:PR-both-roots}
    \end{figure}

\paragraph*{Case 2:} Only one triangle contains rooted arcs.
\begin{enumerate}[label={(\roman*)}]
    \item If there are multiple rooted arcs that leave the quadrilateral through each of the two opposite edges,
    we have the scenario shown in Figure~\ref{subfig:PR-one-root-a} (up to symmetry).
    In this case, $w(\textbf{\textit{e}})=v_1+r_\textbf{\textit{c}}+r_\textbf{\textit{b}}+v_2$,
    with $v_1,v_2\geqslant 0$ and $r_\textbf{\textit{c}},r_\textbf{\textit{b}}\geqslant 1$.
    Meanwhile, the weight of the flipped edge is $v_3\geqslant 0$.
    Note that $w(\textbf{\textit{c}})=v_1+r_\textbf{\textit{c}}+v_3$ can be at most equal to the weight of \textbf{\textit{e}},
    by the assumption that \textbf{\textit{e}} is maximal, so $v_1+r_\textbf{\textit{c}}+r_\textbf{\textit{b}}+v_2\geqslant v_1+r_\textbf{\textit{c}}+v_3 >v_3$.
    Since the flipped edge would have weight $w(\textbf{\textit{e}}')=v_3$,
    performing the flip strictly reduces the total weight, and hence \textbf{\textit{e}} is reducible.
    \item If all rooted arcs leave the quadrilateral through the same opposite edge, then
    we have the scenario shown in Figure~\ref{subfig:PR-one-root-b} (up to symmetry).
    Similar to the previous case, we have $w(\textbf{\textit{e}})=v_1+r_\textbf{\textit{c}}+x+v_2$, which (by the maximality assumption)
    is at least as big as the weight of edge $\textbf{\textit{c}}$, $w(\textbf{\textit{c}})=v_1+r_\textbf{\textit{c}}+x+v_3$.
    Together with the fact that $r_\textbf{\textit{c}}\geqslant 1$, this gives us that
    $v_1+r_\textbf{\textit{c}}+x+v_2\geqslant v_1+r_\textbf{\textit{c}}+x+v_3 > x+v_3$.
    Since the flipped edge would have weight $w(\textbf{\textit{e}}')=x+v_3$
    we again see that performing the flip strictly reduces the total weight, and hence that \textbf{\textit{e}} is reducible.
\end{enumerate}

\begin{figure}[htbp]
        \centering
        \begin{subfigure}{0.28\textwidth}
            \centering
            \includegraphics[width=\linewidth]{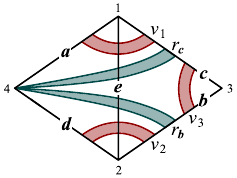}
            \caption{Case 2(i).}
            \label{subfig:PR-one-root-a}
        \end{subfigure}
        $\qquad$
        \begin{subfigure}{0.28\textwidth}
            \centering
            \includegraphics[width=\linewidth]{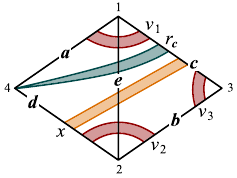}
            \caption{Case 2(ii).}
            \label{subfig:PR-one-root-b}
        \end{subfigure}
        \hfill
        \caption{All possible configurations of arcs when exactly one of the two triangles contains rooted arcs, as in Case~2. }
        \label{fig:PR-one-root}
    \end{figure}

\paragraph*{Case 3:} Neither triangle contains a rooted arc.
If \textbf{\textit{e}} is reducible then we are done, so assume it is irreducible.
We will show that there must be some other (maximal-weight) edge that is reducible.

To this end, first notice that there can never be arcs that cut the quadrilateral in half in both directions at the same time, since such arcs would intersect.
Without loss of generality, we assume $x$ is the number of arcs cutting through edges $\mathbf{d,e,c}$, as in Figure~\ref{fig:PR-neither-root}.

\begin{figure}[htbp]
    \centering
    \includegraphics[width=0.28\textwidth]{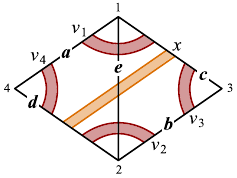}
    \caption{Configuration of arcs in a quadrilateral where neither triangle contains a root, as in Case~3.}
    \label{fig:PR-neither-root}
\end{figure}

Under the maximality assumption, we have that $w(\textbf{\textit{e}})=v_1+x+v_2\geqslant v_1+x+v_3=w(\textbf{\textit{c}})$
and $w(\textbf{\textit{e}})=v_1+x+v_2\geqslant v_4+x+v_2=w(\textbf{\textit{d}})$.
These respectively imply that $v_2\geqslant v_3$ and $v_1\geqslant v_4$.

Note that performing the flip will result in an edge of weight $w(\textbf{\textit{e}}')=v_3+x+v_4$.
Under the assumption that \textbf{\textit{e}} is irreducible, $w(\textbf{\textit{e}}')\geqslant w(\textbf{\textit{e}})$, which implies that $v_3+v_4\geqslant v_1+v_2$.
In summary, $v_3+v_4\geqslant v_1+v_2$ by the irreducibility assumption, and the maximality assumption
implies both $v_2\geqslant v_3$ and $v_1\geqslant v_4$, so altogether we have $v_2=v_3$ and $v_1=v_4$.

This means that edges $\textbf{\textit{c}}$ and $\textbf{\textit{d}}$, which have weights given by
$w(\textbf{\textit{c}})=v_1+x+v_3$ and $w(\textbf{\textit{d}})=v_2+x+v_4$, are also maximal-weight edges.
Note that one or both of $\textbf{\textit{a}}$ and $\textbf{\textit{b}}$ may also be maximal-weight edges.
If any of the additional maximal-weight edges are reducible, then we are done.

Hence, let us consider the case where all maximal-weight edges are also irreducible;
by our previous arguments, this means that none of these maximal-weight edges are incident to triangles containing rooted arcs.
Beginning with the quadrilateral just discussed, inductively build an \textit{irreducible region} by
including the triangles adjacent to each maximal-weight irreducible edge (if they are not already included).
That is, for each maximal-weight edge \textbf{\textit{f}} along which we include an additional
triangle, we can make the same arguments for the quadrilateral at \textbf{\textit{f}};
this yields additional maximal-weight edges, along which we might need to include further triangles.
This process must terminate, since the total triangulation is finite. The result is a subcomplex in which all interior edges are maximal-weight and irreducible.

Using the labels of Figure~\ref{fig:PR-neither-root}, and assuming the quadrilateral is contained inside the irreducible region,
notice that $\textbf{\textit{c}}$, $\textbf{\textit{d}}$ and $\textbf{\textit{e}}$ must each be maximal-weight and irreducible,
and that each of $\textbf{\textit{a}}$ and $\textbf{\textit{b}}$ may also be maximal-weight and irreducible.
In particular, notice that each triangle in the irreducible region must be incident to at least two maximal-weight irreducible edges.
We call a triangle \emph{equilateral} if all three of its edges are maximal-weight and irreducible,
and we call it \emph{isosceles} if exactly two of its edges are maximal-weight and irreducible.
With this terminology, we can say that every triangle in the irreducible region is either isosceles or equilateral.

From here, we argue that there must be at least one curve contained entirely in the irreducible region.
Such a curve must consist entirely of normal arcs, and is therefore not rooted.
This contradicts the assumption that we started with a valid collection of filling petals.

To find such an unrooted curve, let $m$ denote the maximal weight, and consider an arbitrary weight-$m$ edge $\textbf{\textit{e}}$ in the irreducible region.
Let $p_1,\ldots,p_m$ denote the points at which the filling petals intersect $\textbf{\textit{e}}$, labelled in the order that they appear along $\textbf{\textit{e}}$.
If $m$ is odd, then we write $m=2k+1$ and call the point $p_{k+1}$ the \emph{innermost intersection};
otherwise, if $m$ is even, then we write $m=2k$ and call the points $p_k$ and $p_{k+1}$ the \emph{innermost intersections}.

With this terminology in mind, consider an arbitrary triangle $\Delta$ in the irreducible region.
If $\Delta$ is equilateral, then observe that $m$ must be even;
moreover, the innermost intersections are all paired together by three \emph{innermost arcs}, as shown in Figure~\ref{fig:innermost-intersections} (left).
On the other hand, if $\Delta$ is isosceles, then the innermost intersections are paired together by
either one or two (depending on the parity of $m$) \emph{innermost arcs}, as shown in Figure~\ref{fig:innermost-intersections} (right and centre).

\begin{figure}[htbp]
    \centering
    \includegraphics[width=0.6\textwidth]{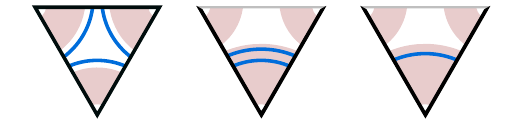}
    \caption{The $3$, $2$ or $1$ (respectively, from left to right) innermost arcs that are present in an arbitrary triangle $\Delta$ in an irreducible region.
    Left: When $\Delta$ is equilateral and $m$ is even.
    Centre: When $\Delta$ is isosceles and $m$ is even.
    Right: When $\Delta$ is isosceles and $m$ is odd.}
    \label{fig:innermost-intersections}
\end{figure}

Notice that every innermost arc has both endpoints on a maximal-weight irreducible edge;
in other words, every innermost arc must be disjoint from the boundary of the irreducible region.
Thus, the innermost arcs must all join together to give at least one closed curve that never leaves the irreducible region, which produces the desired contradiction.
Hence, we conclude that we are guaranteed to find a reducible maximal-weight edge.

Since there is always a reducible edge, the Petal-Resolver is able to strictly reduce
the total edge weight until it reaches 0, at which point all filling petals are resolved.
\end{proof}

\subsection{Quad-Isolator}
Our main algorithm makes the filling petals homotopically trivial by folding them back on themselves.
To be able to perform such folds along all the petals simultaneously, we need the petals to form diagonals of disjoint quadrilaterals.
That is, if for each (resolved) filling petal we consider the quadrilateral formed by the two boundary triangles incident to the petal,
then we require the interiors of these quadrilaterals to be mutually disjoint.

The Quad-Isolator subroutine uses layering to ensure that this requirement is satisfied.
The order in which the layerings are performed is important, and is determined by a certain labelling of the boundary edges.
We first state how the Quad-Isolator subroutine constructs the labelling, and how it uses this labelling to perform the required layerings;
we then explain why this subroutine works as intended.

\begin{algorithm}[Quad-Isolator]\label{algm:quad-isolator}
    Given a one-vertex $3$-manifold triangulation with genus-$g$ boundary $B$,
    along with $g$ distinguished edges of $B$ corresponding to resolved filling petals:
\begin{enumerate}
    \item\label{step:QI-label0}
    Label all edges corresponding to resolved filling petals with a 0.
    \item\label{step:QI-labelling}
    Starting with $i=1$, repeat the following until there are no triangles in $B$ with exactly two labelled edges:
    	\begin{enumerate}
    	\item Identify all triangles for which exactly two edges have labels
    	(each of these labels will be less than $i$).
    	\item For each triangle identified above, assign the third edge the label $i$.
    	\item Increase $i$ by one.
    	\end{enumerate}
    \item\label{step:QI-layering}
    Starting with $i$ equal to the largest edge label in the triangulation, repeat the following:
    	\begin{enumerate}
    	\item\label{sstep:QI-layering}
    	Layer a tetrahedron over each $i$-labelled edge.
    	\item Decrease $i$ by one.
    	Terminate if $i=0$.
    	\end{enumerate}
\end{enumerate}
\end{algorithm}

In the forthcoming analysis of Algorithm~\ref{algm:quad-isolator}, we refer to an edge labelled $i$ as an $i$-edge.
We call any triangle with exactly two $0$-edges a \textit{seed}.
For $i>0$, define a \textit{flip region of depth $i$} to be a collection of triangles bounded by some number of $0$-edges and a single $i$-edge,
where all edges interior to this region are assigned labels strictly between $0$ and $i$;
define a flip region of depth $0$ to be an isolated $0$-edge.
For example, the two possible flip regions of depth 2 (up to symmetry) are shown in Figure~\ref{fig:QI-flip-region-2}.

    \begin{figure}[htbp]
        \centering
        \includegraphics[width=0.75\textwidth]{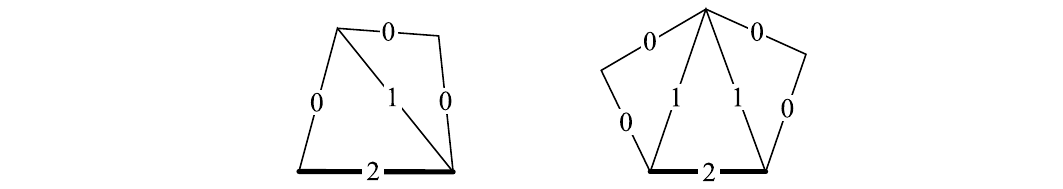}
        \caption{The two flip regions of depth 2 (up to symmetry).}
        \label{fig:QI-flip-region-2}
    \end{figure}

\begin{lemma}\label{lem:QI-flip-regions}
If an edge is labelled $i$, then it must belong to exactly one flip region of depth $i$.
\end{lemma}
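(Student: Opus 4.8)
The plan is to prove Lemma~\ref{lem:QI-flip-regions} by induction on the label $i$ of the edge, closely following the recursive structure of Step~\ref{step:QI-labelling} of the Quad-Isolator. For the base case $i=0$: an edge labelled $0$ is, by definition, an isolated edge forming (by fiat) the flip region of depth $0$, and Step~\ref{step:QI-labelling} never labels an edge $0$ that is adjacent to another $0$-edge — or rather, we simply invoke the definition that the depth-$0$ flip region is an isolated $0$-edge. (In fact I would check that the Quad-Isolator's preceding Quad-Isolator run, or the hypothesis that the $0$-edges are the resolved petals which were already isolated, is what ensures this; but for the purposes of this lemma the $i=0$ case is essentially definitional.)

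For the inductive step, suppose every edge with label strictly less than $i$ belongs to a flip region of its depth, and let $e$ be an edge labelled $i>0$. By the labelling rule, $e$ received its label because it is the third edge of some triangle $\Delta$ whose other two edges $f$ and $f'$ have labels $j, j' < i$. First I would argue that $\max(j,j') = i-1$: the edge $e$ is assigned the \emph{first} value of the counter at which $\Delta$ has exactly two labelled edges, so at step $i-1$ of the counter both $f$ and $f'$ were already labelled, hence $j,j' \le i-1$, and if both were strictly less than $i-1$ then $\Delta$ would have been triggered at an earlier counter value, a contradiction; so at least one of them, say $f$, has label exactly $i-1$. Now apply the inductive hypothesis to $f$ (depth $i-1$) and to $f'$ (depth $j' \le i-1$): each is the $i$-edge, respectively $j'$-edge, of a flip region $\mathcal{F}$, $\mathcal{F}'$ consisting of triangles bounded by $0$-edges together with that single higher-labelled edge, with all interior edges labelled strictly between $0$ and the depth.

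The key step is then to show that gluing $\mathcal{F}$, $\mathcal{F}'$ and the triangle $\Delta$ together along $f$ and $f'$ produces a flip region of depth $i$ with $e$ as its distinguished edge. The boundary of $\mathcal{F} \cup \mathcal{F}' \cup \Delta$ consists of $e$ together with the $0$-edges coming from the boundaries of $\mathcal{F}$ and $\mathcal{F}'$ (the edges $f, f'$ are now interior, with labels $i-1$ and $j'$, both strictly between $0$ and $i$); every other interior edge is interior to $\mathcal{F}$ or $\mathcal{F}'$ and so has a label strictly between $0$ and $i-1 < i$. This matches the definition of a flip region of depth $i$, provided the union is embedded — i.e. $\mathcal{F}$ and $\mathcal{F}'$ meet only along shared $0$-edges on their boundaries and do not overlap in a triangle. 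I expect this embeddedness/disjointness claim to be the main obstacle: one must rule out that a triangle, or a non-$0$ interior edge, is shared between $\mathcal{F}$ and $\mathcal{F}'$. I would handle this by observing that each triangle of a flip region of depth $k$ is labelled at the moment its third edge gets label $\le k$, and tracing which counter value first labels a shared triangle would force that triangle's third edge to have a label below both depths, contradicting that it lies on the boundary (a $0$-edge) of both regions; alternatively, one argues by a direct induction that flip regions of distinct edges with disjoint ``descendant'' sets in the labelling DAG are disjoint as subcomplexes. Once embeddedness is secured, the structural bookkeeping above closes the induction, and the case where $j' = i-1$ as well (both $f,f'$ labelled $i-1$) is handled identically since $j' \le i-1$ is all that was used.
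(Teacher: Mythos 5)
Your proposal is correct and follows essentially the same route as the paper: an induction on the label, using the fact that an $i$-labelled edge is the third edge of a triangle whose other two edges carry smaller labels (at least one equal to $i-1$), and assembling the flip region for $e$ from the flip regions of those two edges together with that triangle. You are in fact more careful than the paper, which neither proves $\max(j,j')=i-1$ nor addresses the possible overlap of the two smaller flip regions; given that the paper's definition of a flip region is only ``a collection of triangles'' with the stated boundary and interior-label conditions, these extra checks strengthen rather than change the argument.
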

\begin{proof}
The lemma holds trivially for $i=0$, so assume for the rest of this proof that $i>0$.

We first show that every $i$-edge belongs to at least one flip region of depth $i$.
This holds for $i=1$ because edges with label $1$ only ever appear as the third edge in a seed;
a seed is exactly a flip region of depth $1$.

    Next consider $i=2$.
    An edge with label $2$ belongs to a boundary triangle $\Delta$ for which the other edges are labelled either $0$ and $1$, or $1$ and $1$.
    In either case, the edges with label $1$ must belong to a seed on the other side of $\Delta$.
    So altogether, both scenarios imply that the $2$-edge is
    the only non-zero labelled boundary edge of a region otherwise bounded by $0$-edges, whose interior edges are all labelled $1$.
    Recall the possibilities from Figure~\ref{fig:QI-flip-region-2}.

    Now consider an arbitrary integer $k>2$.
    An edge will only ever be labelled with a $k$ if it belongs to a boundary triangle $\Delta$ in which
    one of the other edges is labelled $k-1$, and the third edge is labelled with some integer from $0$ to $k-1$.
    Assuming that the lemma is true for all $i<k$, we see that $\Delta$ must be incident to two flip regions of depth less than $k$.
    The union of $\Delta$ with these two flip regions forms a flip region of depth $k$.
By induction, this shows that every $k$-edge belongs to at least one flip region of depth $k$.

To complete the proof, let \textbf{\textit{e}} be an $i$-edge, for any label $i>0$.
We have just shown that there is a flip region of depth $i$ on at least one side of \textbf{\textit{e}},
but we still need to rule out the possibility that such a region appears on both sides of \textbf{\textit{e}}.
For this, observe that the union of two distinct regions of depth $i$ incident to \textbf{\textit{e}} would form a  triangulated polygon $P$ bounded entirely by $0$-edges.
To see why such a polygon $P$ cannot exist, note that the boundary triangulation $B$ (which is a one-vertex triangulation of a genus-$g$ surface) only has
$g$ edges with $0$-labels, and that each such edge can appear at most twice in the boundary of $P$.
Thus, $P$ is a polygon with no interior vertices and at most $2g$ boundary edges, which implies that $P$ can contain at most $2g-2$ triangles.
But there are $4g-2$ triangles in $B$, so the set of $0$-edges bounding $P$ must form a separating multicurve in $B$.
This contradicts the fact that the $0$-edges correspond to a valid collection of filling petals.
\end{proof}

\begin{proposition}\label{prop:QI-terminates}
    The Quad-Isolator terminates in $O(g^2)$ time, and is guaranteed to produce a triangulation in which
    all the filling petals form diagonals of mutually disjoint quadrilaterals.
    Moreover, the Quad-Isolator adds $O(g)$ new tetrahedra (via layering).
\end{proposition}
\begin{proof}
For the time complexity, Step~\ref{step:QI-label0} requires $O(g)$ time to assign the label $0$ to the $g$ filling petals.
Then, each iteration of the loop in Step~\ref{step:QI-labelling} requires checking $O(g)$ boundary triangles;
at worst, this loop repeats $O(g)$ times, and so Step~\ref{step:QI-labelling} requires $O(g^2)$ time overall.
Finally, Step~\ref{step:QI-layering} layers over each boundary edge at most once, so this adds $O(g)$ new tetrahedra in $O(g)$ time.
Altogether, the Quad-Isolator runs in $O(g^2)$ time and adds $O(g)$ new tetrahedra, as claimed.

To understand why the Quad-Isolator is guaranteed to produce the desired result (all filling petals forming diagonals of mutually disjoint quadrilaterals),
we first show that the following properties remain true throughout the execution of Step~\ref{step:QI-layering}:
\begin{enumerate}[(I)]
\item\label{inv:twoEdges}
No triangle in $B$ has exactly two labelled edges.
\item\label{inv:disjointQuads}
All the edges of $B$ with maximum valued $i$-labels must always form diagonals of mutually disjoint quadrilaterals.
\end{enumerate}
Property~\ref{inv:disjointQuads} is especially important because it ensures that Step~\ref{sstep:QI-layering} is well-defined:
there is one and only one way to layer over all edges with maximum valued $i$-labels.
To establish these two properties, our strategy is to inductively show that both properties remain true after every layering performed in Step~\ref{step:QI-layering}.

Before we give the inductive argument, we first show that property~\ref{inv:disjointQuads} follows from property~\ref{inv:twoEdges}.
To do this, let $i$ denote the maximum value of all edge labels, and suppose property~\ref{inv:twoEdges} holds.
If property~\ref{inv:disjointQuads} fails, then the $2$-triangulation $B$ must have a face $\Delta$ that is incident to more than one $i$-edge.
By property~\ref{inv:twoEdges}, all three edges of $\Delta$ must be labelled;
that is, two edges of $\Delta$ must be $i$-edges, and the third edge must be a $j$-edge for some $j\leqslant i$.
By Lemma~\ref{lem:QI-flip-regions}, $\Delta$ must therefore be incident to two flip regions
of depth $i$ and one flip region of depth $j$, as shown in Figure~\ref{subfig:QI-contradiction}.
Taking the union of $\Delta$ with these three flip regions gives a triangulated polygon $P$ whose boundary consists entirely of $0$-edges.
Following the same argument as at the end of the proof of Lemma~\ref{lem:QI-flip-regions}, such a polygon $P$ cannot exist.

Turning now to the inductive argument, observe that immediately after Step~\ref{step:QI-labelling} terminates, property~\ref{inv:twoEdges} holds by construction.
Therefore property~\ref{inv:disjointQuads} also holds.
This establishes the base case.

For the inductive step, assume that both properties hold, and as before let $i$ denote the maximum value of all edge labels.
We claim that property~\ref{inv:twoEdges} remains true after layering over an $i$-edge \textbf{\textit{e}}.
To see why, let $\Delta$ and $\Delta'$ denote the two boundary triangles on either side of \textbf{\textit{e}}.
By Lemma~\ref{lem:QI-flip-regions}, \textbf{\textit{e}} only belongs to one flip region of depth $i$;
assume without loss of generality that $\Delta$ lies inside this flip region.
Under this assumption, we know that the following is true prior to layering over \textbf{\textit{e}} (see Figure~\ref{subfig:QI-generic}):
\begin{itemize}
\item
Apart from the $i$-edge \textbf{\textit{e}}, $\Delta$ must also have an $(i-1)$-edge and an $(i-j)$-edge, for some $0< j\leqslant i$.
\item
$\Delta'$ does not belong to any flip region, and hence the two edges of $\Delta'$ other than \textbf{\textit{e}} must both be unlabelled.
\end{itemize}
We deduce that layering over \textbf{\textit{e}} produces the configuration of boundary triangles shown in Figure~\ref{subfig:QI-splitting}.
In particular, this layering creates two new boundary triangles, each of which has exactly one labelled edge.
Thus, property~\ref{inv:twoEdges} remains true after this layering, and hence property~\ref{inv:disjointQuads} also remains true.
By induction, we therefore conclude that both properties remain true throughout the execution of Step~\ref{step:QI-layering}.

    \begin{figure}[htbp]
    \begin{subfigure}{0.32\textwidth}
        \centering
        \includegraphics[width=0.8\textwidth]{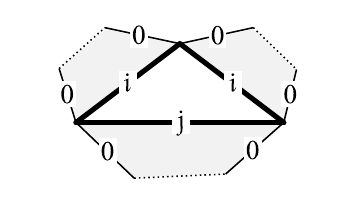}
        \caption{If $\Delta$ is incident to two $i$-edges, the third edge is a $j$-edge for some $j\leq i$.
        }
        \label{subfig:QI-contradiction}
    \end{subfigure}
    \begin{subfigure}{0.32\textwidth}
        \centering
        \includegraphics[width=0.8\textwidth]{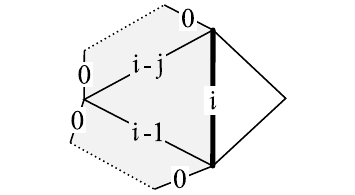}
        \caption{The generic configuration for a maximum valued $i$-edge.}
        \label{subfig:QI-generic}
    \end{subfigure}
    \begin{subfigure}{0.32\textwidth}
        \centering
        \includegraphics[width=0.8\textwidth]{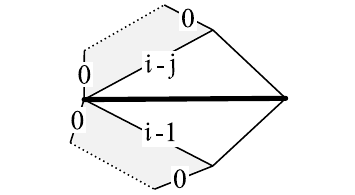}
        \caption{Layering over the $i$-edge leaves flip regions of depths $i-j$ and $i-1$.}
        \label{subfig:QI-splitting}
    \end{subfigure}
    \caption{Different configurations of flip regions in the proof of Proposition~\ref{prop:QI-terminates}.}
    \label{fig:QI-prop-proof}
    \end{figure}

To complete the proof, we continue with the same notation as above:
let \textbf{\textit{e}} be an edge with maximum valued label $i$.
As before, \textbf{\textit{e}} belongs to a flip region $R$ of depth $i$, and we have the configuration of boundary triangles shown in Figure~\ref{subfig:QI-generic}.
After layering over \textbf{\textit{e}}, we end up with the configuration shown in Figure~\ref{subfig:QI-splitting}.
Notice that the flip region $R$ gets split into two smaller flip regions with respective depths $i-1$ and $i-j$, where $0< j\leqslant i$.
Crucially, both new flip regions have depth strictly less than $i$.

The upshot is that each iteration of Step~\ref{step:QI-layering} reduces
the maximum depth of all flip regions until the only remaining labelled edges are the $0$-edges.
By property~\ref{inv:disjointQuads}, we conclude that Step~\ref{step:QI-layering} terminates with the desired result:
the $0$-edges all form diagonals of mutually disjoint quadrilaterals.
\end{proof}

\subsection{Wedge-Folder}
The following subroutine layers and folds tetrahedra that correspond to the wedges of the pinched filling algorithm, thus making the filling petals homotopically trivial.

\begin{algorithm}[Wedge-Folder]\label{algm:wedge-folder}
    Given a one-vertex $3$-manifold triangulation with genus-$g$ boundary, along with
    $g$ resolved filling petals that form diagonals of mutually disjoint quadrilaterals:
    \begin{enumerate}
        \item\label{step:WF-layer}
        \textit{Layer over each filling petal.}
        Before we can fold each filling petal back on itself, it must appear as an \textit{off-diagonal}.
        We achieve this by layering a tetrahedron over each filling petal.
        \item\label{step:WF-fold}
        \textit{Fold along the filling petals.}
        We identify the two boundary faces of each newly layered tetrahedron by
        folding across the edge that was introduced by the layering.
    \end{enumerate}
\end{algorithm}

This subroutine involves layering on $g$ tetrahedra and performing $g$ folds, so it terminates, and leaves $2g-2$ remaining boundary faces.

\begin{example}\label{eg:WF-genus-3}
    Consider the boundary triangulation for a genus-$3$ handlebody shown in Figure~\ref{fig:WF-genus-3}.
    We see (schematically) what happens when we perform the folds to make the filling petals homotopically trivial.
    Notice that the folds identify edges that were not previously in the same edge class,
    and the remaining boundary faces form a four-vertex triangulation $B$ of $S^2$.
Since the $3$-triangulation $\mathcal{T}$ remains one-vertex, the boundary $2$-triangulation $B$ cannot be embedded in $\mathcal{T}$;
instead, the four vertices of $B$ are all pinched together (as defined in Section~\ref{ssec:pinched-manifolds}) at the single vertex of $\mathcal{T}$.

\begin{figure}[htbp]
    \centering
    \includegraphics[width=0.75\textwidth]{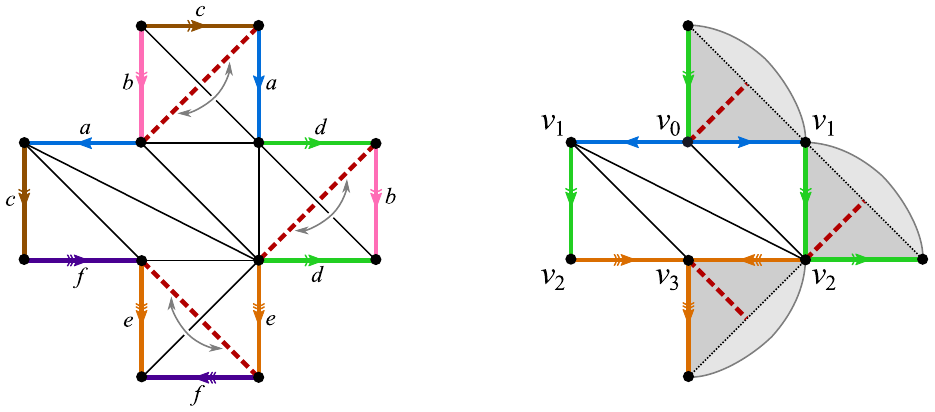}
    \caption{(Example~\ref{eg:WF-genus-3}). Left: A one-vertex genus-$3$ boundary $2$-triangulation, with filling petals indicated by dashed red lines.
    Right: The result of performing the folds that make the filling petals trivial; edges $b$, $c$, $d$ are all glued together, as are edges $e$ and $f$. Note that after these folds, the boundary $2$-triangulation becomes a sphere whose four vertices $v_0$, $v_1$, $v_2$ and $v_3$ are all pinched together.
    }
    \label{fig:WF-genus-3}
\end{figure}
\end{example}

\begin{proposition}\label{prop:WF-is-legit}
The Wedge-Folder routine terminates in $O(g)$ time, and adds exactly $g$ new tetrahedra (via layering).
Moreover, upon termination, the following must hold:
\begin{enumerate}[(a)]
\item\label{sprop:WF-link}
The link of the vertex of the $3$-triangulation is a genus-$0$ surface.
\item\label{sprop:WF-bdry}
The remaining boundary faces of the $3$-triangulation form a pinched $2$-sphere.
\end{enumerate}
\end{proposition}
\begin{proof}
For the complexity, Step~\ref{step:WF-layer} layers on exactly $g$ new tetrahedra in $O(g)$ time.
Step~\ref{step:WF-fold} performs exactly $g$ folds, and therefore also requires $O(g)$ time.

Let $L$ denote the link of the vertex of the $3$-triangulation;
note that $L$ begins as a disc, since we start with a (valid) triangulation of a $3$-manifold with boundary.
After layering over all the filling petals, $L$ remains a disc.
The boundary circle of $L$ is partitioned into $12g-6$ normal arcs (given by the intersection of this circle with the boundary triangles of the $3$-triangulation).
Each time we fold along a filling petal, we identify three pairs of these normal arcs (see Figure~\ref{fig:link-fold}), thereby changing the surface $L$.
We will show that after all the folds have been performed, $L$ remains a genus-$0$ surface.

\begin{figure}
    \centering
    \includegraphics[width=0.8\linewidth]{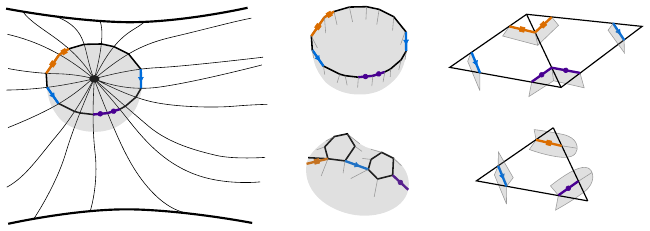}
    \caption{Folding a pair of adjacent boundary faces together changes the vertex link by identifying three pairs of arcs.
    Left: An example of how the three pairs of arcs (coloured and decorated) might appear on the boundary of the vertex link (shaded).
    Centre: The vertex link before (top) and after (bottom) identifying the three pairs of arcs.
    Top right: An example of a pair of adjacent boundary faces, such that folding these faces together would
    cause the three pairs of coloured/decorated arcs to be identified.
    Bottom right: The internal face that we obtain after performing the fold.}
    \label{fig:link-fold}
\end{figure}

To this end, we first consider the possible ways in which identifying just one pair of normal arcs can change $L$.
We have the following four cases (illustrated in Figure~\ref{fig:link-fold-cases}):
\begin{enumerate}[(i)]
\item\label{link-pinch}
If the two normal arcs intersect at precisely one endpoint, then identifying these arcs preserves both the genus and the number of boundary circles of $L$.
\item\label{link-close}
If the two normal arcs intersect at both endpoints, then identifying these arcs preserves the genus of $L$ but removes the boundary circle given by the union of the two arcs.
\item\label{link-across}
If the two normal arcs are disjoint but belong to the same boundary circle of $L$, then identifying these arcs preserves the genus of $L$ but creates an extra boundary circle.
\item\label{link-genus}
If the two normal arcs belong to distinct boundary circles of $L$, then identifying these arcs adds genus to $L$.
\end{enumerate}
Thus, it suffices to show that the folds never identify two normal arcs in different boundary circles of $L$.
For each fold, such an identification can only arise from one of the three pairs of normal arcs:
the pair that intersects the off-diagonal along which we perform the fold.

\begin{figure}
    \centering
    \begin{subfigure}[t]{0.24\textwidth}
    \centering
    	\includegraphics[width=\linewidth]{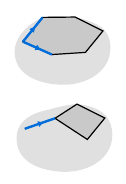}
    \caption{Case~\ref{link-pinch}.}
    \label{sfig:link-pinch}
    \end{subfigure}
    \hfill
    \begin{subfigure}[t]{0.24\textwidth}
    \centering
    	\includegraphics[width=\linewidth]{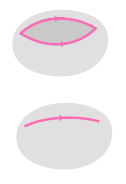}
    \caption{Case~\ref{link-close}.}
    \label{sfig:link-close}
    \end{subfigure}
    \hfill
    \begin{subfigure}[t]{0.24\textwidth}
    \centering
    	\includegraphics[width=\linewidth]{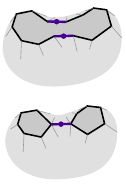}
    \caption{Case~\ref{link-across}.}
    \label{sfig:link-across}
    \end{subfigure}
    \hfill
    \begin{subfigure}[t]{0.24\textwidth}
    \centering
    	\includegraphics[width=\linewidth]{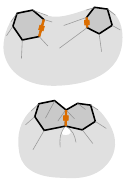}
    \caption{Case~\ref{link-genus}.}
    \label{sfig:link-genus}
    \end{subfigure}
    \caption{The four ways in which identifying a pair of arcs can change the vertex link.}
    \label{fig:link-fold-cases}
\end{figure}

It therefore suffices to give an inductive proof that after each fold, every filling petal joins two normal arcs that belong to the same boundary circle of $L$.
This is trivially true before any folds have been performed, which establishes the base case.
For the inductive step, suppose we are about to fold along a filling petal $\gamma$;
we may assume that $\gamma$ joins two normal arcs in the same boundary circle $c$ of $L$, and hence splits $c$ into two pieces.
After folding along $\gamma$, these two pieces of $c$ become two distinct boundary circles,
so we need to show that no filling petal joins a normal arc in one of these pieces with a normal arc in the other piece.
This follows as a consequence of the requirement that all the filling petals have non-crossing intersections at the vertex of the $3$-triangulation.
Thus, by induction, we conclude that the folds only ever identify normal arcs in the same boundary circle of $L$,
and hence that $L$ remains a genus-$0$ surface when the Wedge-Folder routine terminates.

For~\ref{sprop:WF-bdry}, we compare the process of attaching a folded tetrahedron along a filling petal to the process of attaching a wedge piece in the pinched filling algorithm.

    Attaching a tetrahedron by first layering over the petal, then folding along it is equivalent to
    folding the tetrahedron first, \textit{then} attaching it along the filling petal.
    Figure~\ref{fig:diagram-P-vs-C} compares this process with the analogous step in the pinched filling algorithm.

    \begin{figure}[htbp]
        \centering
        \includegraphics[width=\textwidth]{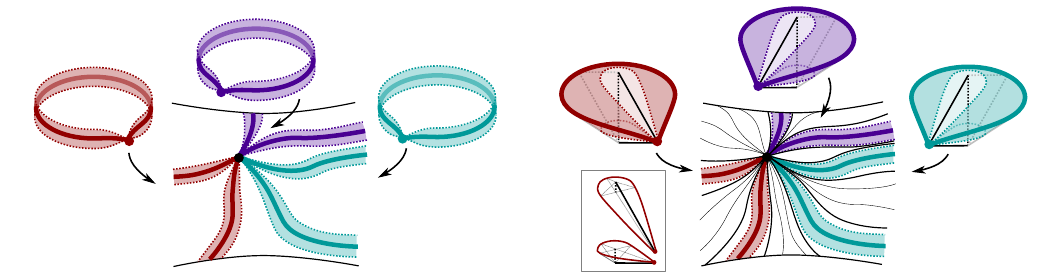}
        \caption{Comparing Step~\ref{step:standard-2} of the pinched filling algorithm (left) with
        the corresponding step in the combinatorial filling algorithm (right).
        Inset: The two cone pieces from a folded tetrahedron.}
        \label{fig:diagram-P-vs-C}
    \end{figure}

Notice that the wedge pieces can be embedded inside the folded tetrahedra.
In particular, attaching a folded tetrahedron has the same effect as attaching a wedge piece, except for some additional identification corresponding to
the \textit{cone pieces} at the vertices away from the filling petal (shown in Figure~\ref{fig:diagram-P-vs-C}, inset).
Let us consider the effect of these cone pieces.

Figure~\ref{subfig:WF-local-gluing} shows how the six corners of the tetrahedron meet around the vertex before folding, and how the gluing identifies them.
We can expand the neighbourhood of the filling petal to cover most of each of the faces,
leaving only the tips of four of the corners as in Figure~\ref{subfig:WF-local-expanded}.
If we identify the tips first, as in Figure~\ref{subfig:WF-local-cone}, we see that they just pinch off part of the 2-sphere boundary as a ball.
Hence, we can think of the extra identification as premature filling of part of the 3-ball.

    \begin{figure}[htbp]
        \centering
        \begin{subfigure}{0.325\textwidth}
        \centering
        \includegraphics[width=\textwidth]{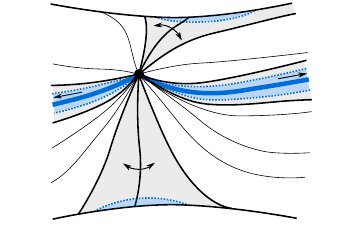}
        \caption{A local view of the six corners meeting the vertex.}
        \label{subfig:WF-local-gluing}
        \end{subfigure}
        \begin{subfigure}{0.325\textwidth}
        \centering
        \includegraphics[width=\textwidth]{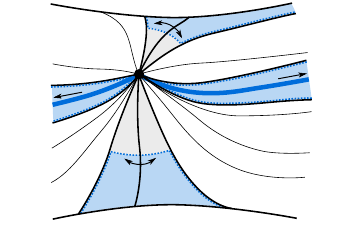}
        \caption{Expanding the neighbourhood of the filling petal.}
        \label{subfig:WF-local-expanded}
        \end{subfigure}
        \begin{subfigure}{0.325\textwidth}
        \centering
        \includegraphics[width=\textwidth]{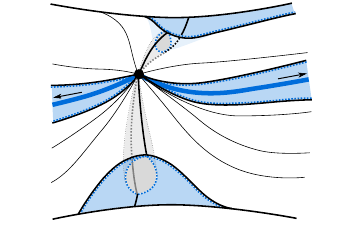}
        \caption{Pinching closed just the tips that correspond to the cone pieces.}
        \label{subfig:WF-local-cone}
        \end{subfigure}
        \caption{Understanding how the boundary is affected by attaching a folded tetrahedron.}
        \label{fig:WF-cone-pieces}
    \end{figure}

Without the cone pieces, we know from the pinched filling algorithm that the boundary is a pinched 2-sphere.
Then by considering the cone pieces alone, as in Figure~\ref{subfig:WF-local-cone}, we see that they just pinch the pinched boundary even more.
In particular, the boundary is still a pinched 2-sphere.

The fact that we still have a 2-sphere after folding along a petal can also be verified by observing that the Euler characteristic of the boundary is preserved:
two vertices are identified to become one, and two faces are glued together so that they no longer appear in the boundary,
but the resulting reduction in the Euler characteristic is exactly cancelled out by the fact that five boundary edges become two after the fold.
\end{proof}

\subsection{Ball-Filler}
After making the filling petals homotopically trivial using the Wedge-Folder routine,
we need to close up the remaining boundary faces in such a way that no new topology is introduced.
This is analogous to the step in the topological filling algorithm where a ball is glued in.
Hence, we refer to this subroutine as the Ball-Filler.

Roughly, the strategy of the Ball-Filler subroutine is to fold pairs of boundary faces together until the boundary becomes empty.
However, we cannot perform folds indiscriminately.
To see why, recall from Definition~\ref{def:folding} that each folding operation causes an off-diagonal $\gamma$ of a quadrilateral to become homotopically trivial as a loop.
Such a fold is acceptable provided that $\gamma$ is already a closed loop in the boundary $2$-triangulation;
in this case, since the boundary is a pinched $2$-sphere, $\gamma$ is already homotopically trivial before the fold, and hence the fold does not introduce new topology.
However, as discussed in Section~\ref{ssec:pinched-manifolds}, it is also possible
for $\gamma$ to be a curve whose endpoints lie on two distinct vertices of the boundary $2$-triangulation;
in this case, we must avoid folding along $\gamma$, because such a fold would add genus to the vertex link
(see the proof of part~\ref{sprop:WF-link} of Proposition~\ref{prop:WF-is-legit}, where we previously discussed how folding changes the vertex link).

Let us introduce some terminology before describing the Ball-Filler subroutine in more detail.
We call a one-vertex 3-triangulation $\mathcal{T}_0$ \emph{fillable} if its boundary consists of zero or more pinched $2$-sphere components;
in particular, the output from the Wedge-Folder subroutine is a fillable triangulation, since its boundary consists of precisely one pinched $2$-sphere.
The closed $3$-manifold $M_0$ obtained by filling in each $2$-sphere boundary component of
$\mathcal{T}_0$ with a $3$-ball is called the \emph{filled manifold} associated to $\mathcal{T}_0$.
We call another fillable triangulation $\mathcal{T}_1$ a \emph{partial filling} of $\mathcal{T}_0$ if:
\begin{itemize}
\item the filled manifolds associated to $\mathcal{T}_0$ and $\mathcal{T}_1$ are homeomorphic; and
\item $\mathcal{T}_1$ has strictly fewer boundary triangles than $\mathcal{T}_0$.
\end{itemize}
When a partial filling has empty boundary and therefore actually triangulates the filled manifold,
it would perhaps be more natural to call it a ``complete filling''; however, we use the term `partial filling' anyway, to avoid an unnecessary case distinction.

\begin{algorithm}[Ball-Filler]\label{algm:ball-filler}
    Beginning with $i=0$: for a fillable $3$-triangulation $\mathcal{T}_i$ with non-empty (and possibly disconnected) boundary $2$-triangulation $B_i$,
    iteratively construct a partial filling $\mathcal{T}_{i+1}$ as follows, until no boundary faces remain:
    \begin{enumerate}
    \item\label{step:BF-fold}
    Attempt to build a partial filling $\mathcal{T}_{i+1}$ in one of the following ways:
    	\begin{enumerate}
    	\item\label{step:BF-fold-diag}
    	If there is an off-diagonal in $B_i$ that realises a closed curve $\gamma$,
    	then build $\mathcal{T}_{i+1}$ by folding along $\gamma$.
    	\item\label{step:BF-fold-edge}
    	If there is an edge \textbf{\textit{e}} in $B_i$ that realises a closed curve $\gamma$,
    	build $\mathcal{T}_{i+1}$ by first layering a tetrahedron over \textbf{\textit{e}}, and then folding along $\gamma$.
    	\end{enumerate}
    Whenever this attempt is successful, check whether $\mathcal{T}_{i+1}$ is closed.
    If so, terminate and return $\mathcal{T}_{i+1}$;
otherwise, increase $i$ by one and go back to the start of this algorithm (in other words, restart
using $\mathcal{T}_{i+1}$ in place of $\mathcal{T}_i$).
    \item\label{step:BF-layer}
Whenever Step~\ref{step:BF-fold} is unsuccessful, modify the boundary $2$-triangulation $B_i$ as follows:
    	\begin{enumerate}
    	\item
    	Fix a vertex $v$ of $B_i$.
    	Let $d$ denote the degree of $v$, and arbitrarily label the edges of $B_i$ incident to $v$ by $e_1, \ldots, e_d$.
\item
Let $j=1$.
\item\label{step:BF-layer-ej}
Layer a tetrahedron over the edge $e_j$.
For simplicity, we continue to denote the boundary $2$-triangulation by $B_i$ after this layering.
\item\label{step:BF-check-closed-curve}
If $B_i$ contains a closed curve that is realised by either an edge or an off-diagonal, then go back to Step~\ref{step:BF-fold} (which is now guaranteed to succeed).
Otherwise, increase $j$ by one and go back to Step~\ref{step:BF-layer-ej}.
    	\end{enumerate}
    \end{enumerate}
\end{algorithm}

\begin{example}\label{eg:BF-genus-3}
    Recall Figure~\ref{fig:WF-genus-3} from Example~\ref{eg:WF-genus-3}.
    At the end of the Wedge-Folder routine, the remaining boundary was a four-vertex triangulation of a pinched 2-sphere
    (redrawn in Figure~\ref{subfig:BF-genus-3-eg.a}).
    Running Ball-Filler on this triangulation requires no additional tetrahedra, since there are two closed curves that appear as off-diagonals.

    Notice that if the original triangulation was slightly different, the 2-sphere triangulation could instead
    have been one of those shown in Figures~\ref{subfig:BF-genus-3-eg.b} or~\ref{subfig:BF-genus-3-eg.c}.
    In these cases, layering is required before folding, as dictated by
    Steps~\ref{step:BF-fold-edge} and~\ref{step:BF-layer} in the Ball-Filler algorithm.
    The pale dashed edge in Figure~\ref{subfig:BF-genus-3-eg.c} indicates a layering that would realise
    the same boundary $2$-triangulation as in Figure~\ref{subfig:BF-genus-3-eg.a}.

    \begin{figure}[htbp]
        \centering
        \begin{subfigure}{0.325\textwidth}
            \centering
            \includegraphics[width=0.75\textwidth]{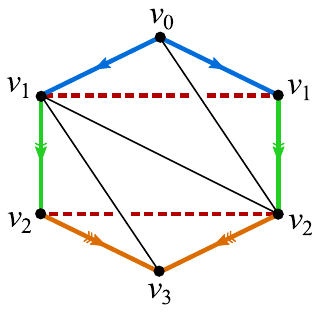}
            \caption{There are two closed curves, rooted at $v_1$ and $v_2$
            respectively, that appear as \textbf{off-diagonals}.}
            \label{subfig:BF-genus-3-eg.a}
        \end{subfigure}
        \begin{subfigure}{0.325\textwidth}
            \centering
            \includegraphics[width=0.75\textwidth]{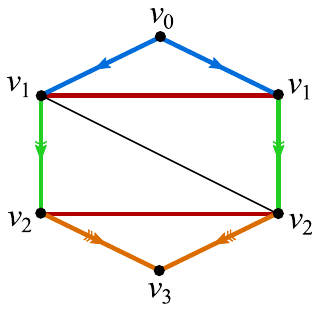}
            \caption{There are two closed curves, rooted at $v_1$ and $v_2$
            respectively, that appear as \textbf{edges}.}
            \label{subfig:BF-genus-3-eg.b}
        \end{subfigure}
        \begin{subfigure}{0.325\textwidth}
            \centering
            \includegraphics[width=0.75\textwidth]{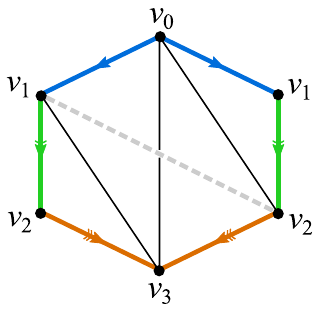}
            \caption{There are \textbf{no} closed curves that can be realised as edges or off-diagonals.}
            \label{subfig:BF-genus-3-eg.c}
        \end{subfigure}
        \caption{Examples of triangulations of the pinched 2-sphere that
        require each of the cases in Algorithm~\ref{algm:ball-filler}.}
        \label{fig:BF-genus-3-examples}
    \end{figure}
\end{example}

\begin{remark}\label{rem:BF-check-closed-curve}
As we will see in the proof of Proposition~\ref{prop:BF-terminates},
the running time of the Ball-Filler depends on how carefully we implement Step~\ref{step:BF-check-closed-curve}.
Naively, to check whether the boundary $2$-triangulation $B_i$ contains a closed curve that
is realised by either an edge or an off-diagonal, we need to iterate through $O(g)$ quadrilaterals in $B_i$.
However, this is unnecessarily inefficient, because the same check was already done immediately prior to the most recent layering performed by Step~\ref{step:BF-layer-ej}.
Thus, we only need to redo this check on quadrilaterals that include one of the two new boundary triangles introduced by the layering.
With this optimisation, Step~\ref{step:BF-check-closed-curve} becomes a constant-time operation.
\end{remark}

\begin{proposition}\label{prop:BF-terminates}
	Given a fillable triangulation with non-empty boundary, it is always possible to construct a partial filling.
    Hence, the Ball-Filler algorithm terminates in $O(g^2)$ time, and returns a triangulation of a closed $3$-manifold.
    Moreover, the Ball-Filler adds $O(g^2)$ new tetrahedra (via layering).
\end{proposition}
\begin{proof}
Let $\mathcal{T}_i$ be a fillable $3$-triangulation with non-empty boundary $B_i$.
Recall that the boundary $2$-triangulation $B_i$ might have more than one vertex,
in which case these vertices are all pinched at the single vertex of the $3$-triangulation $\mathcal{T}_i$.

The goal of the Ball-Filler routine is to eventually close the boundary using only layering and folding.
As discussed earlier, to ensure that the vertex link in $\mathcal{T}_i$ remains a genus-$0$ surface,
we must only fold along off-diagonals that begin and end at the same vertex of $B_i$.
As shown in Example~\ref{eg:BF-genus-3}, such off-diagonals might not immediately exist,
so the Ball-Filler often needs to modify the boundary $2$-triangulation by flipping edges.
To keep notation simple, we will continue to use $B_i$ to denote any boundary $2$-triangulation obtained by a sequence of such flips.

To prove that it is always possible to build a partial filling of $\mathcal{T}_i$,
we will show that there is always a finite sequence of flips on boundary edges such that $B_i$ eventually contains
at least one quadrilateral whose off-diagonal begins and ends at the same vertex of $B_i$ (as opposed to two distinct vertices that are pinched together in $\mathcal{T}_i$).
Specifically, we will prove that this is achieved by the edge flips specified in Step~\ref{step:BF-layer} of Algorithm~\ref{algm:ball-filler}.

We first eliminate the following cases where Step~\ref{step:BF-fold} immediately succeeds in building a partial filling of $\mathcal{T}_i$:
\begin{itemize}
\item If $B_i$ contains a vertex of degree 1 or 2, then it must contain a closed curve realised by
an edge or an off-diagonal, respectively (see Figure~\ref{fig:BF-degree1and2}).
\item If $B_i$ contains a triangle with more than one corner identified to a single vertex, then it must contain a closed curve realised by an edge
(all the possible cases are summarised in Figure~\ref{fig:BF-multi-contribution}).
\end{itemize}

\begin{figure}[htbp]
    \centering
    \begin{subfigure}{0.48\textwidth}
    \centering
\begin{tikzpicture}[inner sep=0pt]

\node at (0,0) {
	\includegraphics[width=0.5\textwidth]{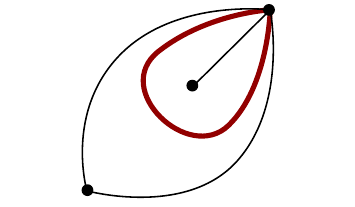}
};

\node at (0.325,0.1) {$v$};
\node at (-0.275,-0.3) {$e$};

\end{tikzpicture}
    \caption{A degree $1$ vertex $v$, with one edge $e$ (bold red) around it forming a closed curve.
    Layering over $\textbf{\textit{e}}$ results in the situation in~\ref{subfig:BF-degree2}.}
    \label{subfig:BF-degree1}
    \end{subfigure}
    \begin{subfigure}{0.48\textwidth}
    \centering
\begin{tikzpicture}[inner sep=0pt]

\node at (0,0) {
	\includegraphics[width=0.5\textwidth]{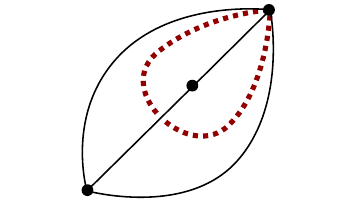}
};

\node at (0.375,0.1) {$w$};
\node at (0.1,-0.575) {$\gamma$};

\end{tikzpicture}
    \caption{A degree 2 vertex $w$, with a closed curve $\gamma$ (dotted red) appearing as an off-diagonal.
    These two faces can be identified by folding along $\gamma$.}
    \label{subfig:BF-degree2}
    \end{subfigure}
    \caption{If there is a vertex of degree 1 or 2, then we can build a partial filling after layering at most one extra tetrahedron.}
    \label{fig:BF-degree1and2}
\end{figure}

\begin{figure}[htbp]
    \centering
    \includegraphics[width=0.65\textwidth]{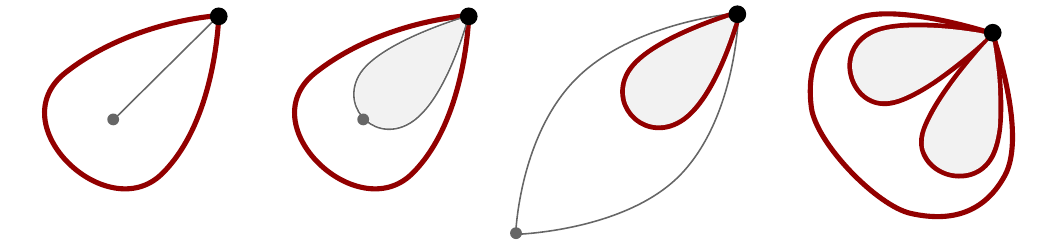}
    \caption{A single triangle that contributes more than one of its corners to the degree of a vertex must be in one the four configurations shown. In each case there is at least one edge (indicated using bold red lines) that forms a closed curve.}
    \label{fig:BF-multi-contribution}
\end{figure}

With this in mind, consider the case where Step~\ref{step:BF-fold} does not immediately succeed in building a partial filling.
In this case, we know in particular that every vertex of $B_i$ has degree at least 3,
and that no triangle in $B_i$ contributes more than once to the degree of any vertex.
Fixing an arbitrary vertex $v$ of $B_i$, and letting $d$ denote the degree of $v$, we therefore know that $B_i$ contains exactly $d$ distinct triangles meeting $v$.
Thus, flipping edges incident to $v$, as in Step~\ref{step:BF-layer} of the Ball-Filler routine,
iteratively reduces the degree of $v$ (see Figure~\ref{fig:BF-decreasing-degree}).
Observe that this is guaranteed to lead to a scenario where Step~\ref{step:BF-fold} will successfully build a partial filling;
in the worst case, we might need to reduce the degree of $v$ all the way down to 2,
but at that point $B_i$ would contain an off-diagonal that realises a closed curve, and hence no further flipping would be needed.

\begin{figure}[htbp]
    \centering
    \includegraphics[width=0.6\textwidth]{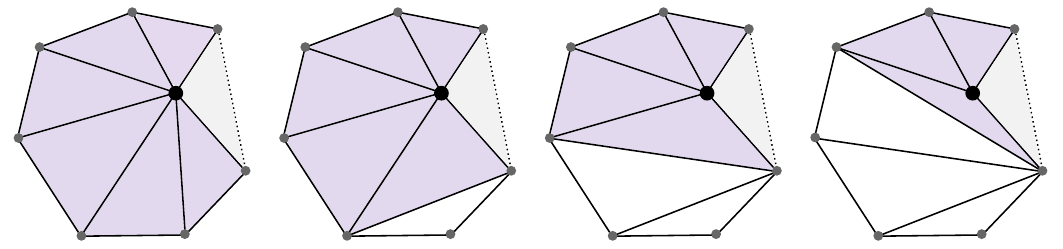}
    \caption{When no triangle contributes more than one of its corners to the degree of a vertex,
    they must be arranged as shown, and the degree can be reduced by layering.}
    \label{fig:BF-decreasing-degree}
\end{figure}

Finally, notice the effect of the Ball-Filler on the number of boundary faces:
any layering covers two boundary faces but replaces them with two new ones, so the number of boundary faces is unchanged;
on the other hand, any fold identifies two boundary faces to each other, so the number of boundary faces decreases by two.
Therefore, each time we go from a $3$-triangulation $\mathcal{T}_i$ to a partial filling $\mathcal{T}_{i+1}$, we decrease the number of boundary faces.
Hence, the Ball-Filler eventually terminates, at which point we have a $3$-triangulation with empty boundary.

To complete the proof, let us examine the computational complexity of the Ball-Filler.
At worst, Step~\ref{step:BF-fold} iterates through $O(g)$ quadrilaterals to determine whether
either~\ref{step:BF-fold-diag} or~\ref{step:BF-fold-edge} succeeds in building a partial filling.
If this is unsuccessful, then we need to run Step~\ref{step:BF-layer}, which could layer on $O(g)$ new tetrahedra in the worst case;
we need to run Step~\ref{step:BF-check-closed-curve} after each such layering,
but this is constant-time if we use the optimisation discussed in Remark~\ref{rem:BF-check-closed-curve}.
Once Step~\ref{step:BF-layer} is done, we repeat Step~\ref{step:BF-fold}, which is now guaranteed to succeed.
So, altogether, we can build a single partial filling in $O(g)$ time.
Since each partial filling reduces the number of boundary faces by two, and since we initially have $O(g)$ boundary faces, the Ball-Filler runs in $O(g^2)$ time overall;
moreover, each partial filling requires adding $O(g)$ new tetrahedra in the worst case, so in total the Ball-Filler adds $O(g^2)$ new tetrahedra.
\end{proof}

\subsection{Full algorithm}\label{subsec:full-algm}
Putting all of the subroutines together, we get our main algorithm, which allows us to
fill in the boundary component of any one-vertex triangulation of a $3$-manifold with boundary.
In particular, by filling in the boundary of a handlebody, we can effectively construct a one-vertex triangulation of a $3$-manifold from a Heegaard diagram.

Using the notation from Section~\ref{sec:standard-diagram}, suppose $(M,B,\gamma)$ is a topological filling diagram for $M(\gamma)$, defined by a compact orientable $3$-manifold $M$ with
a single genus-$g$ boundary component $B$, together with $g$ attaching circles given by $\gamma$.
Let $\mathcal{T}$ be a one-vertex triangulation of $M$, with $B_\Delta$ the induced triangulation on $B$.
Let $\mathcal{R}$ and $\mathcal{W}$ encode the curves $\gamma$ on $B_\Delta$.

\begin{algorithm}[Combinatorial filling algorithm]\label{algm:main-combinatorial}
Given a combinatorial filling diagram $(\mathcal{T},\mathcal{R},\mathcal{W})$,
as described above, build a one-vertex triangulation of $M(\gamma)$ as follows:
\begin{enumerate}[label={(\arabic*)}]
    \item \textit{Run Petal-Resolver.}
    Layer tetrahedra over reducible edges until all filling petals appear as (boundary) edges in the triangulation.
    \item \textit{Run Quad-Isolator.}
    Layer tetrahedra until all the filling petals form diagonals of mutually disjoint quadrilaterals.
    \item \textit{Run Wedge-Folder.}
    For each filling petal, layer a tetrahedron over it to make it an off-diagonal, and then fold to make the filling petal homotopically trivial.
    \item \textit{Run Ball-Filler.}
    Iteratively identify remaining boundary faces by folding without introducing new topology.
    Terminate and return the resulting triangulation.
\end{enumerate}
\end{algorithm}

\begin{theorem}\label{thm:main-alg}
    The combinatorial filling algorithm terminates in $O(g\tau+g^2)$ time, and returns
    a one-vertex triangulation of the filled manifold $M(\gamma)$ with $O(n+g+\tau)$ tetrahedra.
\end{theorem}

\begin{proof}
    We have seen that each subroutine terminates and yields output that is valid as input for the subsequent subroutine.
By adding up the running times for the four subroutines, we see that the overall algorithm is guaranteed to terminate in $O(g\tau+g^2)$ time.

    The resulting triangulation is one-vertex by construction.
In total, the four subroutines add $O(g+\tau)$ tetrahedra to the $n$-tetrahedron input triangulation, which gives the stated bound on the size of the output triangulation.

    It remains to be shown that the output is indeed a valid triangulation of the manifold $M(\gamma)$ described by the input.
    Recall from Proposition~\ref{prop:algm-equiv-S=P} that the pinched filling algorithm constructs the same manifold as the topological filling algorithm.
    Here we show that the combinatorial filling algorithm builds the same manifold as the pinched filling algorithm.

    Recall (from Proposition~\ref{prop:diag-equiv-S=C}) that
    a topological filling diagram can be converted to a combinatorial one by:
    assigning a one-vertex triangulation to the manifold $M$, and isotoping the attaching circles so that
    they are in rooted normal position with respect to the induced boundary triangulation $B_\Delta$.
    In particular, by ignoring the triangulation, this also determines a pinched filling diagram.

    Since Petal-Resolver and Quad-Isolator only involve layering,
    the effect of these subroutines on the combinatorial filling diagram is just to change the boundary triangulation.
    Since the pinched filling diagram does not see the triangulation,
    the pinched and combinatorial filling diagrams are still equivalent by the end of the Quad-Isolator routine.

	From here, the pinched filling algorithm proceeds by attaching a collection $W$ of wedges to turn the boundary into a pinched $2$-sphere,
	then filling the pinched $2$-sphere with a $3$-ball $Z$ to obtain the $3$-manifold $M(\gamma)$.
	By construction, the triangulation at the end of the Quad-Isolator routine coincides with the closure of $M(\gamma) - (W\cup Z)$;
	in other words, this triangulation embeds inside $M(\gamma)$.
	As described in the proof of Proposition~\ref{prop:WF-is-legit}, the Wedge-Folder routine effectively attaches a collection $F$
	of folded tetrahedra, and we can view $F$ as a union of the wedges $W$ with some cone pieces.
	We can think of the cone pieces as forming part of the ball $Z$, so that our triangulation
	is still embedded inside $M(\gamma)$ at the end of the Wedge-Folder routine;
	in fact, our triangulation coincides with $M(\gamma)$ minus the interior of a pinched ball.

	The final step of the combinatorial filling algorithm is to use the Ball-Filler
	routine to close the pinched $2$-sphere boundary of our triangulation.
	This involves layering and folding operations that can be performed inside $M(\gamma)$,
	preserving the property that the triangulation is embedded in $M(\gamma)$.
	At the end of the Ball-Filler routine, our triangulation coincides with the entirety of $M(\gamma)$, and is therefore homeomorphic to $M(\gamma)$.
\end{proof}

\section{Triangulation complexity}\label{sec:complexity}

Throughout the rest of this paper, we will refer to the triangulations constructed by
Algorithm~\ref{algm:main-combinatorial} as \emph{petal-filled} triangulations.
In this section, we consider the complexity of petal-filled triangulations,
particularly in the case where the input filling diagram describes a Heegaard splitting.

\subsection{Cutwidth}
\begin{definition}[Cutwidth] \label{defn:cutwidth}
Let $G=(V,E)$ be a multigraph.
For a given ordering $(v_1,\ldots,v_n)$ of the vertices in $V$, define a \emph{cutset} $C_\ell$ for each $\ell\in\{1,\ldots,n-1\}$ to be the set of edges joining all vertices $v_i$ for $i\leq\ell$ to any vertices $v_j$ with $j>\ell$.
The \emph{width} of the ordering is given by the maximum of $\lvert C_\ell \rvert$ across all choices for $\ell$.
The \emph{cutwidth} of $G$ is the minimum width across all possible orderings of $V$.
The cutwidth of a triangulation $\mathcal{T}$ is defined to be the cutwidth of the dual graph of $\mathcal{T}$.
\end{definition}

\begin{remark}
    We can find an upper bound on cutwidth by calculating the width of a single choice of ordering of tetrahedra
    (which corresponds to an ordering of the vertices of the dual graph).
\end{remark}

\begin{theorem}[Genus bounds cutwidth] \label{thm:bounded-cutwidth}
    When the input triangulation for Algorithm~\ref{algm:main-combinatorial} is a layered handlebody of genus $g$,
    the petal-filled triangulation obtained as output has cutwidth bounded above by $4g-2$.
\end{theorem}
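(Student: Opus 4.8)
The plan is to produce a single ordering of the tetrahedra of the output triangulation whose width (in the sense of Definition~\ref{defn:cutwidth}) is at most $4g-2$; by the remark following that definition, this bounds the cutwidth. The ordering is the obvious one suggested by the construction: first list the tetrahedra of the input layered handlebody, in an order to be specified, and then list the tetrahedra created by the Petal-Resolver, Quad-Isolator, Wedge-Folder and Ball-Filler, in the order in which Algorithm~\ref{algm:main-combinatorial} introduces them. Write $T_1,\dots,T_N$ for the tetrahedra in this order and $C_\ell$ for the cutset after $T_\ell$.

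The key step is to bound $\lvert C_\ell\rvert$ by the size of the \emph{active boundary} at the corresponding stage, namely the boundary surface of the sub-triangulation consisting of $T_1,\dots,T_\ell$ together with all face identifications that the algorithm has performed up to the moment just before it creates $T_{\ell+1}$. I would argue that every face contributing to $C_\ell$ lies on this active boundary: such a face belongs to some $T_i$ with $i\le\ell$ and is identified, in the final triangulation, to some $T_j$ with $j>\ell$; since $T_j$ has not yet been created at the stage in question, and since every identification is performed either when a tetrahedron is layered on or by a fold between two currently-exposed faces, this face cannot yet have been identified, so it is exposed. Hence $\lvert C_\ell\rvert$ is at most the number of triangles on the active boundary at that stage. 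It is worth emphasising that this is only an inequality: two faces on the active boundary may subsequently be folded onto \emph{each other}, and the resulting face pairing then joins two tetrahedra lying on the same side of the cut, contributing nothing to $C_\ell$. This is precisely why the folds --- the feature distinguishing our construction from a pure sequence of layerings --- do no harm.

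It then remains to check that the active boundary has at most $4g-2$ triangles at every stage. After the Petal-Resolver and the Quad-Isolator the active boundary is a one-vertex triangulation of the genus-$g$ surface bounding the handlebody, which has exactly $4g-2$ triangles, and layering leaves the number of boundary triangles unchanged; so the count is $4g-2$ at the start of the Wedge-Folder. Each iteration of the Wedge-Folder layers one tetrahedron (no change) and then folds (a decrease of two), so the count descends from $4g-2$ to $2g-2$ without ever exceeding $4g-2$; the Ball-Filler only layers (no change) or folds (a decrease of two), so the count stays at most $2g-2$ throughout. Thus $\lvert C_\ell\rvert\le 4g-2$ for every $\ell$ indexing a tetrahedron created after the handlebody is complete. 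For the tetrahedra of the handlebody itself, we instead choose the ordering provided by the analysis of layered handlebodies behind Husz\'ar and Spreer's theorem~\cite{Huszar-Spreer}: a layered handlebody of genus $g$ admits an ordering of its tetrahedra all of whose cutsets have at most $4g-2$ edges, ending with the active boundary equal to the $4g-2$ triangles of the handlebody's boundary surface. Concatenating this with the ordering above yields an ordering of $T_1,\dots,T_N$ of width at most $4g-2$, as required.

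The step I expect to require the most care is the active-boundary bookkeeping in the presence of folds: one must be precise about exactly which face identifications have occurred at each stage, verify that a face later folded onto another face of an earlier tetrahedron genuinely leaves every relevant cutset, and confirm that no boundary triangle is double-counted when a fold produces a self-identified tetrahedron or a multiple edge in the dual graph. A secondary point is to state the Husz\'ar--Spreer ordering in the form we actually need --- for a single layered handlebody, with its boundary triangulation as the terminal cut --- rather than only for a closed layered triangulation.
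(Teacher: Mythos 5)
Your proposal is correct and follows essentially the same strategy as the paper's proof: order the tetrahedra by the order in which the construction introduces them, and bound each cutset by the number of triangles on the exposed boundary at that stage, noting that layering preserves this count at $4g-2$ while folding only decreases it. Your write-up is in fact more careful than the paper's (which leaves the cutset-versus-active-boundary bookkeeping and the treatment of the handlebody's own tetrahedra implicit), but no new idea is involved.
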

\begin{proof}
    The boundary of the initial layered handlebody consists of $4g-2$ triangles.
This number of boundary triangles is preserved as we layer tetrahedra onto the boundary.

    Layering tetrahedra induces a natural ordering of tetrahedra in a given triangulation.
    Starting with the layered handlebody, we label the tetrahedra in the order they are layered.

	Our algorithm modifies the given triangulation via a sequence of layering and folding.
	We continue numbering the newly-added tetrahedra according to the order in which they are layered.
	Let $\Delta_i$ denote the $i$th tetrahedron in this ordering.

	At every stage, we have at most $4g-2$ boundary triangles, since layering preserves this number and folding reduces this number by two.
	Consider the $\ell$th cutset $C_\ell$ under this ordering;
	this corresponds to gluings between tetrahedra $\Delta_i$ and $\Delta_j$ for some $i\leqslant\ell<j$.
To bound $C_\ell$, consider the boundary faces that remain after the $\ell$th layering.
The size of $C_\ell$ is equal to the number of such faces that are eventually glued to some tetrahedron $\Delta_j$, where $j>\ell$.
Since there are at most $4g-2$ boundary faces, the cutset $C_\ell$ must have size at most $4g-2$.\footnote{%
The size of $C_\ell$ need not be equal to $4g-2$ because some pairs of boundary faces might be glued to each other via folding;
such gluings do not contribute to $C_\ell$.}
\end{proof}

In Section~\ref{sec:experimentation}, we discuss cutwidths for different triangulations of the same manifolds;
in particular, we construct many petal-filled triangulations, and examine how the cutwidth changes after simplifying using Regina~\cite{Regina}.

\subsection{Minimal triangulations}\label{sec:min-tris}

A triangulation $\mathcal{T}$ of a closed $3$-manifold $M$ is \textit{minimal} if there is no triangulation of $M$ with fewer tetrahedra than $\mathcal{T}$.
Layered solid tori frequently appear as subcomplexes in minimal triangulations;
this is discussed, for example, in Burton's thesis~\cite[Chapter~3]{Burton2003Thesis},
and can also be seen in explicit examples by studying the ``composition'' of census triangulations in Regina.
Despite this, layered triangulations are only conclusively known to be minimal for two specific families of lens spaces (i.e., $3$-manifolds with Heegaard genus equal to $1$):
$L(2n,1)$~\cite{JRT2009MinimalLens} and $L(4n,2n-1)$~\cite{JRT2011CoveringsMinimal}, for $n\geqslant 2$.
As for layered handlebodies of genus $g>1$, since Regina does not identify higher genus
layered handlebodies as subcomplexes, there is also less we can say about their prevalence.

One can, however, define the notion of a \textit{minimal layered} triangulation to be a layered triangulation that corresponds to
the shortest path in the flip graph between two given boundary triangulations.
In genus $1$, such a path is unique, but this is not true for higher genus.

\begin{proposition}
    In the case where the boundary component to be filled has genus $1$---in which case, a filling diagram just describes
    a Dehn filling---Algorithm~\ref{algm:main-combinatorial} constructs the standard layered solid torus that realises the desired Dehn filling.
    The sequence of layerings corresponds to the unique non-backtracking path between two triangulations in the flip graph.
\end{proposition}
\begin{proof}
Layered solid tori are commonly described by the edge weights of the meridian curve in (unrooted) normal position.
In detail, LST$(a,b,c)$ describes the layered solid torus in which the unrooted meridian curve
intersects the three boundary edges in $a$, $b$ and $c$ points, respectively.
Without loss of generality, we can assume $a\leqslant b\leqslant c$.
It is well-known that $a+b=c$ (for instance,
see~\cite[Section~1.2]{Burton2003Thesis} or~\cite[Section~4.1]{JacoRubinstein2006}),
and by layering over each edge, the resulting layered solid torus is:
LST$(b,c,b+c)$ if the edge with weight $a$ was covered;
LST$(a,c,a+c)$ if the edge with weight $b$ was covered;
and LST$(a,b,b-a)$ if the edge with weight $c$ was covered.
In particular, the only layering that results in a reduction of total edge weight is the layering that covers the maximal-weight edge.

To see that the same logic holds when the meridian is rooted, we can do everything in the universal cover,
and observe that changing the position of the meridian by translations has no effect on the procedure.

Hence, our algorithm never needs to make any arbitrary choices when $g=1$,
and it will always build the layered solid torus corresponding to the unique shortest path in the flip graph.
\end{proof}

The situation for genus $2$ and higher is more complicated.
There are no longer unique paths between any pair of boundary triangulations,
and there is not a unique 2-triangulation that realises the filling petals as edges.
It would be an interesting challenge to try to improve our algorithm by analysing paths in the flip graph and
optimising the number of tetrahedra we use in the construction of handlebodies of genus $g>1$.

Beyond optimising the algorithm itself, there is also the question of how to optimally encode the input.
In particular, given a system of attaching circles, we have many choices for the ambient triangulation and for the rooted normal curves.
Which choice gives the minimum total edge weight?
This question arises naturally because our algorithm constructs a triangulation whose size scales, in the worst case, linearly with the total edge weight in the input.

It is also worth considering a related line of inquiry.
As mentioned in Section~\ref{sec:intro}, Husz\'ar and Spreer~\cite[Theorem~27]{HuszarSpreer2018WidthFull} used work
from Bell's thesis~\cite[Section~2.4]{Bell2015Thesis} to show that there is an algorithm to convert a Heegaard splitting,
written as a word $w$ encoding a composition of Dehn twists, into a layered triangulation with $O\left( g+K|w| \right)$ tetrahedra, where:
\begin{itemize}
\item
$g$ is the genus of the Heegaard splitting;
\item
$|w|$ is the length of the word $w$; and
\item
$K$ is a constant depending only on $g$ and the set of Dehn twists used to generate the mapping class group.
\end{itemize}
In this context, there is a natural question which is analogous to our previous question about edge weights:
what is the minimum length of the word $w$?
See~\cite[Appendix~C]{HuszarSpreer2018WidthFull} and the references therein for more details and discussion.

\section{Implementation}\label{sec:implementation}

\subsection{Using the algorithm}\label{sec:getting-started}
To start using our implementation, download the main Python script \texttt{heegaardbuilder.py} and the auxiliary file \texttt{heegaarderror.py}
from \url{https://github.com/AlexHe98/heegaardbuilder/},
and save these two Python scripts in the same directory.
Our implementation relies on Regina~\cite{Regina}, so Regina will need to be installed as well (if it is not already).
Here are a couple of ways in which users can run our code in an interactive Python session:
\begin{itemize}
\item
Users can import \texttt{heegaardbuilder.py} in the Python console provided in Regina's graphical user interface.
Note that for this to work, the working directory needs to be set to the directory containing \texttt{heegaardbuilder.py}.
\item
For users not on Windows, the implementation in \texttt{heegaardbuilder.py} can also
be accessed by running \texttt{regina-python} in interactive mode.
For example, if \texttt{regina-python} is in the directory \texttt{/usr/local/bin},
and assuming that \texttt{heegaardbuilder.py} is in the current directory,
then the command to run \texttt{heegaardbuilder.py} in an interactive session is:
\begin{verbatim}
/usr/local/bin/regina-python -i heegaardbuilder.py
\end{verbatim}
\end{itemize}
In such an interactive session, the general scheme for running our algorithm is as follows:

\begin{example}\label{eg:python}
To build a petal-filled triangulation using our algorithm, start by setting \texttt{tri}
to be any one-vertex 3-triangulation with a single genus-$g$ boundary component.
For example, use a layered handlebody of genus $g$ (Regina can build one for you).
Set \texttt{weights} to be an $n$-tuple, where $n$ is the total number of edges in \texttt{tri};
entries for boundary edges correspond to the weights on those edges, and entries for non-boundary edges must be 0.
Set \texttt{resolved} to be a (possibly empty) set of edge labels corresponding to pre-resolved edges.
The following sequence of commands can be used to build the petal-filled triangulation determined by the input.
\begin{verbatim}
H = HeegaardBuilder()
H.setBouquet(Triangulation3(tri), weights, resolved)
H.resolveGreedily()
H.fillHandlebody()
filled = H.triangulation()
\end{verbatim}
\end{example}

\subsection{Some remarks on our implementation}
\subsubsection{Invalid edge weights}
    Recall (from Remark~\ref{rmk:invalid-input}) that it is possible to enter
    edge weights describing a collection of curves that is invalid as a filling bouquet.
    Whenever the input does not describe a valid filling bouquet, our implementation detects this and provides a detailed error message.
    For example:
    \begin{itemize}
        \item \textit{Edge weights fail to satisfy the matching constraints in triangle X}.
        This error message arises when the input fails to satisfy the constraints in Section~\ref{sec:weights=curves}.
        \item \textit{Edges X and Y form a pair of resolved filling petals that meet transversely}.
        This error message arises when two filling petals intersect transversely at the vertex.
        \item \textit{After cutting along the filling bouquet, the boundary surface splits into X components}.
        This error message arises when the collection of attaching circles is separating.
        \item \textit{A normal curve was left over after resolving all filling petals}.
        This error message arises when the edge weights describe an unrooted curve.
    \end{itemize}

\subsubsection{Order of layering}

Whenever there is more than one reducible edge, our algorithm works regardless of which such edge we choose to layer over
(in particular, there is no need to follow the proof of Proposition~\ref{prop:PR-terminates} in considering only maximal-weight reducible edges).
Our implementation currently offers the following options for determining how this choice is made:
\begin{description}
\item[\texttt{HeegaardBuilder.resolveAllPetals()}]\hfill\par
This option essentially chooses reducible edges arbitrarily.
In the current implementation, the chosen edge is simply the first reducible edge with respect to the edge ordering inherited from the Regina triangulation.
\item[\texttt{HeegaardBuilder.resolveGreedily()}]\hfill\par
This option iterates through all reducible edges, and chooses an edge such that layering reduces the total weight as much as possible.
If there are multiple choices, the algorithm chooses one such edge arbitrarily.
\item[\texttt{HeegaardBuilder.resolveInAllWays(greedy=True)}]\hfill\par
This option branches and follows every possible choice.
If \texttt{greedy} is set to \texttt{True}, the choices are restricted to the edges that reduce the total weight the most;
otherwise, if \texttt{greedy} is set to \texttt{False}, the algorithm is allowed to layer across any reducible edge.
\end{description}

\subsubsection{Removing tetrahedra}
    Any time an edge is required to be flipped, our implementation first checks if the flip can be realised by
    removing a tetrahedron rather than layering on a new one unnecessarily.
    This corresponds to avoiding backtracking in the flip graph.
    Note that the same triangulation could be obtained by running
    the algorithm as stated (in Section~\ref{sec:main-algorithm}),
    and then simplifying the final triangulation by using 2-0 moves to eliminate any pairs of tetrahedra where backtracking occurred.

\subsection{Examples}\label{sec:examples}
To give explicit examples, we specify triangulations using \textit{isomorphism signatures}, which are strings of letters that
uniquely encode a triangulation up to \textit{combinatorial isomorphism} (i.e., up to reordering tetrahedra and/or relabelling vertices).
Isomorphism signatures were introduced by Burton;
see Section~3.2 of either~\cite{Burton2011SoCG} or~\cite{Burton2011arXiv}.
The software Regina~\cite{Regina} has
built-in functionality to convert back and forth between triangulations and isomorphism signatures.

Here we use our algorithm to construct one-vertex petal-filled triangulations corresponding to some simple genus-$2$ Heegaard splittings.
Recall that we apply our algorithm to Heegaard diagrams (in the sense of Setting~\ref{settingB}) by assuming that one side of the diagram has already been filled,
so that the first set of attaching circles corresponds to the meridians of an input handlebody $M$, and the second set corresponds to filling curves on $\partial M$.
Figure~\ref{fig:topHeegaardExamples} shows the filling curves that we use to represent
Heegaard splittings of $S^3$, $(S^2\times S^1)\# (S^2\times S^1)$ and $L(3,1)$.

\begin{figure}[htbp]
    \centering
    \includegraphics[width=\textwidth]{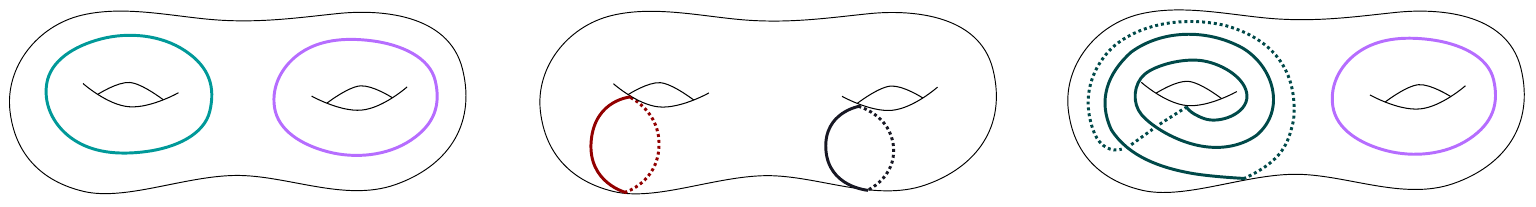}
    \caption{Topological filling diagrams for (left to right): $S^3$, $(S^2\times S^1)\# (S^2\times S^1)$, and $L(3,1)$}
    \label{fig:topHeegaardExamples}
\end{figure}

To convert the topological filling diagrams of Figure~\ref{fig:topHeegaardExamples} into combinatorial filling diagrams,
we endow the input handlebody $M$ with the triangulation given by the isomorphism signature \texttt{eHuGabdes}.
This triangulation is (up to combinatorial isomorphism) the default orientable genus-$2$ handlebody built by Regina,
and it is a minimal layered handlebody consisting of 4 tetrahedra.
We describe this triangulation in detail in Appendix~\ref{app:basic-triangulations}.

Since we are assuming that one set of attaching circles (of the Heegaard diagram) corresponds to the meridians of
the starting handlebody, we must locate the meridians of $M$ with respect to the triangulation.
This is done explicitly in Appendix~\ref{app:basic-triangulations},
and the corresponding embedding of the boundary triangulation into $\mathbb{R}^3$ is shown in Figure~\ref{fig:eHuG-for-examples}.

\begin{figure}[htbp]
    \centering
    \includegraphics[width=0.9\textwidth]{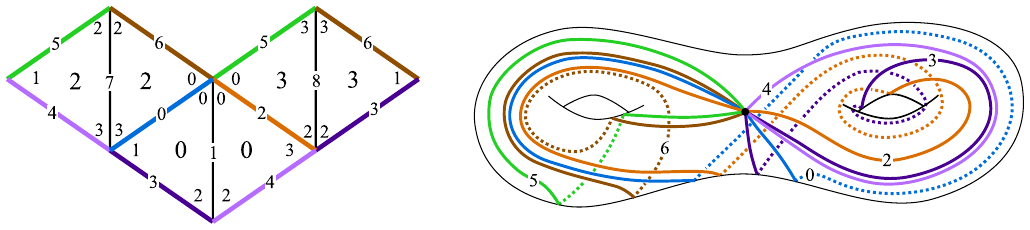}
    \caption{Left: The boundary triangulation of \texttt{eHuGabdes},
    using labels from Regina (as in Appendix~\ref{app:basic-triangulations}).
    Note that there are three types of labels:
    tetrahedron number (in the centre of each face), edge number (in the centre of each edge), and vertex number (in the corners of each face).
    Right: An embedding of this marked genus-$2$ boundary surface into $\mathbb{R}^3$, with edges 1, 7 and 8 omitted for simplicity.}
    \label{fig:eHuG-for-examples}
\end{figure}

For each topological filling diagram in Figure~\ref{fig:topHeegaardExamples}, we copy the filling curves onto the triangulation in Figure~\ref{fig:eHuG-for-examples},
and then isotope these circles so that they are rooted and compatible with the boundary triangulation.
We can then determine which edges correspond to pre-resolved attaching circles, and can also read off the edge-weight tuple.
This gives all the required data to input into our algorithm.

We set \texttt{tri} to be the genus-2 layered handlebody with isomorphism signature \texttt{eHuGabdes},
and run the sequence of commands from Example~\ref{eg:python} for each example filling diagram.

\subsubsection{A genus-\texorpdfstring{$2$}{2} splitting of \texorpdfstring{$S^3$}{S3}}
Figure~\ref{fig:S3comb} visualises a combinatorial filling diagram for $S^3$.
\begin{figure}[htbp]
    \centering
    \includegraphics[width=0.9\textwidth]{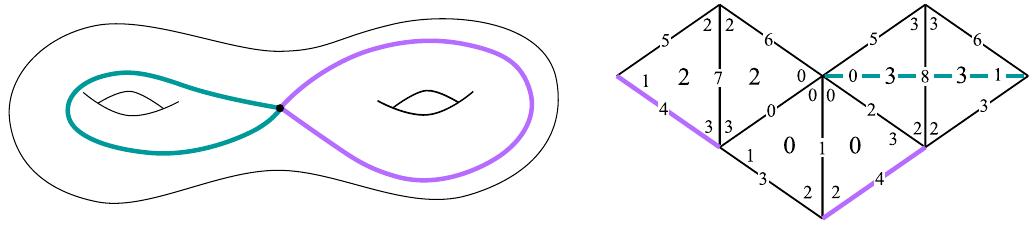}
    \caption{The rooted version of the topological filling diagram for $S^3$ (left), and how these filling curves appear with respect to the boundary triangulation (right).
    From this, we know to set \texttt{resolved=\{4\}} and \texttt{weights=(0,0,0,0,0,0,0,0,1)}.}.
    \label{fig:S3comb}
\end{figure}

Having set \texttt{weights=(0,0,0,0,0,0,0,0,1)} and \texttt{resolved=\{4\}}, we run the algorithm following the scheme from Example~\ref{eg:python}.
We then call \texttt{filled.isoSig()}, to find that the isomorphism signature is \texttt{gLLAQaddefefbpupcgo}.
To confirm that we have built a triangulation of $S^3$, we run \texttt{filled.isSphere()}, which returns \texttt{True}, as expected.
Here is the full sequence of commands to replicate this example:
\begin{verbatim}
sig = "eHuGabdes"
tri = Triangulation3(sig)
weights = (0,0,0,0,0,0,0,0,1)
resolved = {4}
H = HeegaardBuilder()
H.setBouquet(Triangulation3(tri), weights, resolved)
H.resolveGreedily()
H.fillHandlebody()
filled = H.triangulation()
filled.isoSig()
filled.isSphere()
\end{verbatim}

\subsubsection{A genus-\texorpdfstring{$2$}{2} splitting of \texorpdfstring{$(S^2\times S^1)\# (S^2\times S^1)$}{(S2 x S1) \# (S2 x S1)}}
Figure~\ref{fig:S2xS1comb} visualises a combinatorial filling diagram for $(S^2\times S^1)\# (S^2\times S^1)$.
\begin{figure}[htbp]
    \centering
    \includegraphics[width=0.9\textwidth]{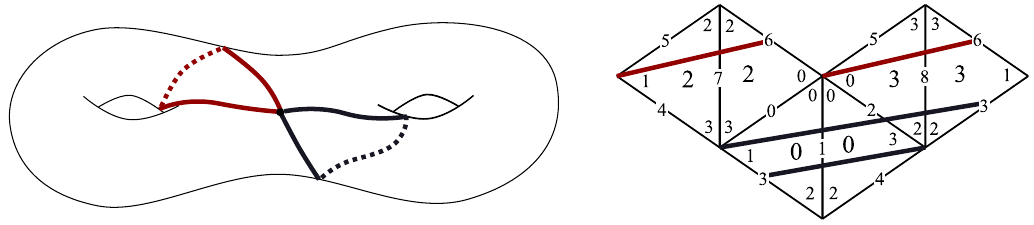}
    \caption{The rooted version of the topological filling diagram for $(S^2\times S^1)\# (S^2\times S^1)$ (left), and how these filling curves appear with respect to the boundary triangulation (right).
    From this, we know to set \texttt{resolved} to be empty and \texttt{weights=(0,2,1,1,0,0,1,1,2)}.}.
    \label{fig:S2xS1comb}
\end{figure}

Having set \texttt{weights=(0,2,1,1,0,0,1,1,2)} and \texttt{resolved} to be empty, we run the algorithm,
followed by \texttt{summands = filled.summands()}, which tells us that \texttt{filled} has two summands in its connected sum decomposition.
We can check the isomorphism signatures of the two summands using \texttt{summands[0].isoSig()} and \texttt{summands[1].isoSig()}.
Both return the isomorphism signature \texttt{cMcabbjaj}, which represents a triangulation of $S^2\times S^1$.
This confirms that \texttt{filled} is a triangulation of $(S^2\times S^1) \# (S^2 \times S^1)$.
Here is the full sequence of commands for this example:
\begin{verbatim}
sig = "eHuGabdes"
tri = Triangulation3(sig)
weights = (0,2,1,1,0,0,1,1,2)
resolved = set()
H = HeegaardBuilder()
H.setBouquet(Triangulation3(tri), weights, resolved)
H.resolveGreedily()
H.fillHandlebody()
filled = H.triangulation()
summands = filled.summands()
summands[0].isoSig()
summands[1].isoSig()
\end{verbatim}

\subsubsection{A genus-\texorpdfstring{$2$}{2} splitting of the lens space \texorpdfstring{$L(3,1)$}{L(3,1)}}
Figure~\ref{fig:lenscomb} visualises a combinatorial filling diagram for $L(3,1)$.
\begin{figure}[htbp]
    \centering
    \includegraphics[width=0.9\textwidth]{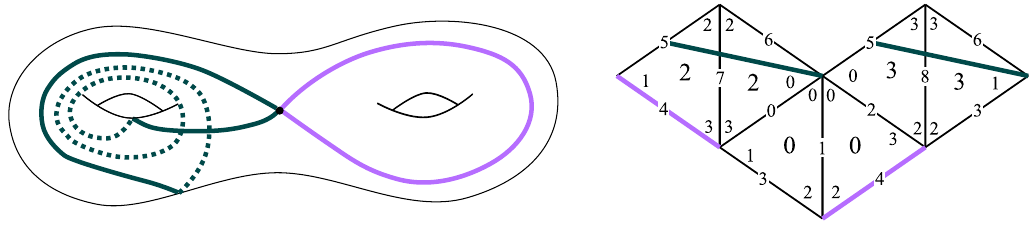}
    \caption{The rooted version of the topological filling diagram for $L(3,1)$ (left), and how these filling curves appear with respect to the boundary triangulation (right).
    From this, we know to set \texttt{resolved=\{4\}} and \texttt{weights=(0,0,0,0,0,1,0,1,1)}.}.
    \label{fig:lenscomb}
\end{figure}

Having set \texttt{weights=(0,0,0,0,0,1,0,1,1)} and \texttt{resolved=\{4\}}, we run the algorithm,
followed by \texttt{filled.isoSig()}, which returns the isomorphism signature
\[
\texttt{hLLAAkcdeeffggawadaudg}.
\]
After simplifying this filled triangulation, we can compare it against Regina's census of $3$-manifold triangulations to see that the filled manifold is the lens space $L(3,1)$.
Here is the full sequence of commands for this example:\footnote{%
This example uses the \texttt{Triangulation3.intelligentSimplify()} command.
In version~7.4 of Regina, this command will be renamed to \texttt{Triangulation3.simplify()};
the old name will remain only as a deprecated alias in Regina~7.4, and future versions of Regina will likely remove the old name entirely.}
\begin{verbatim}
sig = "eHuGabdes"
tri = Triangulation3(sig)
weights = (0,0,0,0,0,1,0,1,1)
resolved = {4}
H = HeegaardBuilder()
H.setBouquet(Triangulation3(tri), weights, resolved)
H.resolveGreedily()
H.fillHandlebody()
filled = H.triangulation()
filled.isoSig()
filled.intelligentSimplify()
Census.lookup(filled)
\end{verbatim}

\section{Experimentation}\label{sec:experimentation}
In this section we provide an overview of some 3-manifolds constructed using
Algorithm~\ref{algm:main-combinatorial} with layered handlebodies of genus 2 and 3 as inputs.
We restrict our focus to prime manifolds resulting from our construction.
We resolve the curves using \texttt{HeegaardBuilder.resolveAllPetals()}.

\subsection{Genus 2}
For the genus $2$ experiments, we used the layered triangulation of a genus-$2$ handlebody with isomorphism signature \texttt{eHuGabdes}.
This contains 9 edges, all of which are boundary edges.

Using this handlebody as input for our algorithm, we generated petal-filled triangulations of closed orientable
3-manifolds by enumerating all possible filling bouquets with total weights between 2 and 40.
We performed this enumeration in order of increasing total weight, generating weight vectors of the same total weight in lexicographical order.
We built a total of \num{115237} petal-filled triangulations of prime 3-manifolds in this way.

Note that there is no reason to expect uniqueness of the manifolds we generated.
For example, the filling bouquets described by weights \texttt{(0,0,0,0,1,0,1,0,0)} and
\texttt{(0,0,1,0,0,1,1,0,0)} both generated triangulations of $S^2\times S^1$.
After simplifying the \num{115237} petal-filled triangulations that we obtained, and removing duplicate triangulations (up to combinatorial isomorphism),
we were left with \num{38855} different simplified triangulations, \num{7804} of which were successfully identified as distinct minimal triangulations in
the Regina closed orientable 3-manifold census~\cite{Burton2011ISSAC} and Hodgson-Weeks closed hyperbolic census~\cite{HodgsonWeeks1994}.
Amongst these \num{7804} triangulations, we saw \num{3006} different census manifolds.

From the \num{3006} census manifolds, we identified 509 lens spaces.
In particular, we conclude that for the other \num{2497} census manifolds that we found, the Heegaard genus is equal to 2.
For the \num{31051} unidentified triangulations, the most we can say is that they have Heegaard genus at most 2.

Delving a little deeper into the topology of the triangulations that we found,
we identified \num{2108} triangulations of \textit{hyperbolic} 3-manifolds using
the Regina closed census and Hodgson-Weeks closed hyperbolic census, corresponding to 303 unique census manifolds.
The first such manifold that we saw was generated by the filling bouquet with
edge weights \texttt{(2,1,0,3,2,3,1,1,1)} (total weight 14) and no pre-resolved edges;
we can recreate the filling bouquet on the boundary triangulation of \texttt{eHuGabdes} using these edge weights, as shown in Figure~\ref{fig:hypexample}.
The isomorphism signature of the resulting petal-filled triangulation was
\texttt{kLvAAPPkcefehijijjuxdxasalalw}.
After simplifying this triangulation, we identified the manifold as \texttt{Hyp\_0.94270736} from the Regina closed census.

\begin{figure}[htbp]
    \centering
    \includegraphics[width=0.9\textwidth]{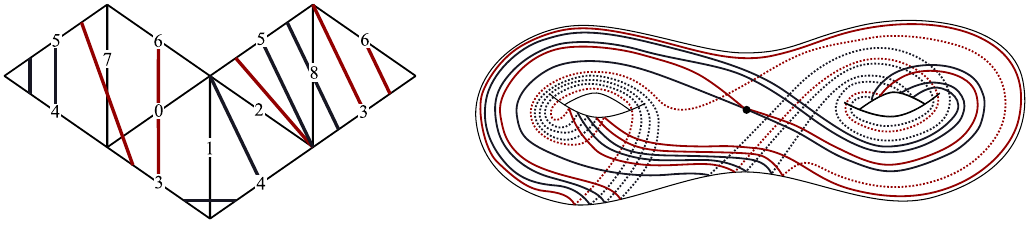}
    \caption{Left: Curves reconstructed from the edge weights $(2,1,0,3,2,3,1,1,1)$.
    Right: How these curves appear in our $\mathbb{R}^3$ embedding of \texttt{eHuGabdes} (recall Figure~\ref{fig:eHuG-for-examples}).}
    \label{fig:hypexample}
\end{figure}

\subsection{Genus 3}
For the genus 3 experiments we  started with the triangulation of a genus-$3$ handlebody given by the isomorphism signature \texttt{hHbLbqiabegeti};
this is a layered triangulation, and its boundary contains 15 edges.

By generating filling bouquets with total edge weight between 3 and 20, we constructed a total of \num{25343} petal-filled triangulations of prime 3-manifolds.
After simplifying these petal-filled triangulations and removing duplicates, we were left with \num{6929} different simplified triangulations.
We successfully identified \num{5428} of these as distinct minimal triangulations in
the Regina closed orientable census and the Hodgson-Weeks census, corresponding to \num{2222} unique census manifolds.

From these \num{2222} census manifolds, we observed 679 manifolds not seen in our genus 2 experiments.
Of these, 83 were identified as lens spaces.
Although we cannot rule out the existence of a genus 2 splitting for the remaining 596 manifolds, we can conclude that they have Heegaard genus either 2 or 3.
In terms of hyperbolic 3-manifolds, we identified \num{1093} triangulations of such manifolds using
the Regina closed census and Hodgson-Weeks closed hyperbolic census, corresponding to 151 unique census manifolds.

\subsection{Cutwidth and treewidth observations}
We make some brief observations about the cutwidth (recall Definition~\ref{defn:cutwidth}) and treewidth.

\begin{definition}[Treewidth]
    Let $G=(V,E)$ be a multigraph. A \textit{tree decomposition} of $G$ is a pair $\big(\{B_i,\; i\in I\},\; (I, F)\big)$
    consisting of \textit{bags} $B_i\subseteq V$, $i\in I$, and a tree $T=(I,F)$ such that the following hold:
    \begin{enumerate}[label=(\roman*)]
        \item Each vertex $v\in V$ is contained in at least one bag.
        That is, $\cup_i B_i = V$.
        \item For each edge $\{v_1,v_2\}\in E$, there is some $i\in I$ such that $\{v_1,v_2\}\subseteq B_i$.
        \item For each $v\in V$, the bags containing $v$ span a connected subtree of $T$.
        That is, ${\{i\in I \colon v\in B_i\}}$ spans a connected subtree of $T$.
    \end{enumerate}
    The \textit{width} of a tree decomposition is $\max_{i\in I}|B_i|-1$. The \textit{treewidth} of a graph is the minimum width over all tree decompositions. The treewidth of a triangulation is the treewidth of its dual graph.
\end{definition}

We used Sage~\cite{sagemath} to compute the cutwidth and treewidth of the triangulations that we found in the experiments discussed above.
The caveat is that the dual graphs of our triangulations are, in general, multigraphs.
Whilst this poses no problems for computing treewidth, it appears that the cutwidth algorithms present in Sage do not count parallel edges;
to the best of the authors' knowledge, there are no algorithms readily available that remedy this.
Thus, unless we have a triangulation whose dual graph happens to be a simple graph,
the cutwidth values computed by Sage are only lower bounds rather than exact values.

We partly circumvented this issue by restricting our attention to the manifolds for which we constructed
petal-filled triangulations with cutwidth bounded below by $4g-2$, as calculated by Sage;
in these cases, we know that this is the exact value of the cutwidth, since it coincides with the upper bound from Theorem~\ref{thm:bounded-cutwidth}.
For these petal-filled triangulations, we then used Sage to obtain a lower bound on the cutwidth of the corresponding simplified triangulation;
for the cases where this lower bound was greater than the cutwidth of the original petal-filled triangulation,
we had verification that the cutwidth had increased after simplification.

From the \num{38855} different simplified triangulations constructed from the genus-$2$ handlebody,
we found that \num{34419} of the corresponding petal-filled triangulations had cutwidth equal to $6$.
We were able to verify that 874 of these petal-filled triangulations had cutwidth increasing after simplification, with the largest verified increase being 2.
More specifically, 254 of these 874 triangulations were verified to have cutwidth increasing by (at least) 2 after simplification;
59 were identified as closed hyperbolic manifolds appearing in the censuses, whilst the remaining 195 could not be identified via census lookup.
The first of these 254 triangulations was identified as \texttt{Hyp\_1.01494161:\#8} in the Regina closed orientable census.
This was generated from the weights \texttt{(2,0,2,4,1,3,3,1,1)}, with no pre-resolved edges.
For this particular example, the dual graph of the simplified triangulation happened to be a simple graph, so the cutwidth is actually equal to 8.

From the \num{6929} different simplified triangulations constructed from the genus-3 handlebody,
Sage calculated all cutwidths of the corresponding petal-filled triangulations to be less than $10$.
As explained above, these calculated values are not guaranteed to be exact, so we did not study the cutwidth of these examples in greater detail.

We now consider treewidth.
For triangulations generated from the genus-$2$ handlebody, we observed \num{1455} instances of
the treewidth increasing after simplifying the petal-filled triangulation.
In particular, we found 5 triangulations where the treewidth increased by 2 after simplification,
though we were unable to find the simplified triangulations in the censuses.
Of these, 4 triangulations were found to have the cutwidth increase by at least 2 after simplification, with the other having cutwidth increase by at least 1.

For triangulations generated from the genus-$3$ handlebody, we observed 0 instances of treewidth increasing after simplifying the petal-filled triangulation.

\appendix
\section{Appendix. A genus-\texorpdfstring{$2$}{2} layered handlebody: \texttt{eHuGabdes}}\label{app:basic-triangulations}

Here we study the 4-tetrahedron triangulation of a genus-$2$ layered handlebody described by the isomorphism signature \texttt{eHuGabdes}. The gluing information is given in Figure~\ref{fig:Regina-eHuGabdes}.

\begin{figure}[h]
\centering\small
  \begin{tabular}{c|c|c|c|c}
    Tetrahedron & Face 012 & Face 013 & Face 023 & Face 123 \\
    \hline
    0 &   --   & 1(023) &   --   & 1(132) \\
    1 & 2(013) & 3(012) & 0(013) & 0(132) \\
    2 & 3(301) & 1(012) &   --   &   --   \\
    3 & 1(013) & 2(120) &   --   &   --   \\
  \end{tabular}
  \caption{Triangulation of a minimal (4 tetrahedron) genus-$2$ handlebody built in Regina from the isomorphism signature \texttt{eHuGabdes}. Note that the labelling has been `oriented' so that the order of vertex labels is consistent between tetrahedra.}
 \label{fig:Regina-eHuGabdes}
\end{figure}

The unglued faces that form the boundary are: 0(012), 0(023), 2(023), 2(123), 3(023) and 3(123). Figure~\ref{fig:eHuG-bdry} shows tetrahedra 0, 2 and 3, viewed from these six faces. Note that the edges are colour-coded/decorated according to edge classes, which can be easily read off from the `Skeletal Components' tab in the Regina GUI.

\begin{figure}[h]
    \centering
\includegraphics[width=0.9\textwidth]{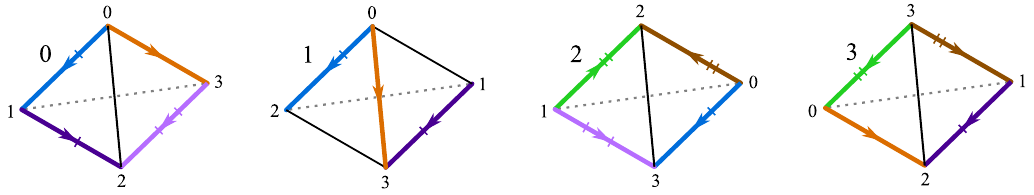}
    \caption{The four tetrahedra with labels corresponding to Figure~\ref{fig:Regina-eHuGabdes}. The coloured edges are those that help us reconstruct the triangulation in Section~\ref{sec:eHuG-layers}.}
    \label{fig:eHuG-bdry}
\end{figure}

\subsection{Tracing layers from the outside inwards}\label{sec:eHuG-layers}
Here we visualise the layering in the \texttt{eHuGabdes} triangulation in such a way that the folding of the innermost layer can be demonstrated (in Section~\ref{sec:eHuG-curves}).

\small
\begin{center}
    \begin{tabular}{m{0.03\textwidth} m{0.5\textwidth} m{0.37\textwidth}}
         (a) & \includegraphics[width=0.4\textwidth]{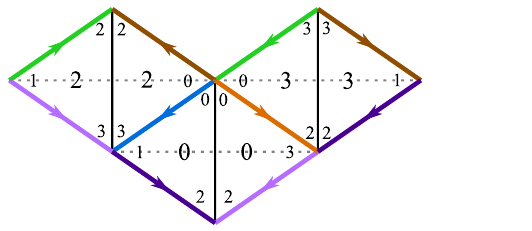} & Identifying the orange edges 0(03) and 3(02) and the blue edges 0(01) and 2(03), we can visualise the boundary triangulation as an octagon.
    \end{tabular}
    \begin{tabular}{m{0.03\textwidth} m{0.5\textwidth} m{0.37\textwidth}}
         (b) & \includegraphics[width=0.4\textwidth]{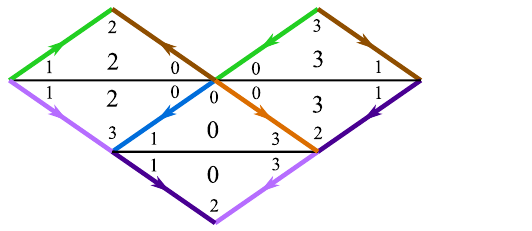} & Considering the back faces of these three tetrahedra, we visualise the next layer in.
    \end{tabular}
    \begin{tabular}{m{0.03\textwidth} m{0.5\textwidth} m{0.37\textwidth}}
         (c) & \includegraphics[width=0.4\textwidth]{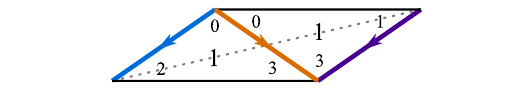} & We may then glue tetrahedron 1 from below, to the faces 0(013) and 3(012).
    \end{tabular}
    \begin{tabular}{m{0.03\textwidth} m{0.5\textwidth} m{0.37\textwidth}}
        (d) & \includegraphics[width=0.4\textwidth]{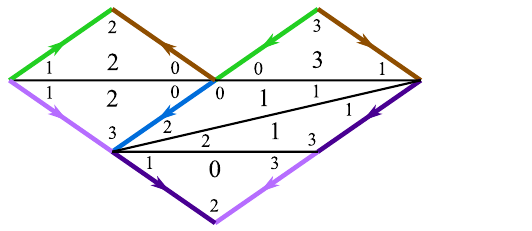} & The innermost layer then consists of the two back faces of tetrahedron 1, the two back faces of tetrahedron 2, and faces 0(123) and 3(013).
    \end{tabular}
    \begin{tabular}{m{0.03\textwidth} m{0.5\textwidth} m{0.37\textwidth}}
         (e) & \includegraphics[width=0.4\textwidth]{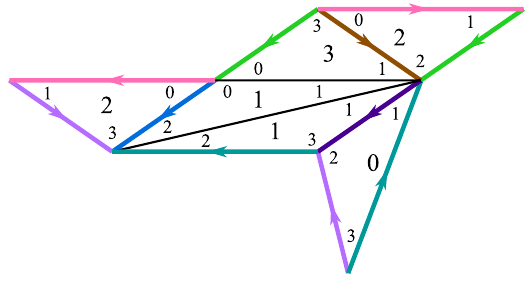} & Here we choose to view the boundary triangulation differently, by shifting which edges make up the boundary of the octagon. More specifically, cut along edges 2(01) and 0(13) and reglue along edges 2(02) and 0(12), respectively.
    \end{tabular}
    \begin{tabular}{m{0.03\textwidth} m{0.5\textwidth} m{0.37\textwidth}}
         (f) & \includegraphics[width=0.4\textwidth]{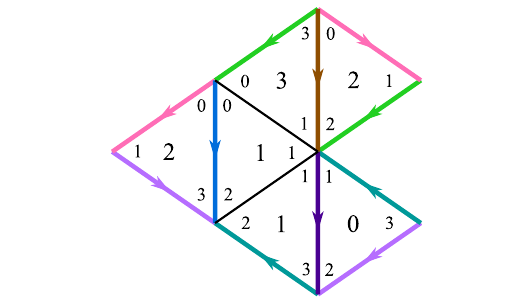} & Finally, we just isotope the previous triangulation into a cleaner position. Note that the adjusted view of the innermost layer was chosen because it makes it clear how the final three face pairings are obtained via folds (see Section~\ref{sec:eHuG-curves}).
    \end{tabular}
\end{center}

\subsection{Identifying meridian curves from folds}\label{sec:eHuG-curves}
Here we use the folds to identify the curves corresponding to meridians of the handles in the \texttt{eHuGabdes} triangulation, and trace back through the layering to visualise them on the boundary.

\small
\begin{center}
    \begin{tabular}{m{0.03\textwidth} m{0.5\textwidth} m{0.37\textwidth}}
         (a) & \includegraphics[width=0.4\textwidth]{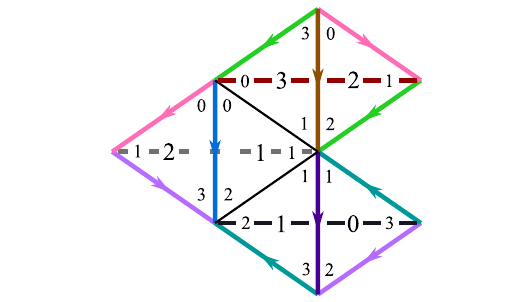} & The three folds are made across the three vertical edges. That is, 2(012) is identified to 3(301), 1(012) is identified to 2(013), and 0(123) is identified to 1(132). The curves that are made homotopically trivial by these folds are indicated by the dashed lines.
    \end{tabular}
    \begin{tabular}{m{0.03\textwidth} m{0.5\textwidth} m{0.37\textwidth}}
         (b) & \includegraphics[width=0.4\textwidth]{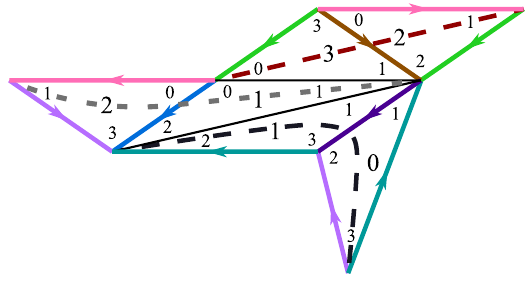} & We then retrace the steps in Section~\ref{sec:eHuG-layers}, keeping track of these curves. Observe that the middle dashed line separates the two handles of the genus-$2$ boundary, meaning this is not one of the meridians. We therefore ignore it from here on.
    \end{tabular}
    \begin{tabular}{m{0.03\textwidth} m{0.5\textwidth} m{0.37\textwidth}}
         (c) & \includegraphics[width=0.4\textwidth]{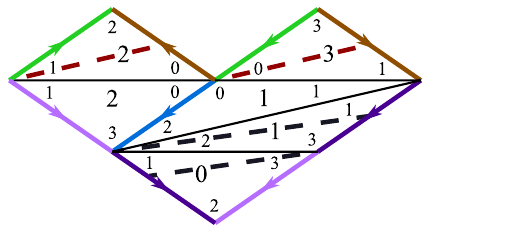} & How the red and blue filling petals appear with respect to part (d) in Section~\ref{sec:eHuG-layers}.
    \end{tabular}
    \begin{tabular}{m{0.03\textwidth} m{0.5\textwidth} m{0.37\textwidth}}
         (d) & \includegraphics[width=0.4\textwidth]{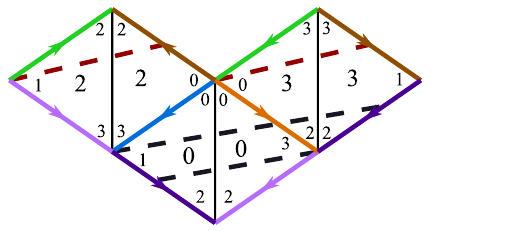} & How the red and blue filling petals appear with respect to the boundary triangulation.
    \end{tabular}
\end{center}

\begin{figure}[h!]
    \centering
    \includegraphics[width=0.8\textwidth]{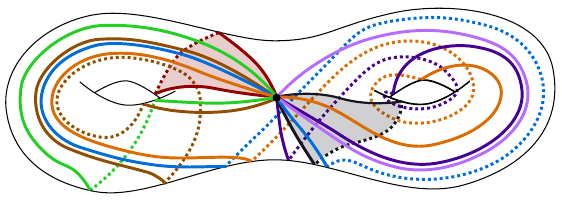}
    \caption{An embedding of the \texttt{eHuGabdes} triangulation into $\mathbb{R}^3$. To avoid unnecessary complication we have omitted the uncoloured edges of the final triangulation (d) in Section~\ref{sec:eHuG-curves}. The red and blue shading indicates the locations of the compression discs corresponding to each of the two handles.}
    \label{fig:eHuG-embedding}
\end{figure}

\bibliography{handlebodies}

\end{document}